\documentclass[leqno,10pt,a4paper]{amsart}
%-------------------------------------------------------------------------------
\usepackage[margin=2.5cm]{geometry}
\usepackage{cite}
\usepackage{graphicx}
\usepackage{amscd}
\usepackage{amssymb}
\usepackage[usenames,dvipsnames]{color}
\usepackage{hyperref}
%%%%%%%%%%%%%%%%%%
\usepackage{xcolor}
\definecolor{c1}{RGB}{10,100,155}
\definecolor{c2}{RGB}{50,135,90}
\hypersetup{
pagebackref=true,
colorlinks=true,
linktocpage=true,
linkbordercolor= c2,
citecolor=c1, 
linkcolor=c2,
urlcolor=gray
}
%%%%%%%%%%%%
\usepackage{mleftright}
\usepackage{booktabs}
\usepackage[shortlabels]{enumitem}
\usepackage{mathtools}
\usepackage{subcaption}
\usepackage{threeparttable}
\usepackage{alphalph}
\usepackage{comment}
\usepackage[colorinlistoftodos,bordercolor=gray,backgroundcolor=white,linecolor=black,textsize=scriptsize]{todonotes}
%-------------------------------------------------------------------------------

\setlist[enumerate]{labelsep=*, leftmargin=1.5pc}
\setlist[enumerate]{label=\normalfont(\roman*), ref=\roman*}
\captionsetup{subrefformat=parens}
\DeclareCaptionSubType*[arabic]{figure}

%-------------------------------------------------------------------------------
\newtheorem{theo}{Theorem}[section]

\newtheorem{prop}[theo]{Proposition}
\newtheorem{coro}[theo]{Corollary}

%-------------------------------------------------------------------------------
\theoremstyle{definition}
\newtheorem{dfn}[theo]{Definition}
\newtheorem{ex}[theo]{Example}
\newtheorem{rem}[theo]{Remark}
%-------------------------------------------------------------------------------
\newtheoremstyle{noparens}%
  {}{}%
  {\itshape}{}%
  {\bfseries}{.}%
  { }%
  {\thmname{#1}\thmnumber{ #2}\thmnote{ #3}}
\theoremstyle{noparens}
\newtheorem{exnopar}[theo]{Example}
\newtheorem{propnopar}[theo]{Proposition}
%-------------------------------------------------------------------------------
\renewcommand{\emptyset}{\varnothing}
\DeclareMathOperator{\vol}{vol}
\DeclareMathOperator{\conv}{conv}
\DeclareMathOperator{\Hom}{Hom}
\newcommand*{\defeq}{\coloneqq}
\newcommand{\CC}{\mathbb{C}}
\newcommand{\PP}{\mathbb{P}}
\newcommand{\ZZ}{\mathbb{Z}}
\newcommand{\QQ}{\mathbb{Q}}
\newcommand{\RR}{\mathbb{R}}
\newcommand{\T}{\mathbb{T}}
\newcommand{\NQ}{N_\QQ}
\newcommand{\MQ}{M_\QQ}
\DeclareMathOperator{\supp}{supp}
\DeclareMathOperator{\refl}{ref}
\DeclareMathOperator{\FI}{FI}
\DeclareMathOperator{\can}{can}
\DeclareMathOperator{\ord}{ord}
\DeclareMathOperator{\vertex}{vert}
\newcommand{\modb}[1]{\left(\mathrm{mod}\ {#1}\right)}
\newcommand{\calh}{\mathcal{H}}
\newcommand{\mylabel}[2]{#2\def\@currentlabel{#2}\label{#1}}
\newcommand{\canonicalFano}{$674,\!688$}
%-------------------------------------------------------------------------------

%-------------------------------------------------------------------------------
\graphicspath{{images/}}
\begin{document}
%-------------------------------------------------------------------------------
\author[V.\,Batyrev]{Victor Batyrev}
\address{Mathematisches Institut, Universit\"at T\"ubingen, Auf der Morgenstelle 10, 72076 T\"ubingen, Germany}
\email{victor.batyrev@uni-tuebingen.de}

\author[A.\,M.\,Kasprzyk]{Alexander Kasprzyk}
\address{School of Mathematical Sciences, University of Nottingham, Nottingham, NG7~2RD, UK}
\email{a.m.kasprzyk@nottingham.ac.uk}

\author[K.\,Schaller]{Karin Schaller}
\address{Mathematisches Institut, Freie Universit\"at Berlin, Arnimallee 3, 14195 Berlin, Germany}
\email{karin.schaller@fu-berlin.de}

%-------------------------------------------------------------------------------
\title{On the Fine interior of three-dimensional canonical~Fano~polytopes}
%-------------------------------------------------------------------------------
\begin{abstract}
The Fine interior $\Delta^{\FI}$ of a $d$-dimensional lattice polytope $\Delta$ is a rational subpolytope of $\Delta$ which is important for 
constructing minimal birational models of non-degenerate hypersurfaces defined by Laurent polynomials with Newton polytope $\Delta$. 
This paper presents some computational results on the Fine interior of all~\canonicalFano\ three-dimensional canonical Fano polytopes.
\end{abstract}
%-------------------------------------------------------------------------------
\maketitle
%-------------------------------------------------------------------------------
\section{Introduction}

Let $M \cong \ZZ^d$ be a free abelian group of rank $d$.
We set $\MQ \defeq M \otimes \QQ$
and denote by $N$ the dual group $\Hom(M, \ZZ)$ in the dual
$\QQ$-linear vector space
$\NQ\defeq \Hom(M, \QQ)$. Let $\langle \cdot, \cdot \rangle:
 \MQ \times \NQ \to \QQ$ be the natural
pairing.

A convex compact $d$-dimensional polytope
$\Delta \subseteq \MQ$ is called {\em lattice $d$-tope} if all vertices
of $\Delta$ belong to the lattice $M \subseteq \MQ$, \emph{i.e.}, $\Delta$ equals
the convex hull $\conv(\Delta \cap M)$ of all lattice points in
$\Delta$.
The usual interior $\Delta^{\circ}$ of $\Delta$ is
the complement $\Delta \setminus \partial \Delta$, where
$\partial \Delta$ is the boundary of $\Delta$.
Another interior of a lattice
polytope $\Delta$ was introduced by J. Fine~\cite{Fine83,Rei87,Ish99,Bat18}:

\begin{dfn}
Let $\Delta \subseteq \MQ$ be a lattice $d$-tope. Denote by
${\ord}_\Delta$ the piecewise linear function $\NQ \to \QQ$ with
\[ {\ord}_\Delta (y) \defeq \min_{ x \in \Delta} \langle x, y \rangle \;\; (y \in \NQ).\]
Then the convex subset
\[ \Delta^{\FI}\defeq \bigcap_{n \in N \setminus \{0 \}} \{ x \in \MQ \, \mid \,
\langle x, n \rangle \geq {\ord}_\Delta(n) + 1 \} \]
is called {\em Fine interior} of $\Delta$.
\end{dfn}

One can show that only finitely many linear inequalities
$\langle x, n \rangle \geq {\ord}_\Delta(n) + 1$ are necessary to define
$\Delta^{\FI}$. Therefore, $\Delta^{\FI}$ is a convex hull of finitely many
rational points $p \in \MQ$. Moreover, any
lattice point $p \in \Delta^\circ \cap M$ in the usual interior of $\Delta$
is contained in $\Delta^{\FI}$. Therefore,
$\Delta^{\FI}$ contains the convex hull of $\Delta \cap M$, \emph{i.e.},
we get the inclusion $\conv(\Delta^{\circ} \cap M)
\subseteq \Delta^{\FI}$. In particular,
$\Delta^{\FI}$ is non-empty if $\Delta^{\circ} \cap M$ is non-empty. Moreover,
for any lattice polytope $\Delta$ of dimension $d \leq 2$
one has the equality $\conv(\Delta^{\circ} \cap M) = \Delta^{\FI}$~\cite{Bat18}. The Fine interior $\Delta^{\FI}$ of a lattice polytope $\Delta$
of dimension $d \geq 3$ may happen
to be strictly larger than the convex hull
$\conv(\Delta^{\circ} \cap M)$. The simplest famous example of such a situation
is due to M. Reid. Other similar examples based on hollow $3$-topes
can be found in Appendix \ref{appendixB}:

\begin{exnopar}[{\cite[Example 4.15]{Rei87}}] \label{G5} \emph{
Let $M \subseteq \QQ^4$ be $3$-dimensional affine lattice defined by
 \[ M \defeq \{ ( m_1,m_2, m_3,m_4 ) \in \ZZ^4 \, \mid \, \sum_{i=1}^4 m_i = 5, \;
\sum_{i=1}^4 i m_i \equiv 0\ \modb{5}\}.\]
Consider the $M$-lattice $3$-tope $\Delta \subseteq \MQ$ defined as
the convex hull of $4$ lattice points
\[ (5,0,0,0), \;(0,5,0,0), \;(0,0,5,0), \text{ and}\; (0,0,0,5) \in M. \]
Then $\conv(\Delta^{\circ} \cap M) = \emptyset$, but
$\Delta^{\FI}$ is the $3$-dimensional $M$-rational simplex
\[ \conv((2,1,1,1), (1,2,1,1), (1,1,2,1), (1,1,1,2)) \]
and $\Delta^{\FI} \cap M$ is empty.}
\end{exnopar}

In this paper, we are interested in
 lattice $d$-topes $\Delta \subseteq \MQ$ obtained as Newton polytopes
of Laurent polynomials $f_\Delta$ in $d$ variables $x_1, \ldots, x_d$, \emph{i.e.},
\[ f_\Delta({\bf x}) = \sum_{m \in \Delta \cap M} a_m {\bf x}^m, \]
where $a_m \in \CC$ are sufficiently general complex numbers. The importance
of Fine interior is explained by the following theorem~\cite{Rei87,Ish99,Bat18}:

\begin{theo} \label{mintheo}
Let $\mathcal Z_\Delta \subseteq \T^d$ be a non-degenerate affine hypersurface
 in the $d$-dimensional algebraic torus $\T^d$
 defined
by a Laurent polynomial $f_\Delta$ with Newton $d$-tope $\Delta$.
Then the following conditions are equivalent:
\begin{enumerate}[label=(\roman*)]
\item[\emph{(i)}] a smooth projective compactification $\mathcal V_\Delta$ of $\mathcal Z_\Delta$
has non-negative Kodaira dimension, \emph{i.e.}, $\kappa \geq 0$;
\item[\emph{(ii)}] $\mathcal Z_\Delta$ is birational to a minimal model
${\mathcal S}_\Delta$ with abundance;
\item[\emph{(iii)}] the Fine interior $\Delta^{\FI}$ of $\Delta$ is non-empty.
\end{enumerate}
\end{theo}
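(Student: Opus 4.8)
The plan is to translate each of the three conditions into the combinatorics of $\Delta$ by passing to a toric compactification. Since the Kodaira dimension is a birational invariant, I would fix a smooth projective fan $\Sigma$ refining the normal fan of $\Delta$, let $X_\Sigma$ be the associated toric variety, and --- using that $f_\Delta$ is non-degenerate --- take as the model $\mathcal V_\Delta$ the closure $\overline{\mathcal Z}_\Delta$ of $\mathcal Z_\Delta$ in $X_\Sigma$, which is smooth for a suitable such $\Sigma$. Write $Z$ for the nef Cartier divisor on $X_\Sigma$ with polytope $\Delta$ (so that $\overline{\mathcal Z}_\Delta \in |Z|$) and $K_{X_\Sigma} = -\sum_\rho D_\rho$, the sum over the rays $\rho$ of $\Sigma$ with primitive generators $n_\rho \in N$; then adjunction gives $mK_{\mathcal V_\Delta} = m(K_{X_\Sigma}+Z)\big|_{\mathcal V_\Delta}$ for every $m \geq 1$.

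First I would prove that \textup{(i)} and \textup{(iii)} are equivalent. The crucial input, obtained by combining adjunction on $X_\Sigma$ with the residue long exact sequence and a suitable toric vanishing theorem (as in the cohomological analysis of non-degenerate hypersurfaces underlying \cite{Bat18,Ish99}), is that $H^0(\mathcal V_\Delta, mK_{\mathcal V_\Delta})$ is spanned by the Laurent monomials $\mathbf x^p$, $p \in M$, satisfying $\langle p, n_\rho \rangle \geq m\,(\ord_\Delta(n_\rho)+1)$ for every ray $\rho$ of $\Sigma$ --- that is, by the lattice points of the dilate $m\,P_\Sigma$, where
\[ P_\Sigma \defeq \bigcap_{\rho} \{\, x \in \MQ : \langle x, n_\rho \rangle \geq \ord_\Delta(n_\rho)+1 \,\}. \]
Since $\Delta^{\FI}$ is cut out by only finitely many of the inequalities $\langle x, n \rangle \geq \ord_\Delta(n)+1$ ($n \in N \setminus \{0\}$), after refining $\Sigma$ so that those finitely many lattice normals occur among its rays we have $P_\Sigma = \Delta^{\FI}$. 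Hence $\kappa(\mathcal V_\Delta) \geq 0$ if and only if $H^0(\mathcal V_\Delta, mK_{\mathcal V_\Delta}) \neq 0$ for some $m > 0$, if and only if $m\,\Delta^{\FI} \cap M \neq \emptyset$ for some $m > 0$; and because $\Delta^{\FI}$ is a rational polytope, this is equivalent to $\Delta^{\FI} \neq \emptyset$ (clear the denominators of a vertex).

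It remains to bring in \textup{(ii)}. The implication \textup{(ii)} $\Rightarrow$ \textup{(i)} is immediate: on a minimal model with abundance, $K$ is nef and $\kappa = \nu(K) \geq 0$. For \textup{(iii)} $\Rightarrow$ \textup{(ii)} I would carry out the explicit construction of \cite{Bat18}: starting from $\Delta^{\FI}$, build a projective fan $\Sigma^{\mathrm{can}}$ whose support data encode $\ord_\Delta + 1$, take $\mathcal S_\Delta$ to be the closure of $\mathcal Z_\Delta$ in $X_{\Sigma^{\mathrm{can}}}$ after a suitable partial crepant subdivision, verify from the defining inequalities of $\Delta^{\FI}$ that $\mathcal S_\Delta$ has at worst canonical singularities, and check that $K_{\mathcal S_\Delta}$ is nef --- each inequality $\langle x, n_\rho \rangle \geq \ord_\Delta(n_\rho)+1$ is exactly what forces the corresponding discrepancy coefficient to be non-negative on $\mathcal S_\Delta$. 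Abundance then reduces to semi-ampleness of $K_{\mathcal S_\Delta}$: the system $|mK_{\mathcal S_\Delta}|$ is governed by $m\,\Delta^{\FI} \cap M$ as above and contracts precisely the torus-invariant curves on which $K_{\mathcal S_\Delta}$ is trivial, producing the Iitaka fibration onto a canonical model.

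The main obstacle is this last point: establishing abundance, i.e.\ genuine semi-ampleness of the combinatorially nef divisor $K_{\mathcal S_\Delta}$, in arbitrary dimension. When $\dim \mathcal Z_\Delta \leq 3$ it follows as well from the abundance theorem, but in general it is the delicate part, handled through the finer, iterated Fine-interior analysis of \cite{Bat18,Ish99}; the toric vanishing theorem feeding the computation of $H^0(\mathcal V_\Delta, mK_{\mathcal V_\Delta})$ is the other technical ingredient, though it is by now routine for non-degenerate hypersurfaces.
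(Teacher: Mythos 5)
First, note that the paper itself does not prove Theorem \ref{mintheo}: it is quoted as known, with the proof delegated to \cite{Rei87,Ish99,Bat18}. Your outline follows the same route those sources take (toric compactification adapted to the normal fan, adjunction $K_{\mathcal V_\Delta}=(K_{X_\Sigma}+Z)|_{\mathcal V_\Delta}$, identification of pluricanonical sections with monomials, and the Fine interior as the section polytope of the adjoint divisor), so in spirit you are reconstructing the cited argument rather than giving a new one. The easy parts are handled correctly: a lattice point $p\in m\Delta^{\FI}\cap M$ gives a monomial section of $m(K_{X_\Sigma}+Z)$ whose zero divisor is torus-invariant, hence does not contain $\overline{\mathcal Z}_\Delta$, so (iii)$\Rightarrow$(i) is fine; (ii)$\Rightarrow$(i) is immediate; and your reduction $P_\Sigma=\Delta^{\FI}$ after refining $\Sigma$ is legitimate (though you should also say why finitely many normals witness the case $\Delta^{\FI}=\emptyset$ --- a compactness argument inside $\Delta$).

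The genuine gap is the central claim feeding (i)$\Rightarrow$(iii): that $H^0(\mathcal V_\Delta,mK_{\mathcal V_\Delta})$ is spanned by the monomials of $mP_\Sigma\cap M$, which you propose to get from adjunction, the residue sequence and ``a suitable toric vanishing theorem'', calling this routine. It is not. The natural mechanism would be surjectivity of the restriction $H^0(X_\Sigma,m(K_{X_\Sigma}+Z))\to H^0(\overline{\mathcal Z}_\Delta,mK)$, whose obstruction lies in $H^1\bigl(X_\Sigma,K_{X_\Sigma}+(m-1)(K_{X_\Sigma}+Z)\bigr)$; toric Kawamata--Viehweg type vanishing needs $(m-1)(K_{X_\Sigma}+Z)$ nef, and for a general smooth refinement $\Sigma$ the adjoint divisor is not nef --- arranging its nefness/semiampleness is exactly the content of the minimal-model construction of \cite{Ish99,Bat18}, not an available input at this stage. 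Moreover, what you actually need for (i)$\Rightarrow$(iii) is the upper bound (every pluricanonical section lies in the monomial span, so $P_m\le|mP_\Sigma\cap M|$); in the references this is obtained either by a valuative computation of the order of a pluricanonical form along the toric divisorial valuations attached to all primitive $n\in N$ --- which is precisely where the infinitely many inequalities defining $\Delta^{\FI}$ enter --- or by running the toric MMP for the pair, and it does not follow formally from adjunction plus vanishing. Together with the fact that (iii)$\Rightarrow$(ii), including abundance, is simply deferred to \cite{Bat18}, your text is an accurate strategic summary of the cited proofs, but the two steps that carry the actual mathematical weight are asserted rather than proved, and the one justification you do sketch for the key step would not go through as stated.
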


\begin{rem}
By well known results
of Khovanskii~\cite{Kho78}, one has
vanishing of the cohomology groups
 \[ h^{i}({\mathcal O}_{\mathcal V_\Delta}) = 0 \;\; (1 \leq i \leq d-2) \]
 and the equation $h^{d-1}({\mathcal O}_{\mathcal V_\Delta}) =
| \Delta^{\circ} \cap M|$. The numbers $h^{i}({\mathcal O}_{\mathcal V_\Delta})$ are
birational invariants of $\mathcal Z_\Delta$, because they do not depend
on a smooth projective compactification $\mathcal V_\Delta$ of $\mathcal Z_\Delta$. In particular,
the number
$| \Delta^{\circ} \cap M|$
is the geometric genus $p_g$ of the affine hypersurface $\mathcal Z_\Delta \subseteq \T^d$.
\end{rem}

Smooth projective compactifications of non-degenerate hypersurfaces
in $\T^d$ can be obtained using the theory of toric varieties~\cite{Kho78}.

Let $\Delta \subseteq \MQ$
be a lattice $d$-tope.
We consider
 the {\em normal fan} $\Sigma^{\Delta}$ \index{fan!normal} of $\Delta$
in the dual space $\NQ$, \emph{i.e.},
\label{normalfan}
$\Sigma^{\Delta}
\defeq \left\{\sigma^{\theta} \, \middle\vert \, \theta \preceq \Delta \right\}$,
where
$\sigma^{\theta}$ is the cone generated by all inward-pointing facet
normals of facets containing the face
$\theta \preceq \Delta$ of $\Delta$. One has $\dim(\sigma^{\theta}) +
\dim(\theta) =d$ for any face $\theta \preceq \Delta$.
We denote by $X_\Delta$ the normal projective toric variety constructed via the
normal fan $\Sigma^{\Delta}$.
In particular, the above function $ {\ord}_\Delta\; :\; \NQ \to \QQ$ is a piecewise
linear function with respect to this fan defining an ample
Cartier divisor on $X_\Delta$.
%Check
In particular,
the cones $\sigma^\theta \in \Sigma^\Delta$ are be defined
as
$$\sigma^{\theta}
= \left\{y \in \NQ\, \middle\vert \,
{\ord}_\Delta(y) = \left<x,y \right> \; \text{ for all } x \in \theta
\right\}. $$

\begin{rem}
Using the normal fan $\Sigma^\Delta$, one can compute the fundamental
group $\pi_1(\mathcal V_\Delta)$ of a smooth projective birational model $\mathcal V_\Delta$ of a non-degenerate
affine hypersurface
$\mathcal Z_\Delta$ (given as in Theorem \ref{mintheo}).
The fundamental group $\pi_1(\mathcal V_\Delta)$ does not depend on the choice of the
smooth birational model and it
is isomorphic to the quotient
of the lattice $N$ modulo the sublattice $N'$ generated by all lattice points in
$(d-1)$-dimensional cones $\sigma^\theta$
of the normal fan $\Sigma^\Delta$~\cite{BK06}.
\end{rem}

\begin{ex}
The minimal model ${\mathcal S_\Delta}$ of a non-degenerate affine
surface $\mathcal Z_\Delta$ defined by a
Laurent polynomial with the Newton polytope $\Delta$ from Example \ref{G5} is a
 {\em Godeaux surface}. It is a surface of general type
with $p_g = q=0$, $K^2 =1$, and $\pi_1({\mathcal S_\Delta}) \cong \ZZ/5 \ZZ$.
\end{ex}

\begin{dfn}
A lattice $d$-tope
$\Delta$ is called {\em canonical Fano $d$-tope} if $| \Delta^{\circ} \cap M| =1$. Up to a shift by a lattice vector, we will assume without
loss of generality that $0 \in M$ is the
single lattice point in the interior $\Delta^{\circ}$ of the canonical Fano
$d$-tope $\Delta$, \emph{i.e.},
$\Delta^{\circ} \cap M = \{ 0 \}$.
\end{dfn}

All canonical Fano $3$-topes have been classified~\cite{Kas10}. There exists exactly~\canonicalFano\ canonical Fano $3$-topes $\Delta$.
The aim of this paper is to present computational results of their
Fine interiors $\Delta^{\FI}$ and some related combinatorial invariants.
These data are important for computing minimal
smooth projective surfaces
$\mathcal S_\Delta$ with $p_g=1$ and $q =0$ which are birational to affine
non-degenerate hypersurfaces $\mathcal Z_\Delta \subseteq \T^3 \cong (\CC^\times)^3$.

The simplest description of the minimal surface $\mathcal S_\Delta$ has been
 obtained when $\Delta$ is a reflexive $3$-tope~\cite{Bat94}.

 \begin{dfn}
 A $d$-dimensional lattice
polytope $\Delta \subseteq \MQ$
containing the origin
$0 \in M$ in its interior
 is called {\em reflexive} if the dual polytope
\[ \Delta^* \defeq \{ y \in N\, \vert \, \langle x, y \rangle \geq -1 \; \text{ for all } x \in \Delta \} \subseteq \NQ\]
is a lattice polytope.
 \end{dfn}

There exists exactly $4,\!319$ reflexive
$3$-topes classified by Kreuzer and Skarke~\cite{KS98} and they form
a small subset in the list of all~\canonicalFano\ canonical Fano $3$-topes~\cite{Kas10}.
Reflexive $4$-topes are also classified by Kreuzer and Skarke~\cite{KS00}.
There exist $473,\!800,\!776$ reflexive $4$-topes, but the complete list
of all canonical Fano $4$-topes is unknown and expected
to be much bigger.

 If $\Delta$ is a reflexive
$d$-tope, then $X_\Delta$ is a Gorenstein toric Fano $d$-fold and
the Zariski closure $\overline{Z}_\Delta$ in $X_\Delta$
is a Gorenstein Calabi-Yau $(d-1)$-fold. If $d =3$, then $\overline{Z}_\Delta$ is a
$K3$-surface with at worst finitely many
Du Val singularities of type $A_k$.
The minimal
surface $\mathcal S_\Delta$ is a smooth $K3$-surface which is obtained as the
minimal (crepant)
desingularization of $\overline{Z}_\Delta$~\cite{Bat94}.

One motivation for the present paper is due to Corti and Golyshev, who have found
$9$ interesting examples of canonical Fano $3$-simplices $\Delta$ such
that the affine surfaces $\mathcal Z_\Delta$ are birational
to elliptic surfaces of Kodaira dimension $\kappa =1$~\cite{CG11}.

The computation of the Fine interior $\Delta^{\FI}$
for all canonical Fano $3$-topes
$\Delta \subseteq \MQ$ has shown that the dimension of the Fine interior
$\Delta^{\FI}$ has only three values: $0$, $1$,
and $3$. It is rather surprising
that there are no canonical Fano $3$-topes $\Delta$
with $\dim( \Delta^{\FI}) = 2$.

The condition $\dim (\Delta^{\FI}) =0$ holds
 if and only if $\Delta^{\FI}$ equals the lattice
point $0 \in M$.
There exist exactly $665,\!599$ canonical Fano
$3$-topes with $\Delta^{\FI} = \{0 \}$,
where $0 \in M$ is the only interior lattice point of $\Delta$.
These polytopes are characterized in~\cite[Proposition 3.4]{Bat18}
by the condition that $0 \in N$ is an interior lattice point of the $3$-dimensional
lattice polytope
\[ [\Delta^*]\defeq \conv( \Delta^* \cap N).\]

\begin{rem}
If $\Delta$ is a canonical Fano $3$-tope, then $\Delta^{\FI} = \{0 \}$
 if and only if the non-degenerate affine surface $\mathcal Z_\Delta$ is
birational to a $K3$-surface~\cite[Theorem 2.26]{Bat18}.
\end{rem}

The case $\dim( \Delta^{\FI}) =1$ splits in two subcases.
There exists exactly $20$
canonical Fano $3$-topes $\Delta$ such that $0 \in M$
is the midpoint of the Fine interior
$\Delta^{\FI}$. Therefore, we call this Fine interior {\em symmetric}. Canonical
Fano $3$-topes with $1$-dimensional symmetric Fine interior are characterized
by the condition that $[\Delta^*]$ is a $2$-dimensional
reflexive polytope.
The Fine interior of the remaining $9,\!020 $ canonical Fano $3$-topes with
$\dim( \Delta^{\FI}) =1$ contains $0 \in M$ as a
vertex. Therefore, we call this Fine interior {\em asymmetric}.
Canonical Fano $3$-topes with $1$-dimensional asymmetric Fine interior are
combinatorially characterized by the condition that $0 \in N$ is contained in the
relative interior of a facet
$\Theta \preceq [\Delta^*]$
of the lattice $3$-tope $[\Delta^*]$. The minimal surfaces $\mathcal S_\Delta$ corresponding
to canonical Fano $3$-topes with $1$-dimensional Fine interior (symmetric and
asymmetric) are elliptic surfaces of Kodaira dimension $\kappa=1$.

There exist exactly $49$ canonical Fano $3$-topes with $\dim (\Delta^{\FI}) =3$. These
polytopes are characterized by the condition that $0 \in N$ is a vertex
of the $3$-dimensional lattice polytope $[\Delta^*]$.
The surfaces $\mathcal S_\Delta$ corresponding
to canonical Fano $3$-topes $\Delta$
with $3$-dimensional Fine interior $\Delta^{\FI}$ are of general type
(\emph{i.e.}, $\mathcal S_\Delta$ has
maximal Kodaira dimension $\kappa=\dim(\mathcal S_\Delta)=2$).

\begin{rem}
The Fine interior computations were done using
\[ \Delta^{\FI}
= \bigcap_{\theta \preceq \Delta} \; \bigcap_{n \in \calh(\sigma^{\theta})}
\left\{ x \in \MQ \, \middle \vert \, \langle x,n \rangle
\geq {\ord}_\Delta(n) +1 \right\}, \]
where $\calh(\sigma^{\theta})$ denotes the set of all irreducible elements in the
monoid $\sigma^{\theta} \cap N$. It is the minimal generating set of the monoid
$\sigma^{\theta} \cap N$ and is called {\em Hilbert basis} of $\sigma^{\theta} \cap N$.
\end{rem}

In the next sections we consider examples and
discuss additional properties
of canonical Fano $3$-topes $\Delta$
in dependence of their Fine interiors $\Delta^{\FI}$.
All computations were done using the
\href{http://www.grdb.co.uk}{Graded Ring Database}\footnotemark[1], including the data of all~\canonicalFano\ canonical Fano $3$-topes and \href{http://magma.maths.usyd.edu.au/magma/}{MAGMA}\footnotemark[2].
Therefore, all statements have been checked by computer calculations.
The canonical Fano $3$-topes used as examples in this chapter appear with an
\href{http://www.grdb.co.uk/forms/toricf3c}{ID} that is the example's ID in the Graded Ring Database.\footnotemark[3]

\footnotetext[1]{\url{http://www.grdb.co.uk}}
\footnotetext[2]{\url{http://magma.maths.usyd.edu.au/magma/}}
\footnotetext[3]{\url{http://www.grdb.co.uk/forms/toricf3c}}

\section{Almost reflexive polytopes of dimension $3$ and $4$}

\begin{dfn}
A canonical Fano $d$-tope $\Delta \subseteq \MQ$ is called {\em almost reflexive}
if the convex hull of all $N$-lattice points in the dual polytope $\Delta^*$ is
reflexive.
\end{dfn}

It is {easy} to show the following statement:

\begin{prop}
If a canonical Fano $d$-tope $\Delta$ is almost reflexive, then
\[ \Delta^{FI} = \{0 \}. \]
\end{prop}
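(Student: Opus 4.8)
The plan is to compare the two polytopes $\Delta^{\FI}$ and $\{0\}$ directly through their defining inequalities, using the almost reflexivity hypothesis to control the relevant lattice normals. Write $[\Delta^*] = \conv(\Delta^* \cap N)$ for the convex hull of the lattice points of the dual polytope; by assumption $[\Delta^*]$ is reflexive, which in particular means $0 \in N$ lies in its interior and its vertices are lattice points. First I would recall that $\Delta^{\FI}$ is cut out by the inequalities $\langle x, n\rangle \geq {\ord}_\Delta(n) + 1$ as $n$ ranges over $N \setminus \{0\}$, and that it suffices to test these on any generating set of the primitive lattice normals — in particular on the vertices of $[\Delta^*]$, since every $n \in \Delta^* \cap N$ is a nonnegative combination (in the appropriate sense) of those vertices, and ${\ord}_\Delta$ is concave and positively homogeneous.

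The key observation is the translation between ${\ord}_\Delta$ and the dual polytope: for a primitive $n \in N$, the facet of $\Delta$ with inward normal $n$ sits at "height" ${\ord}_\Delta(n)$, and $n \in \Delta^*$ precisely when ${\ord}_\Delta(n) \geq -1$, with $n$ a \emph{vertex} of $[\Delta^*]$ (equivalently, a lattice point of $\Delta^*$ that is a vertex of the reflexive polytope $[\Delta^*]$) forcing ${\ord}_\Delta(n) = -1$ exactly. This last point is where almost reflexivity does the work: reflexivity of $[\Delta^*]$ says each of its vertices $v$ satisfies $\langle x, v\rangle \geq -1$ for all $x \in \Delta$ with equality somewhere, so ${\ord}_\Delta(v) = -1$ and hence the inequality defining $\Delta^{\FI}$ coming from $v$ reads $\langle x, v\rangle \geq -1 + 1 = 0$. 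Thus for every vertex $v$ of $[\Delta^*]$ we get $\langle x, v \rangle \geq 0$ as a constraint on $\Delta^{\FI}$.

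Now I would combine these: since $0$ is an interior point of the reflexive polytope $[\Delta^*]$, the vertices of $[\Delta^*]$ positively span $\NQ$, so the only point $x \in \MQ$ satisfying $\langle x, v\rangle \geq 0$ for all vertices $v$ is $x = 0$. Hence $\Delta^{\FI} \subseteq \{0\}$. For the reverse inclusion, $0 \in \Delta^{\circ} \cap M$ since $\Delta$ is a canonical Fano $d$-tope, and by the general fact recalled in the introduction, $\conv(\Delta^\circ \cap M) \subseteq \Delta^{\FI}$, so $0 \in \Delta^{\FI}$. Therefore $\Delta^{\FI} = \{0\}$.

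The main obstacle I anticipate is justifying rigorously that the inequalities indexed by the vertices of $[\Delta^*]$ already suffice to cut out $\Delta^{\FI}$ — i.e.\ that one need not worry about lattice points $n \in N$ lying outside $\Delta^*$ (where ${\ord}_\Delta(n) < -1$) contributing a sharper constraint near $0$. This should follow because for any $x$ near $0$ and any such $n$, the inequality $\langle x, n\rangle \geq {\ord}_\Delta(n) + 1$ is automatically satisfied when $x = 0$ (as ${\ord}_\Delta(n) \leq -1 < 0$ there), and since we already know $\Delta^{\FI} \subseteq \{0\}$ from the vertex constraints alone, no further inequalities can shrink it below $\{0\}$; but one must be slightly careful to phrase the argument so that the chain of inclusions is logically clean rather than circular — the clean route is: (vertex inequalities) $\Rightarrow \Delta^{\FI} \subseteq \{0\}$, and (canonical Fano) $\Rightarrow \{0\} \subseteq \Delta^{\FI}$, with no need to analyze the remaining inequalities at all.
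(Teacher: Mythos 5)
Your argument is correct, but it takes a genuinely more hands-on route than the paper. The paper argues in two lines: since $[\Delta^*]$ is reflexive and $[\Delta^*]\subseteq \Delta^*$, dualizing gives $\Delta\subseteq[\Delta^*]^*$; the Fine interior is monotone under inclusion (shrinking the polytope raises ${\ord}$ and hence tightens every defining inequality), so $\Delta^{\FI}\subseteq([\Delta^*]^*)^{\FI}=\{0\}$, the last equality being the standard fact that a reflexive polytope has Fine interior $\{0\}$. You instead unwind this at the level of the defining inequalities: every vertex $v$ of $[\Delta^*]$ lies in $\Delta^*\cap N$, so ${\ord}_\Delta(v)\geq -1$ and the inequality $\langle x,v\rangle\geq {\ord}_\Delta(v)+1$ forces $\langle x,v\rangle\geq 0$ on $\Delta^{\FI}$; since $0$ is interior to the reflexive polytope $[\Delta^*]$, its vertices positively span $\NQ$, so these inequalities alone pin $\Delta^{\FI}$ inside $\{0\}$, while $0\in\Delta^{\FI}$ because $0\in\Delta^\circ\cap M$. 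What your version buys is self-containedness (no appeal to monotonicity or to the Fine interior of a reflexive polytope); what the paper's buys is brevity by citing those two facts. Two small remarks: the exact equality ${\ord}_\Delta(v)=-1$ is not really a consequence of reflexivity of $[\Delta^*]$ ``with equality somewhere'' --- it follows from $v\in\Delta^*\cap N$, $v\neq 0$, $0\in\Delta^\circ$ and integrality of ${\ord}_\Delta(v)$ --- but you do not need it, since ${\ord}_\Delta(v)\geq -1$ already gives $\langle x,v\rangle\geq 0$; and the ``sufficiency'' of the vertex inequalities that you worry about (and that your opening paragraph briefly, and incorrectly as a general principle, asserts) is indeed irrelevant, exactly as your closing ``clean route'' states: $\Delta^{\FI}$ is an intersection over all nonzero $n\in N$, hence contained in the intersection over any subfamily, which is all that the inclusion $\Delta^{\FI}\subseteq\{0\}$ requires.
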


\begin{proof}
If $[ \Delta^*]$ is reflexive, then $\Delta = (\Delta^*)^*$ is contained in the
dual reflexive polytope $[\Delta^*]^*$. Therefore, the Fine interior of
$\Delta$ is contained in the Fine interior of the reflexive polytope $[\Delta^*]^*$
and $([\Delta^*]^*)^{\rm FI} =\{ 0 \}$. Thus, $\Delta^{\rm FI} = \{0 \}$.
\end{proof}

The converse statement is not true in general for $d \geq 5$, but
there exist many equivalent characterizations of reflexive and
almost reflexive
$d$-topes among canonical Fano $d$-topes if $d =3$ or $d=4$.

Let us recall some combinatorial
invariants of arbitrary lattice $d$-topes.

\begin{dfn}
The {\em Ehrhart power series} of an arbitrary
 lattice $d$-tope
$\Delta \subseteq \MQ$ is defined as
\[ P_\Delta(t) \defeq \sum_{k \geq 0}
\left\vert k \Delta \cap M\right\vert t^k, \]
where $|k \Delta \cap M|$ denotes the number of
lattice points in the $k$-th dilate $k \Delta$
of $\Delta$.
\end{dfn}

This Ehrhart series is a rational function of the form
\[ P_\Delta(t) = \frac{\psi_d (\Delta)t^ d + \cdots + \psi_1 (\Delta)t +
\psi_0 (\Delta)}{(1 - t)^{d+1}}, \]
where $\psi_i(\Delta)$ are non-negative integers
for all $0 \leq i \leq d$~\cite{Sta80} such
that
$\psi_0(\Delta) =1$ and $\psi_1(\Delta) = |\Delta \cap M| - d -1$.
Moreover,
 $\sum_{i=0}^d \psi_i(\Delta) = v(\Delta)$, where
 $v(\Delta) \defeq d! \cdot \vol(\Delta)$ denotes the
{\emph normalized volume} of $\Delta$.

One has the following characterization of reflexive $d$-topes:

\begin{propnopar}[{\cite[Theorem 4.6]{BR06}}]
A canonical Fano $d$-tope $\Delta$ is reflexive if and only if
\[ \psi_i(\Delta) = \psi_{d-i}(\Delta) \; \; (0 \leq i \leq d). \]
\end{propnopar}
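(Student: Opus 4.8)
The plan is to exploit the classical Ehrhart reciprocity theorem of Macdonald, which states that for a lattice $d$-tope $\Delta$ one has
\[ P_{\Delta^\circ}(t) \defeq \sum_{k \geq 1} \left\vert k\Delta^\circ \cap M \right\vert t^k = (-1)^{d+1} P_\Delta(1/t), \]
where $k\Delta^\circ$ denotes the relative interior of the dilate $k\Delta$. Writing $P_\Delta(t) = \left(\sum_{i=0}^d \psi_i(\Delta) t^i\right)/(1-t)^{d+1}$ and substituting, a short manipulation shows that the numerator of $P_{\Delta^\circ}(t)$ is $\sum_{i=0}^d \psi_{d-i}(\Delta) t^{i+1}$; in particular the coefficient of $t^k$ in $P_{\Delta^\circ}(t)$, i.e. $\vert k\Delta^\circ \cap M\vert$, is a fixed polynomial in the reversed $\psi$-vector. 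The symmetry condition $\psi_i(\Delta) = \psi_{d-i}(\Delta)$ for all $i$ is therefore equivalent to $P_{\Delta^\circ}(t) = t\, P_\Delta(t)$, which in turn is equivalent to the Ehrhart identity $\vert k\Delta^\circ \cap M \vert = \vert (k-1)\Delta \cap M\vert$ for every $k \geq 1$.

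Granting this reformulation, the proof of the proposition splits into the two implications. For the forward direction, suppose $\Delta$ is reflexive with $\Delta^\circ \cap M = \{0\}$ and $X_\Delta$ the associated Gorenstein toric Fano. Then $k\Delta$ is the polytope of the ample Cartier divisor $-kK_{X_\Delta}$, and $(k\Delta)^\circ \cap M$ counts global sections of $-kK + K = -(k-1)K$; since $X_\Delta$ has rational singularities this is exactly $\vert (k-1)\Delta \cap M\vert$. Equivalently and more elementarily, reflexivity gives $\Delta^* \subseteq N$ a lattice polytope, so the facet description of $\Delta$ has all right-hand sides equal to $-1$; the set $(k\Delta)^\circ \cap M$ is then cut out by strict inequalities $\langle x, n_F\rangle > -k$, i.e. $\langle x, n_F \rangle \geq -(k-1)$ because $n_F \in N$, which is precisely the system defining $(k-1)\Delta \cap M$. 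Hence the Ehrhart identity holds and the $\psi$-vector is palindromic. For the converse, assume $\psi_i = \psi_{d-i}$ for all $i$, so $\vert\Delta^\circ \cap M\vert = \vert 0\cdot\Delta \cap M\vert = 1$; thus $\Delta$ is canonical Fano and we may take $\Delta^\circ \cap M = \{0\}$. One then shows that palindromicity of $\psi$ forces the right-hand sides $\lceil \operatorname{ord}_\Delta(n_F)\rceil$ in the facet presentation to all equal $-1$: if some facet $F$ had lattice distance $\geq 2$ from the origin, the dilate $2\Delta$ would acquire an interior lattice point near $F$ that is not of the form $v + w$ with $v \in \Delta \cap M$ and $0$ on $F$, making $\vert 2\Delta^\circ \cap M\vert > \vert\Delta \cap M\vert$ and contradicting the $k=2$ case of the Ehrhart identity. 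Therefore every facet lies at lattice distance $1$ from $0$, which is exactly the statement that $\Delta^* $ is a lattice polytope, i.e. $\Delta$ is reflexive.

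The main obstacle is the converse direction, and specifically the step that extracts the geometric conclusion "$\Delta^*$ is a lattice polytope" from the purely enumerative hypothesis. The clean way to package it is to prove the equivalence, for a canonical Fano $d$-tope $\Delta$, of the following: (a) $\psi_i(\Delta) = \psi_{d-i}(\Delta)$ for all $i$; (b) $\vert k\Delta^\circ \cap M\vert = \vert(k-1)\Delta \cap M\vert$ for all $k \geq 1$; (c) $\lceil \operatorname{ord}_\Delta(n_F) \rceil = -1$ for every facet $F$ of $\Delta$ with primitive inner normal $n_F$. The implications (a) $\Leftrightarrow$ (b) are Ehrhart reciprocity as above, and (c) $\Rightarrow$ (b) is the facet-inequality argument. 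The delicate part is (b) $\Rightarrow$ (c): one must rule out the possibility that non-reflexive "defects" of different facets conspire to leave the lattice-point counts unchanged. I expect the cleanest route is to use only $k=2$: the identity $\vert 2\Delta^\circ \cap M\vert = \vert\Delta \cap M\vert$, combined with the obvious inclusion $\{v + 0 : v \in \Delta \cap M\} \subseteq 2\Delta^\circ$ (valid because $0 \in \Delta^\circ$), forces that inclusion to be an equality of lattice-point sets, and a local analysis at each facet then yields lattice distance exactly $1$. Once this is in place, the proposition follows, and one notes it is essentially a restatement of Hibi's characterization of reflexive polytopes via the palindromy of the $h^*$-vector, here phrased in the $\psi$-notation and restricted to the canonical Fano case.
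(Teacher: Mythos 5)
Note first that the paper does not prove this proposition at all: it is quoted from \cite[Theorem 4.6]{BR06}, i.e.\ it is Hibi's palindromicity criterion for reflexive polytopes. Your reduction via Ehrhart--Macdonald reciprocity is correct: the symmetry $\psi_i(\Delta)=\psi_{d-i}(\Delta)$ for all $i$ is equivalent to $\vert (k\Delta)^\circ\cap M\vert=\vert (k-1)\Delta\cap M\vert$ for \emph{all} $k\geq 1$, and your forward direction (reflexive $\Rightarrow$ these identities, by rounding the strict facet inequalities $\langle x,n_F\rangle>-k$ to $\geq -(k-1)$) is complete and is the standard argument.

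The gap is in the converse, precisely at the step you yourself flag as delicate. Your proposed shortcut --- deduce from the single identity $\vert(2\Delta)^\circ\cap M\vert=\vert\Delta\cap M\vert$ that every facet lies at lattice distance $1$ --- is false in general dimension; it is correct only for $d\leq 4$, which is exactly why the paper states its count criterion (Proposition \ref{3-4-ref}) only for $d\in\{3,4\}$. A counterexample in dimension $5$ is the simplex $\Delta=\conv\bigl(e_1,\dots,e_5,(-1,-1,-1,-1,-2)\bigr)$, the $d=5$ analogue of the paper's polytope with ID $547385$. Its facet inequalities are $\langle x,(1,1,1,1,1)\rangle\leq 1$, $\langle x,n_i\rangle\leq 1$ for $i=1,\dots,4$ (where $n_i$ has entry $-6$ in position $i$ and $1$ elsewhere), and $\langle x,(2,2,2,2,-5)\rangle\leq 2$; the last facet has lattice distance $2$, so $\Delta$ is not reflexive. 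Yet writing $s=\sum_i x_i$, the strict inequalities defining $(2\Delta)^\circ$ become $s\leq 1$, $x_i\geq (s-1)/7$, $x_5\geq(2s-3)/7$ for integer points, and a short case check over $s$ shows $(2\Delta)^\circ\cap M=\Delta\cap M=\{0,e_1,\dots,e_5,(-1,-1,-1,-1,-2)\}$, so $\psi_1(\Delta)=\psi_4(\Delta)=1$ while $\psi_2(\Delta)\neq\psi_3(\Delta)$ (they sum to $3$). So the ``interior lattice point near $F$'' you invoke for a distant facet need not exist at the level $k=2$; in the genuine proof one must use the identities for arbitrarily large $k$, dilating until an intermediate integral slice of the cone over the offending facet is large enough to capture a lattice point of $((k+1)\Delta)^\circ\setminus k\Delta$. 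That argument is the actual content of Hibi's theorem, i.e.\ of the cited \cite[Theorem 4.6]{BR06}, and your ``local analysis at each facet'' leaves it unproved, so the direction (palindromic $\Rightarrow$ reflexive) is not established by the proposal as written.
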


The Ehrhart reciprocity
implies
that the power series
 \[ Q_\Delta(t) \defeq \sum_{k \geq 1}
\left\vert (k \Delta)^\circ \cap M\right\vert t^k \]
is a rational function
\[ Q_\Delta(t) = \frac{\varphi_{d+1}(\Delta)t^{d+1} + \cdots + \varphi_2 (\Delta)t +
\varphi_1 (\Delta)t + \varphi_0 (\Delta)}{(1 - t)^{d+1}}, \]
where $\varphi_0 (\Delta)=0$ and $\varphi_1 (\Delta) = |\Delta^\circ \cap M|$. Using Serre duality, one obtains
\[ \varphi_i(\Delta) = \psi_{d+1-i}(\Delta) \; \; (1 \leq i \leq d+1), \]
 \emph{i.e.}, in particular
 \[ \psi_d(\Delta)= \varphi_1(\Delta) = |\Delta^\circ \cap M|\]
 and
\[ \psi_{d-1}(\Delta) =\varphi_2(\Delta) = |2 \Delta^\circ \cap M| -
(d+1) |\Delta^\circ \cap M| \]~\cite[Section 4, 5.11]{DK86}.
Therefore, the lattice $d$-tope $\Delta$ is a canonical Fano $d$-tope
if and only
if $\psi_d(\Delta) =1$.
Moreover,
\[ \psi_{d-1}(\Delta) = |(2 \Delta)^\circ \cap M| - (d+1) \]
if $\Delta$ is a canonical Fano $d$-tope.

Applying the above equations, one
immediately obtains
the following criterion for reflexivity of canonical Fano $d$-topes
in the case $d =3, 4$:

\begin{prop} \label{3-4-ref}
Let $\Delta \subseteq \MQ$ be a canonical Fano $d$-tope with $d \in \{ 3,4\}$.
Then for $d =3$, one has
\[ P_\Delta(t) = \frac{t^ 3 + (|(2 \Delta)^\circ \cap M| -4)t^2 + ( |\Delta \cap M| - 4)t
 + 1}{(1 - t)^{4}} \]
and for $d =4$, one obtains
\[ P_\Delta(t) = \frac{t^ 4 + (|(2 \Delta)^\circ \cap M| -5)t^3 + \psi_2(\Delta) t^2 +
( |\Delta \cap M| - 5)t
 + 1}{(1 - t)^{5}}. \]
In particular, $\Delta$ is reflexive if and only if
\[ \left\vert \Delta \cap M\right\vert=
\left\vert (2 \Delta)^\circ \cap M\right\vert. \]
\end{prop}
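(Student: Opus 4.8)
The plan is to read off the $h^*$-polynomials (the numerators of the Ehrhart series) directly from the general formulae established just before the statement, and then to invoke the $h^*$-symmetry criterion for reflexivity.

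First I would recall the general shape of $P_\Delta(t)$ for a canonical Fano $d$-tope, namely that the numerator is $\psi_d(\Delta)t^d + \cdots + \psi_1(\Delta)t + \psi_0(\Delta)$ with $\psi_0(\Delta) = 1$, $\psi_1(\Delta) = |\Delta \cap M| - d - 1$, and — since $\Delta$ is canonical Fano — $\psi_d(\Delta) = 1$ and $\psi_{d-1}(\Delta) = |(2\Delta)^\circ \cap M| - (d+1)$. For $d = 3$ this accounts for all four coefficients $\psi_0, \psi_1, \psi_2, \psi_3$, giving the stated numerator $t^3 + (|(2\Delta)^\circ \cap M| - 4)t^2 + (|\Delta \cap M| - 4)t + 1$. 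For $d = 4$ the same substitutions pin down $\psi_0, \psi_1, \psi_3, \psi_4$, while $\psi_2(\Delta)$ remains a free coefficient, which is why it appears unevaluated in the displayed formula; so the $d = 4$ case is equally immediate.

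Next I would apply the characterization of reflexive $d$-topes quoted above from \cite{BR06}: a canonical Fano $d$-tope $\Delta$ is reflexive if and only if $\psi_i(\Delta) = \psi_{d-i}(\Delta)$ for all $0 \leq i \leq d$. For $d = 3$, the conditions $\psi_0 = \psi_3$ and $\psi_1 = \psi_2$ are the only ones; the first, $1 = 1$, holds automatically for any canonical Fano $3$-tope, so reflexivity is equivalent to the single equation $\psi_1(\Delta) = \psi_2(\Delta)$, i.e. $|\Delta \cap M| - 4 = |(2\Delta)^\circ \cap M| - 4$, which is exactly $|\Delta \cap M| = |(2\Delta)^\circ \cap M|$. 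For $d = 4$ one must check $\psi_0 = \psi_4$, $\psi_1 = \psi_3$, and $\psi_2 = \psi_2$; the first and last are automatic, so again reflexivity reduces to $\psi_1 = \psi_3$, i.e. $|\Delta \cap M| - 5 = |(2\Delta)^\circ \cap M| - 5$, giving the same criterion $|\Delta \cap M| = |(2\Delta)^\circ \cap M|$.

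There is no real obstacle here: the statement is essentially a bookkeeping consequence of the Ehrhart/Serre-duality identities recalled in the paragraphs preceding it, together with the Bruns--Römer symmetry criterion. The only point requiring a moment's care is observing that in both $d = 3$ and $d = 4$ the "outermost" symmetry relations ($\psi_0 = \psi_d$ and, when $d = 4$, $\psi_2 = \psi_2$) carry no information for canonical Fano polytopes, so that the entire content of reflexivity collapses to the equality of the two point counts $|\Delta \cap M|$ and $|(2\Delta)^\circ \cap M|$; everything else is direct substitution into the formulae already derived.
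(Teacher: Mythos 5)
Your proposal is correct and is essentially the paper's own argument: the paper derives the coefficient identities $\psi_0=\psi_d=1$, $\psi_1(\Delta)=|\Delta\cap M|-d-1$, $\psi_{d-1}(\Delta)=|(2\Delta)^\circ\cap M|-(d+1)$ in the preceding paragraphs and then states that the proposition ``immediately'' follows via the symmetry criterion $\psi_i(\Delta)=\psi_{d-i}(\Delta)$, exactly as you do, including the observation that only $\psi_1=\psi_{d-1}$ carries content. One cosmetic slip: the cited criterion \cite[Theorem 4.6]{BR06} is from Beck--Robins (Hibi's palindromicity theorem), not Bruns--R\"omer, but this does not affect the mathematics.
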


\begin{prop}
Let $\Delta \subseteq \MQ$ be a canonical Fano $d$-tope
with $d \in \{3,4\}$ such
that $0 \in N$ is an interior lattice point of $[\Delta^*]$. Then $ [\Delta^*]$
is reflexive, {\em i.e.}, $\Delta$ is almost reflexive.
\end{prop}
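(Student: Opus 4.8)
The plan is to apply the reflexivity criterion of Proposition~\ref{3-4-ref} to the lattice polytope $P \defeq [\Delta^*] = \conv(\Delta^* \cap N) \subseteq \NQ$. First I would check that $P$ is itself a canonical Fano $d$-tope. It is a lattice polytope by construction, and it is full-dimensional with $0 \in N$ in its interior by hypothesis, so it only remains to see that $P$ has no other interior lattice point. Since $P \subseteq \Delta^*$ we have $P^\circ \subseteq (\Delta^*)^\circ$, so it suffices to show $(\Delta^*)^\circ \cap N = \{0\}$. Indeed, a nonzero $n \in (\Delta^*)^\circ \cap N$ would satisfy $\langle v, n\rangle > -1$ for every vertex $v$ of $\Delta$, hence $\langle v, n\rangle \geq 0$ by integrality, hence ${\ord}_\Delta(n) \geq 0$ (the minimum defining ${\ord}_\Delta$ is attained at a vertex); but $0 \in \Delta^\circ$ forces ${\ord}_\Delta(n) < 0$ for all $n \neq 0$, a contradiction.

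Next I would compute the two lattice point counts appearing in the criterion. On the one hand $P \cap N = \Delta^* \cap N$, since $P$ is the convex hull of $\Delta^* \cap N$. On the other hand I claim $(2P)^\circ \cap N = \Delta^* \cap N$ as well. For the inclusion $\Delta^* \cap N \subseteq (2P)^\circ$: each $n \in \Delta^* \cap N$ lies in $P$, so its midpoint with $0 \in P^\circ$, namely $n/2$, lies in $P^\circ$, whence $n \in (2P)^\circ$. For the reverse inclusion: if $m \in (2P)^\circ \cap N$ then $m/2 \in P^\circ \subseteq (\Delta^*)^\circ$, so $\langle v, m\rangle > -2$ for every vertex $v$ of $\Delta$, and integrality upgrades this to $\langle v, m\rangle \geq -1$, i.e.\ $m \in \Delta^* \cap N$. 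Hence $|P \cap N| = |(2P)^\circ \cap N|$, and Proposition~\ref{3-4-ref}, which applies since $d \in \{3,4\}$ and $P$ is a canonical Fano $d$-tope, shows that $P = [\Delta^*]$ is reflexive, i.e.\ $\Delta$ is almost reflexive.

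I do not expect a genuine obstacle: the proof is a short chain, and its only recurring device is the integrality observation that a $\ZZ$-valued functional taking values $> -1$ on all the relevant lattice points in fact takes values $\geq 0$ --- this is precisely what pins down $(\Delta^*)^\circ \cap N = \{0\}$ and what collapses the interior lattice points of $2[\Delta^*]$ onto the lattice points of $[\Delta^*]$. One should still verify the routine convexity facts invoked (that $P \subseteq \Delta^*$ gives $P^\circ \subseteq (\Delta^*)^\circ$, and that the midpoint of a polytope point and an interior point is interior). Finally, note that the hypotheses on $\Delta$ are used only through ``$\Delta$ is a lattice $d$-tope with $0 \in \Delta^\circ$'', and that the restriction $d \leq 4$ enters solely via Proposition~\ref{3-4-ref}.
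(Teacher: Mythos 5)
Your proposal is correct and follows essentially the same route as the paper: first verify that $[\Delta^*]$ is a canonical Fano $d$-tope (the only interior lattice point being $0$, via the integrality argument), then show $[\Delta^*] \cap N = (2[\Delta^*])^\circ \cap N$ by the same ``integer $> -2$ implies $\geq -1$'' trick, and conclude with Proposition~\ref{3-4-ref}. The only cosmetic differences are that you rule out nonzero interior lattice points of $\Delta^*$ using $\ord_\Delta(n) < 0$ where the paper writes $\MQ$ as non-negative combinations of $\Delta \cap M$, and that you phrase the counting with $\Delta^* \cap N$ rather than $[\Delta^*] \cap N$, which are the same set.
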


\begin{proof}
Let $n \in N$ be an interior lattice point of $[\Delta^*]$. Then
 $ \langle x, n \rangle \geq 0$ for all $x \in \Delta \cap M$ because
\[ \Delta^* = \{ y \in \NQ \; |\; \langle x, y \rangle \geq -1 \;\; \text{ for all }
x \in \Delta \} \]
and $ \langle x, n \rangle$ is an integer.
Since $0 \in \Delta^\circ \cap M$, $\MQ$ is the set of all non-negative $\QQ$-linear
combinations of all lattice points in $\Delta \cap M$. This implies
$ \langle x', n \rangle \geq 0$ for all $x' \in \MQ$, \emph{i.e.}, $n =0$.
Therefore,
$[\Delta^*]$ has only one interior lattice point $0 \in N$, \emph{i.e.},
$[\Delta^*]$ is a canonical Fano $d$-tope.

It is clear that
$[\Delta^*]$ is contained in the interior of $2[\Delta^*]$. Therefore,
we have $[\Delta^*] \cap N \subseteq (2[\Delta^*])^\circ \cap N$. On the other
hand, for any lattice point $n \in (2[\Delta^*])^\circ$,
$ \langle x, n \rangle > -2 $ for all $x \in \Delta \cap M$. Since
$ \langle x, n \rangle $ is an integer, $n \in \Delta^* \cap N$, \emph{i.e.},
\[ [\Delta^*] \cap N = (2[\Delta^*])^\circ \cap N . \]
Using Proposition \ref{3-4-ref}, $[\Delta^*]$ is reflexive.
\end{proof}

\begin{coro} \label{min-ref}
Let $\Delta \subseteq \MQ$ be a canonical Fano $d$-tope with $d \in \{3,4\}$
such that $0 \in N$ is an interior lattice point of $[\Delta^*]$. Then $[\Delta^*]^*$
is the
smallest $($referring to inclusion$)$
reflexive polytope containing $\Delta$.
\end{coro}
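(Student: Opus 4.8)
The plan is to prove a containment in both directions: first that $[\Delta^*]^*$ is a reflexive polytope containing $\Delta$, and second that any reflexive polytope containing $\Delta$ must contain $[\Delta^*]^*$. The first part is essentially already in hand. By the previous proposition, the hypothesis that $0 \in N$ is an interior lattice point of $[\Delta^*]$ forces $[\Delta^*]$ to be reflexive; hence its dual $[\Delta^*]^*$ is also a (lattice) reflexive polytope. That $\Delta \subseteq [\Delta^*]^*$ follows from general polar duality: since $[\Delta^*] \subseteq \Delta^*$ we get $\Delta = (\Delta^*)^* \subseteq [\Delta^*]^*$, using that $\Delta$ is a lattice polytope with $0$ in its interior so that $(\Delta^*)^* = \Delta$. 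This already reproves the containment statement of the earlier proposition and gives one reflexive polytope containing $\Delta$.

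The substance is minimality. Let $P$ be any reflexive polytope with $\Delta \subseteq P$. Applying polar duality reverses inclusions, so $P^* \subseteq \Delta^*$. Since $P$ is reflexive, $P^*$ is a lattice polytope; being a lattice polytope contained in $\Delta^*$, it is contained in the convex hull of the lattice points of $\Delta^*$, i.e. $P^* \subseteq \conv(\Delta^* \cap N) = [\Delta^*]$. Dualizing once more and again using that reflexive polytopes are self-polar-dual (so $(P^*)^* = P$) together with $[\Delta^*]^{**} = [\Delta^*]^*$ (legitimate because $[\Delta^*]$ is reflexive, in particular has $0$ in its interior), we obtain $[\Delta^*]^* \subseteq P$. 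This is exactly the claimed minimality.

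The step I expect to require the most care is the bookkeeping around when $Q^{**} = Q$ and when polar duality reverses inclusions, since these identities need $0$ to lie in the interior of the relevant polytope and need $Q$ to be a lattice polytope (for the ``lattice polytope inside $\Delta^*$ lies inside $[\Delta^*]$'' step). All three polytopes in play — $\Delta$, $[\Delta^*]$, and $P$ — do contain $0$ in their interior ($\Delta$ and $P$ by the canonical Fano / reflexive hypotheses, $[\Delta^*]$ by assumption), and $[\Delta^*]$ and $P$ are lattice polytopes, so each invocation is justified, but one should state these hypotheses explicitly at each use. No genuinely hard estimate is involved; the corollary is a formal consequence of Proposition (the one immediately preceding it) plus the standard inclusion-reversing and involutive properties of polar duality for reflexive polytopes.
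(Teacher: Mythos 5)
Your argument is correct and is essentially the paper's own proof: the minimality step (dualize $\Delta \subseteq P$ to $P^* \subseteq \Delta^*$, use that $P^*$ is a lattice polytope to conclude $P^* \subseteq [\Delta^*]$, then dualize again to get $[\Delta^*]^* \subseteq P$) is exactly what the paper does, and your first paragraph merely makes explicit, via the preceding proposition, that $[\Delta^*]^*$ is itself a reflexive polytope containing $\Delta$. One small nit: the identity you invoke as $[\Delta^*]^{**} = [\Delta^*]^*$ should read $[\Delta^*]^{**} = [\Delta^*]$, and it is not actually needed, since dualizing $P^* \subseteq [\Delta^*]$ directly gives $[\Delta^*]^* \subseteq (P^*)^* = P$.
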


\begin{proof}
 Let $\Delta' \subseteq \MQ$ be a reflexive $d$-tope such that
 $\Delta \subseteq \Delta'$.
Then $(\Delta')^* \subseteq \Delta^*$. Since $(\Delta')^*$ is a lattice
polytope, it is contained in $[\Delta^*]$. Thus, $[\Delta^*]^*$ is contained
in $((\Delta')^*)^* = \Delta'$.
\end{proof}

\begin{rem}
If $\Delta$ is a reflexive $d$-tope, then $[2\Delta^\circ] = \Delta$.
 If $\Delta$
is a canonical Fano $d$-tope with $d \in \{3,4\}$ such that $\Delta^{\FI}=\{0\}$
and $\Delta$ is contained in a reflexive
$d$-tope $\Delta'$,
then $[2\Delta^\circ]$ is contained in
$[(2\Delta')^\circ] = \Delta'$. Therefore, $[2\Delta^\circ]$ is contained
in the smallest reflexive polytope $[\Delta^*]^*$ containing $\Delta$, \emph{i.e.},
\[ [2\Delta^\circ] \subseteq [\Delta^*]^*. \]
Computations showed that among all $665,\!599$ canonical Fano $3$-topes 
$\Delta$ with $\Delta^{\rm FI} = \{0 \}$ there exist exactly $211,\!941$ canonical Fano $3$-tops 
such that $[2\Delta^\circ]$ is reflexive.  For the remaining canonical Fano $3$-topes $\Delta$ the lattice 
$3$-topes $[2\Delta^\circ] $ are larger than $\Delta$, but are not equal to the reflexive hull 
$[\Delta^*]^*$.
\end{rem}

\begin{rem}
Let $\Delta$ be an almost reflexive $3$-tope. 
We denote by $\tau(\Delta)$
the lattice $d$-tope $[2\Delta^\circ]$ .  If  
$\tau(\Delta)$ is not reflexive, then it is almost reflexive and we can consider 
the larger lattice $d$-tope $\tau^2(\Delta) \defeq \tau(\tau(\Delta)) \subseteq  [\Delta^*]^*$. 
After at morst five steps, $\tau^k(\Delta)$ is equal to the reflexive 
hull $[\Delta^*]^*$ of $\Delta$.
\end{rem}

In dimension $4$, the situation is comparable:

\begin{ex}
Let $\Delta \subseteq \RR^4$ be the almost reflexive  $4$-tope defined
by the inequalities
$x_i \geq -1$ $(1 \leq i \leq 4)$,
 $x_1 \leq 2$, and $x_1 + x_2 + x_3 + x_4 \leq 1$. Then $\Delta^{FI} = \{0\}$ and
the smallest reflexive $4$-tope containing $\Delta$ is the $4$-simplex
$ [\Delta^*]^*$
defined by the inequalities $x_i \geq -1$ $(1 \leq i \leq 4)$
 and $x_1 + x_2 + x_3 + x_4 \leq 1$. It is easy to see that $\tau(\Delta)$ is not the reflexive 
 $4$-tope $[\Delta^*]^*$ because the vertex
$(4,-1,-1,-1) \in \vertex([\Delta^*]^*)$ is not in
$2\Delta^\circ$. 
However, $\tau^2(\Delta) = [\Delta^*]^*$. 
\end{ex}

\section{Canonical Fano $3$-topes with $\Delta^{\FI} = \{0\}$}

We note that the set of all reflexive $3$-topes forms a
rather small part of the set of all canonical Fano $3$-topes.
The majority of canonical Fano $3$-topes
belongs to the subset of almost reflexive $3$-topes.
The proof of the following statement is
based on the result of Skarke~\cite{Ska96}
and the explanations in the previous section:

\begin{prop} A canonical Fano $3$-tope $\Delta$ is almost reflexive
if one of the following equivalent conditions is satisfied:
\begin{enumerate}[label=(\roman*)]
\item[{\em (i)}] $\Delta^{\FI} =\{0\};$
\item[{\em (ii)}] $0 \in N$ is an interior lattice point of $[\Delta^* ]$;
\item[{\em (iii)}] $\Delta$ is contained in some reflexive $3$-tope;
\item[{\em (iv)}] $\tau^k(\Delta)$ is the reflexive $3$-tope $[\Delta^*]^*$ for some sufficiently large $k$ $(1 \leq k \leq 5)$;
\item[{\em (v)}] the lattice $3$-tope
$[2\Delta^\circ]$ has exactly one interior lattice point;
\item[{\em (vi)}] the non-degenerate affine hypersurface $\mathcal Z_\Delta$
defined by a Laurent
polynomial with Newton polytope $\Delta$ is birational to
a smooth $K3$-surface.
\end{enumerate}
\end{prop}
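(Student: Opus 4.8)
The plan is to establish a cycle of implications among (i)--(vi), using the results already proven in the previous two sections together with Skarke's theorem on the structure of the dual polytope.

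First I would recall Skarke's result~\cite{Ska96}: a canonical Fano $3$-tope $\Delta$ is contained in some reflexive $3$-tope if and only if the single interior lattice point $0 \in M$ of $\Delta$ lies in the interior of $[\Delta^*]$ after dualizing appropriately; more precisely, the key combinatorial input is that for $d = 3$, if $0 \notin ([\Delta^*])^\circ$ then $\Delta$ cannot be enclosed in any reflexive polytope. This immediately gives the equivalence (ii) $\Leftrightarrow$ (iii). The implication (iii) $\Rightarrow$ (i) is Proposition~2.7 applied in the form already used: if $\Delta \subseteq \Delta'$ with $\Delta'$ reflexive, then $\Delta^{\FI} \subseteq (\Delta')^{\FI} = \{0\}$, and since $0 \in \Delta^\circ \cap M \subseteq \Delta^{\FI}$ we get equality. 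For (ii) $\Rightarrow$ (iv) I would invoke the preceding Proposition (the one showing $0 \in ([\Delta^*])^\circ$ implies $[\Delta^*]$ is reflexive, hence $\Delta$ almost reflexive) together with the Remark on the iteration $\tau^k$, which asserts that after at most five steps $\tau^k(\Delta) = [\Delta^*]^*$; and (iv) $\Rightarrow$ (iii) is trivial since $[\Delta^*]^*$ is reflexive and contains $\Delta$.

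Next I would handle the two remaining equivalences. For (i) $\Leftrightarrow$ (v): by Proposition~\ref{3-4-ref} and the Ehrhart-duality identities recalled before it, for a canonical Fano $3$-tope one has $\psi_2(\Delta) = |(2\Delta)^\circ \cap M| - 4$, and $|(2\Delta)^\circ \cap M|$ counts the interior lattice points of the dilate; the lattice polytope $[2\Delta^\circ]$ has interior lattice points exactly the interior lattice points of $2\Delta$ that... here one must be slightly careful, but the point is that $\dim \Delta^{\FI} = 0$ is equivalent to $[\Delta^*]$ having $0$ in its interior (cited from \cite[Proposition 3.4]{Bat18} in the introduction), and the latter is in turn equivalent, via Corollary~\ref{min-ref} and the Remark identifying $[2\Delta^\circ]$ as sandwiched between $\Delta$ and $[\Delta^*]^*$, to $[2\Delta^\circ]$ being a canonical Fano $3$-tope, i.e. having a single interior lattice point. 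Finally (i) $\Leftrightarrow$ (vi) is exactly the Remark in the introduction (cited as \cite[Theorem 2.26]{Bat18}): $\Delta^{\FI} = \{0\}$ iff $\mathcal Z_\Delta$ is birational to a $K3$-surface. Assembling: (vi) $\Leftrightarrow$ (i) $\Leftrightarrow$ (v), (i) $\Leftrightarrow$ (iii) $\Leftrightarrow$ (ii) $\Leftrightarrow$ (iv), and then note that the hypothesis of the Proposition in the previous section (that $0$ is interior to $[\Delta^*]$) is condition (ii), which yields almost reflexivity, closing the argument.

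The main obstacle I anticipate is the equivalence (i) $\Leftrightarrow$ (v): one needs to argue carefully that "$[2\Delta^\circ]$ has exactly one interior lattice point" is genuinely equivalent to "$0$ is interior to $[\Delta^*]$" and not merely implied by it. The subtlety is that $[2\Delta^\circ]$ could in principle fail to contain $0$ in its interior even when $\Delta^{\FI} = \{0\}$, or contain extra interior lattice points. The resolution should use that $\Delta^{\FI} = \{0\}$ forces, by the definition of the Fine interior, $2\Delta^\circ$ to be "just barely" containing $0$ — precisely, the inequalities $\langle x, n\rangle \geq \ord_\Delta(n) + 1$ becoming equalities at $0$ translate into $0$ lying on facets of $[2\Delta^\circ]$ after rescaling — combined with the cited characterization via $[\Delta^*]$ from \cite{Bat18}. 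Everything else is either a direct citation or an immediate consequence of the material developed above, so the write-up reduces to bookkeeping once this equivalence is pinned down.
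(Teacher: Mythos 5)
The one step that does not go through as you have sketched it is the equivalence involving (v), and specifically the direction (v) $\Rightarrow$ (i)/(ii). The material you cite — Corollary \ref{min-ref} and the Remark sandwiching $[2\Delta^\circ]$ between $\Delta$ and $[\Delta^*]^*$ — only operates \emph{under the hypothesis} that $0$ is interior to $[\Delta^*]$: it yields $\Delta \subseteq [2\Delta^\circ] \subseteq [\Delta^*]^*$ and hence that $[2\Delta^\circ]$ has the single interior lattice point $0$, i.e.\ (ii) $\Rightarrow$ (v). For the converse you only know that $[2\Delta^\circ]$ is some canonical Fano $3$-tope containing $\Delta$; monotonicity of the Fine interior gives $\Delta^{\FI} \subseteq [2\Delta^\circ]^{\FI}$, which is useless unless you already know $[2\Delta^\circ]^{\FI}=\{0\}$ — exactly the kind of statement being proved, so the argument is circular. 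Your closing suggestion (inequalities $\langle x,n\rangle \geq \ord_\Delta(n)+1$ "becoming equalities at $0$") does not repair this: note that the paper explicitly says these characterizations fail for $d\geq 5$, so no purely formal argument of the type you outline can work; any proof of (v) $\Rightarrow$ (ii) must inject dimension-specific input (Skarke's result for $d=3,4$, or the computer verification the paper actually relies on), and your sketch never does.

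Two smaller points of comparison with the paper's own treatment. First, your appeal to Skarke for (ii) $\Leftrightarrow$ (iii) is misplaced: that equivalence is elementary — (ii) $\Rightarrow$ (iii) via the Section~2 proposition that $0\in([\Delta^*])^\circ$ forces $[\Delta^*]$ reflexive, and (iii) $\Rightarrow$ (ii) because $\Delta\subseteq\Delta'$ reflexive gives the lattice polytope $(\Delta')^*\subseteq[\Delta^*]$ with $0$ in its interior. The non-formal input (Skarke and/or the computations behind the paper) is needed precisely where your sketch is weakest, namely (v), and also for the "at most five steps" bound quoted in (iv), which is itself only a computationally verified Remark. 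Second, the paper gives no written proof beyond attributing the statement to Skarke's result, the Section~2 discussion, and computer checks of all \canonicalFano\ polytopes; so your skeleton for (i)--(iv) and (vi) (citing \cite[Proposition~3.4]{Bat18} for (i) $\Leftrightarrow$ (ii), \cite[Theorem~2.26]{Bat18} for (i) $\Leftrightarrow$ (vi), Proposition \ref{3-4-ref} and its consequences for (ii), and monotonicity for (iii) $\Rightarrow$ (i)) is acceptable and in the paper's spirit, but as written your proposal leaves (v) unproved rather than reduced to a citation.
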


\begin{figure}
	\centering
	\begin{subfigure}{6.5cm}
	\fbox{
	\begin{minipage}[c][6cm]{6cm}
		\centering \includegraphics[width=6cm]{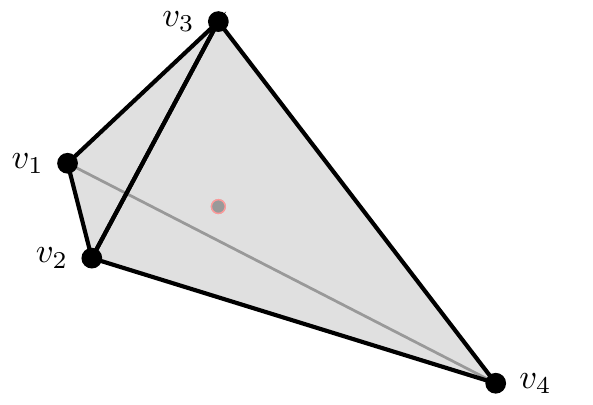}
	\end{minipage}
	}
	\caption{\href{http://www.grdb.co.uk/forms/toricf3c}{ID $547386$}.}
	\label{fig:P(1,1,1,1)}
	\end{subfigure}
\hspace{0.5cm}
	\begin{subfigure}{6.5cm}
	\fbox{
	\begin{minipage}[c][6cm]{6cm}
	\centering \includegraphics[width=6cm]{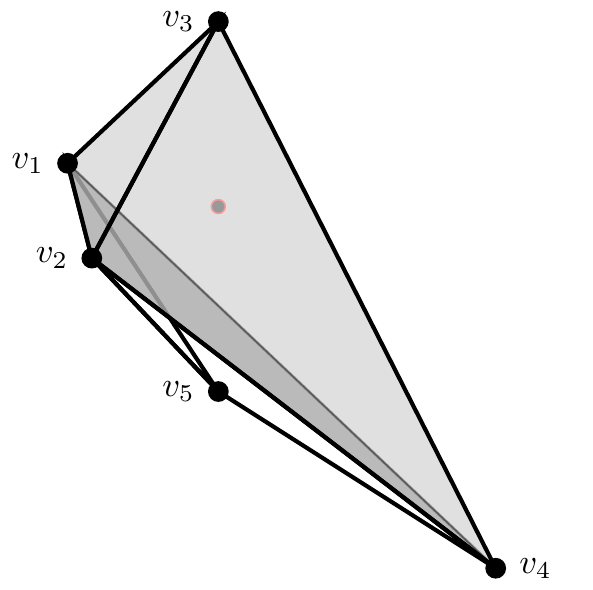}
	\end{minipage}
	}
	\caption{\href{http://www.grdb.co.uk/forms/toricf3c}{ID $547385$}.}
	\label{fig:P(1,1,1,2)}
	\end{subfigure}
	\\
	\caption[]
	{{\bf Canonical Fano {\boldmath $3$}-topes $\Delta$ with {\boldmath$ \Delta^{\FI}=\{0\} $}.} Shaded faces are occluded and
	the Fine interior $\{0\}$ is coloured grey with a red margin.
The whole polytope is the canonical hull
	$\Delta^{\can}$ as well as the reflexive hull $\Delta^{\refl}$ and the grey coloured polytope is $\Delta$.
	 	{\boldmath $(a)$} Reflexive polytope
	$\Delta= \conv(v_1,v_2,v_3,v_4)$ with
	$v_1=(1,0,0)$, $v_2=(0,1,0)$, $v_3=(0,0,1)$, $v_4= (-1,-1,-1)$, and
	$\Delta^{\refl}=\Delta^{\can}= \Delta$. All facets of $\Delta$ have lattice distance $1$ to the origin.
	{\boldmath $(b)$} Almost reflexive polytope
	$\Delta= \conv(v_1,v_2,v_3,v_4)$ with
	$v_1=(1,0,0)$, $v_2=(0,1,0)$, $v_3=(0,0,1)$, $v_4= (-1,-1,-2)$, and
	$\Delta^{\refl}=\Delta^{\can}= \conv(v_1,v_2,v_3,v_4,v_5)$ with
	$v_5=(0,0,-1)$ reflexive. The dark grey coloured facet of $\Delta$ has lattice distance $2$ and all other facets have lattice distance $1$ to the origin.
	}
	\label{fig:picturestrefct}
\end{figure}

Computations show that there exist exactly $665,\!599$ almost 
reflexive canonical Fano $3$-topes. The set of almost reflexive $3$-topes
includes all $4, \!319$ reflexive $3$-topes.
We have  shown that for any almost reflexive $3$-tope $\Delta$,
 the reflexive polytope $\Delta^{\refl}\defeq [\Delta^*]^*$ is
the smallest reflexive $3$-tope containing $\Delta$.
 We call $\Delta^{\refl}$
the {\em reflexive hull} of $\Delta$.
Thus we obtain a natural
 surjective map $ \Delta \mapsto \Delta^{\refl}$ from the
set of almost reflexive $3$-topes
 to the set of reflexive $3$-topes,
 which is the identity on the set of reflexive $3$-topes.
 The minimal surface $\mathcal S_\Delta$
is a $K3$-surface if and only if
 $\Delta$ is an almost reflexive $3$-tope. If $\Delta$ is an almost reflexive $3$-tope,
but not reflexive, then the minimal surface $\mathcal S_\Delta$
is a crepant desingularization of the Zariski closure of $\mathcal Z_\Delta$ in the Gorenstein
toric Fano threefold $X_{\Delta^{\refl}}$ defined by the reflexive hull of $\Delta$.

A generalization of the reflexive hull of almost reflexive $3$-topes
for arbitrary lattice $d$-topes with non-empty Fine interior
can be obtained using the notion of support of the Fine interior $\Delta^{\FI}$.

 \begin{dfn}
Let $\Delta \subseteq \MQ$ a lattice $d$-tope
with $\Delta^{\FI} \neq \emptyset$.
Then the set
\[ \supp(\Delta^{\FI}) \defeq \{ n \in N \, \vert \, \text{ there exists }x \in \Delta^{\FI} \; {\rm with }\;
\langle x, n \rangle = {\ord}_{\Delta}(n) +1 \} \]
is called {\em support of the Fine interior} of $\Delta$.
\end{dfn}

\begin{ex}
If $\Delta$ is a reflexive $d$-tope, then the support of the
Fine interior of $\Delta$ is the set of all non-zero lattice points
in $\Delta^* \cap N$.
\end{ex}

\begin{rem}
It is easy to show that one always has
\[ \Delta^{\FI} = \bigcap_{n \in \supp(\Delta^{\FI})} \{ x \in \MQ \, \vert \,
\langle x, n \rangle \geq {\ord}_{\Delta}(n) +1 \}. \]
\end{rem}

\begin{dfn}
Let $\Delta \subseteq \MQ$ a lattice
$d$-tope with $\Delta^{\FI} \neq \emptyset$. Then
the rational polytope
\[ \Delta^{\can}\defeq \bigcap_{n \in \supp(\Delta^{\FI})} \{ x \in \MQ \, \vert \,
\langle x, n \rangle \geq {\ord}_{\Delta}(n) \} \]
contains $\Delta$ and is called {\em canonical hull} of $\Delta$.
\end{dfn}

\begin{ex}
If $\Delta$ is an almost reflexive $3$-tope, then $\supp(\Delta^{\FI})$ is the
set $(\Delta^* \cap N) \setminus \{0\}$ of boundary lattice points
 in the reflexive
$3$-tope $[ \Delta^*]$ and the
canonical hull $\Delta^{\can}$ equals the reflexive hull
$\Delta^{\refl}$ of the
polytope $\Delta$, \emph{i.e.}, $\Delta^{\can}=\Delta^{\refl}=[\Delta^*]^*$.
In particular, in this case $\Delta^{\can}$ is always a
lattice $3$-tope.

There exist a
smooth projective toric variety $X_\Sigma$
defined by a fan $\Sigma$ whose $1$-dimensional cones
are generated
 by all lattice vectors from the finite set $\supp(\Delta^{\FI})$. Then
the minimal surface $\mathcal S_\Delta$ is a $K3$-surface which is
the Zariski closure of $\mathcal Z_\Delta$ in $ X_\Sigma$~\cite{Bat18}.
\end{ex}

\begin{ex}
Let us consider the (almost) reflexive canonical Fano $3$-tope
$\Delta = \conv(v_1,v_2,v_3,v_4) \subseteq \MQ$
(\href{http://www.grdb.co.uk/forms/toricf3c}{ID $547386$}, Figure \ref{fig:picturestrefct}\subref{fig:P(1,1,1,1)})
with vertices
$$v_1 \defeq (1,0,0), \,
v_2 \defeq (0,1,0), \,
v_3 \defeq(0,0,1), \, \text{and }
v_4 \defeq (-1,-1,-1)$$
and ${\Delta}^{\FI}=\{0\}$.
Moreover,
$$\Delta^{\refl}= \conv( 2\Delta^\circ \cap M) = \conv(\Delta \cap M) = \Delta$$
and
$${\Delta}^{\can} =[{\Delta}^*]^* = ({\Delta}^*)^*={\Delta}$$
because ${\Delta}$ is reflexive, {\emph i.e.}, $\Delta^{\refl}=\Delta^{\can}= \Delta$ reflexive
(Figure \ref{fig:picturestrefct}\subref{fig:P(1,1,1,1)}).
\end{ex}

\begin{ex}
Let us consider the almost reflexive canonical Fano $3$-tope
${\Delta} = \conv(v_1,v_2,v_3,v_4) \subseteq \MQ$
(\href{http://www.grdb.co.uk/forms/toricf3c}{ID $547385$},
Figure \ref{fig:picturestrefct}\subref{fig:P(1,1,1,2)})
with vertices
$$v_1 \defeq (1,0,0), \,
v_2 \defeq (0,1,0), \,
v_3 \defeq(0,0,1), \,\text{and }
v_4 \defeq (-1,-1,-2)$$
and ${\Delta}^{\FI}=\{0\}$.
Moreover,
$$\Delta^{\refl}= \conv((\Delta \cap M) \cup \{v_5\}) = \conv(v_1,v_2,v_3,v_4, v_5)$$
and
$${\Delta}^{\can} =[{\Delta}^*]^* = \conv(v_1,v_2,v_3,v_4, v_5)$$
with $v_5\defeq (0, 0, -1)$
because ${\Delta}$ is almost reflexive,
{\emph i.e.}, $\Delta^{\refl}=\Delta^{\can}= \Delta$ reflexive
(Figure \ref{fig:picturestrefct}\subref{fig:P(1,1,1,2)}).
\end{ex}

\section{Asymmetric Fine interior of dimension $1$} \label{sectionasymfine}

There exist exactly $9,\!020$ canonical Fano
$3$-topes $\Delta$ with $1$-dimensional Fine interior such that $0 \in N$ belongs to a facet
$\Theta \preceq [\Delta^*]$ of the lattice $3$-tope $[\Delta^*]$. This class of canonical Fano $3$-topes is characterized by
the property that the lattice $3$-tope $[2\Delta^\circ]$ has exactly $2$ interior
lattice points.

\begin{figure}
	\centering
	\begin{subfigure}{4cm}
	\centering
	\fbox{
	\begin{minipage}[c][3cm]{3cm}
		\centering \includegraphics[width=3cm]{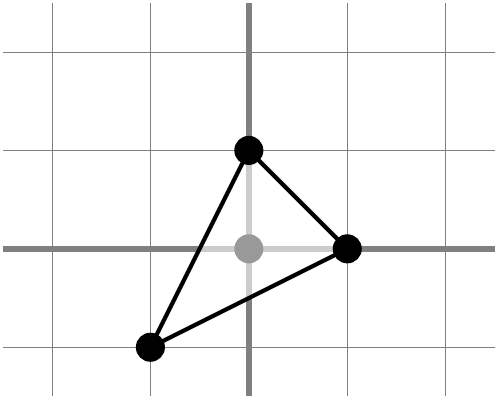}
	\end{minipage}
	}
		\caption{}
		\label{facet1}
	\end{subfigure}
	\hspace{1cm}
	\begin{subfigure}{4cm}
	\centering
	\fbox{
	\begin{minipage}[c][3cm]{3cm}
		\centering \includegraphics[width=3cm]{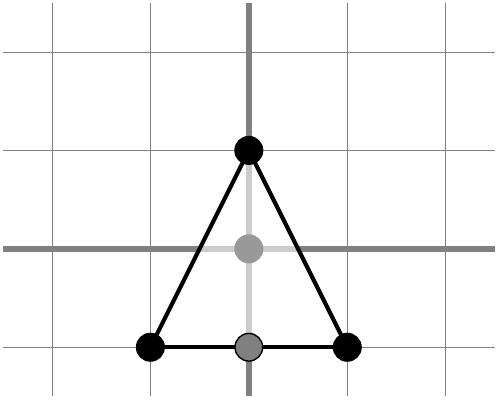}
	\end{minipage}
	}
		\caption{}
		\label{facet2}
	\end{subfigure}
	\hspace{1cm}
	\begin{subfigure}{4cm}
	\centering
	\fbox{
	\begin{minipage}[c][3cm]{3cm}
		\centering \includegraphics[width=3cm]{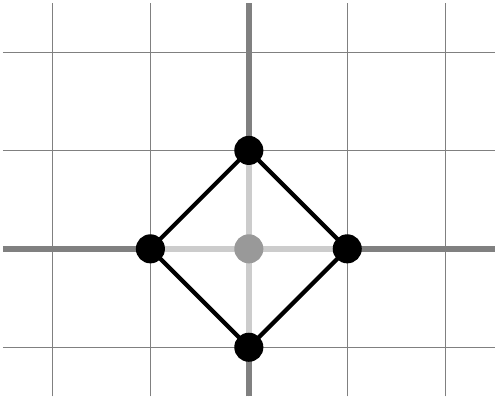}
	\end{minipage}
	}
		\caption{}
		\label{facet3}
	\end{subfigure}
	\caption[]
	{
	{\bf Reflexive Facets of {\boldmath $\Delta$} Containing {\boldmath$\pm v_\Delta$}.} Three types of reflexive facets $\theta_{\pm} \preceq \Delta$ of $\Delta$ containing $\pm v_\Delta$ for all $9,\!020+20$ canonical Fano $3$-topes $\Delta$ with
	$\dim(\Delta^{\FI})=1$. Vertices are coloured black, boundary points that are not vertices grey, and the origin light grey.
	{\boldmath $(a)$} $ \conv ((1,0),(0,1),(-1,-1))$.
 {\boldmath $(b)$} $\conv ((1,0),(-1,1),(-1,-1))$.
	{\boldmath $(c)$} $\conv ((\pm 1,0),(0,\pm 1))$.
}
	\label{fig:facet}
\end{figure}

The corresponding minimal surfaces
$\mathcal S_\Delta$ are
 {\em simply connected}
(\emph{i.e.}, have trivial fundamental group $\pi_1(\mathcal S_\Delta)$)
elliptic surfaces of Kodaira dimension $\kappa =1$.
We observed that the facet $\Theta \preceq [\Delta^*]$ is a reflexive $2$-tope
corresponding to one of the three types pictured in
Figure \ref{fig:facet}.
All $N$-lattice points on the boundary of $\Theta$ belong to
$\supp(\Delta^{\FI})$.
It was checked that for all these $3$-topes $\Delta$ the canonical hull
$\Delta^{\can}$ is again a lattice $3$-tope.
Moreover, the Fine interior $\Delta^{\FI}$ is contained in
the ray generated by the primitive lattice vector $v_\Delta \in M$
which is the primitive inward-pointing facet normal of
$\Theta$, \emph{i.e.}, $\langle x , y \rangle =0$ for all $x \in \Delta^{\FI}$, $y \in \Theta$.
 The lattice point $0 \in M$
is a vertex of $\Delta^{\FI}$. More precisely, one has
\[ \Delta^{\FI} = \conv( 0, \lambda v_\Delta ), \]
where $\lambda \in \{ 1/2, 2/3\} $.
The primitive lattice vector $v_\Delta$ is the unique interior lattice point
on a reflexive facet $\theta_{+} \preceq \Delta$ of $\Delta$ of one of the three possible types
pictured in Figure \ref{fig:facet}.
These three
reflexive polygons $\theta_{+}$
are characterized by the condition that the dual reflexive polygons $\theta^*_{+}$ are
obtained from $\theta_{+}$ (Figure \ref{fig:proj}) by enlarging
the lattice $\ZZ^2$ in the following ways:
$\ZZ^2 + \ZZ(1/3,2/3)$ (Figure \ref{fig:proj}\subref{proj1}),
$\ZZ^2 + \ZZ(1/2, 0)$ (Figure \ref{fig:proj}\subref{proj2}), and
$\ZZ^2 + \ZZ(1/2,1/2)$ (Figure \ref{fig:proj}\subref{proj3}).
Moreover, the reflexive facet $\theta_{+}$ of $\Delta$ is isomorphic to the facet
$\Theta$ of $[\Delta^*]$.
The projection
$M \to M/\ZZ v_\Delta$ of $\Delta$ or of $\theta_{+}$ along $v_\Delta$ is a reflexive
polygon of one of the three types pictured in
Figure \ref{fig:proj}, which is dual to $\theta_{+}$ and $\Theta$.
The lattice vector $v_\Delta$ defines a character
of the $3$-dimensional torus $\chi: \T^3 \to \CC^\times$.
For almost all $\alpha \in \CC^\times$, the
fiber $\chi^{-1} (\alpha)$ is an affine elliptic curve defined by a Laurent polynomial
with the reflexive Newton polytope $\Theta^* \cong \theta^*_{+}$
of one of the three types pictured in
Figure \ref{fig:proj} with the distribution shown in Table \ref{tbl:distr}.
So $\chi$ defines
birationally
an elliptic fibration.

 \begin{table}[h!]
 \centering
 \begin{tabular}{c|c|l|c|c}
 $\theta_{\pm}$ &$ \theta^*_{\pm}$ & enlarged lattice & $\#\Delta_{\text{asym}}$
 & $\# \Delta_{\text{sym}}$ \\ \toprule \midrule
Figure \ref{fig:facet}\subref{facet1} &Figure \ref{fig:proj}\subref{proj1}& $\ZZ^2 + \ZZ(1/3,2/3)$& $ 3,\!038$ & $7$ \\
Figure \ref{fig:facet}\subref{facet2} &Figure \ref{fig:proj}\subref{proj2}& $\ZZ^2 + \ZZ(1/2, 0)$ & $ 4,\!663 $ & $9$\\
Figure \ref{fig:facet}\subref{facet3} &Figure \ref{fig:proj}\subref{proj3}& $\ZZ^2 + \ZZ(1/2,1/2)$& $ 1,\!319$ & $4$
 \end{tabular}
 \vspace{1em}
 \caption[]
 {{\bf Distribution of the Reflexive Facets of {\boldmath $\Delta$} Containing {\boldmath$\pm v_\Delta$}.}
 Table contains: Type of the reflexive facet $\theta_{\pm}$ containing $\pm v_{\Delta}$, type of the dual reflexive facet $ \theta^*_{\pm}$, the enlarged lattice used to obtain $ \theta^*_{\pm}$ from ${\theta_{\pm}}$, 
 the number of canonical Fano $3$-topes $\Delta_{\text{asym}} \defeq \{\Delta \, \vert \, $1$\text{-dim. asym. } \Delta^{\FI}\}$, and 
 the number of canonical Fano $3$-topes $\Delta_{\text{sym}} \defeq \{\Delta \, \vert \, $1$\text{-dim. sym. } \Delta^{\FI}\}$
 with respect to the facet type of $\theta_{\pm}$ pictured in Figure~\ref{fig:facet}.
 }
 \label{tbl:distr}
 \end{table}

\begin{figure}
	\centering
	\begin{subfigure}{4cm}
	\centering
	\fbox{
	\begin{minipage}[c][3cm]{3cm}
		\centering \includegraphics[width=3cm]{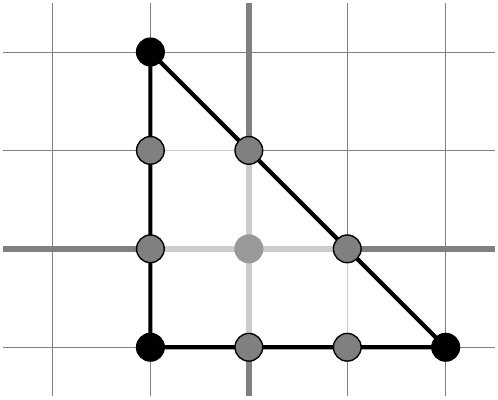}
	\end{minipage}
	}
		\caption{}
		\label{proj1}
	\end{subfigure}
	\hspace{1cm}
	\begin{subfigure}{4cm}
	\centering
	\fbox{
	\begin{minipage}[c][3cm]{3cm}
		\centering \includegraphics[width=3cm]{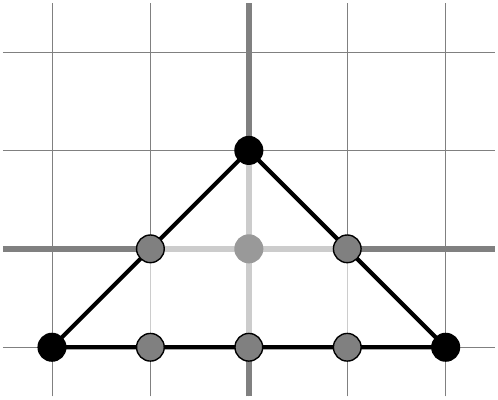}
	\end{minipage}
	}
		\caption{}
		\label{proj2}
	\end{subfigure}
	\hspace{1cm}
	\begin{subfigure}{4cm}
	\centering
	\fbox{
	\begin{minipage}[c][3cm]{3cm}
		\centering \includegraphics[width=3cm]{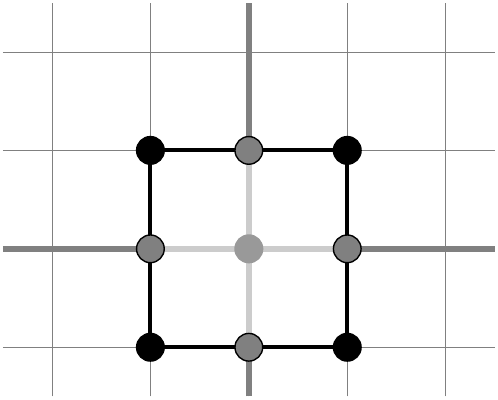}
	\end{minipage}
	}
		\caption{}
		\label{proj3}
	\end{subfigure}
	\caption[]
	{{\bf Reflexive Projection Polytopes.} Three types of reflexive polytopes
	obtained via a projection of $\Delta$ along $\pm v_\Delta$
	for all $9,\!020+20$ canonical Fano $3$-topes $\Delta$ with
	$\dim(\Delta^{\FI})=1$ Vertices are coloured black, boundary points that are not vertices grey, and the origin light grey. {\boldmath $(a)$} $\conv ((-1,2),(-1,-1),(2,-1))$. {\boldmath $(b)$} $\conv ((-2,-1),(0,1),$ $(2,-1))$. {\boldmath $(c)$} $\conv ((\pm 1,\pm 1))$.
}
	\label{fig:proj}
\end{figure}

\begin{ex}
Let $\Delta \subseteq \MQ$ be a canonical Fano $3$-tope given as the convex hull
$$v_1 \defeq (2,3,8), \,
v_2 \defeq (1,0,0), \,
v_3 \defeq (0,1,0), \,\text{and }
v_4 \defeq (-1,-1,-1)$$
(\href{http://www.grdb.co.uk/forms/toricf3c}{ID $547324$}, Figure \ref{fig:asymFine1}\subref{fig:547324}, Table \ref{tbl:corti} and \ref{tbl:corti1}).
 Then
\[ \Delta^{\FI} = {\conv}((0,0,0), (1/2,1/2,1)) = \conv( 0, 1/2 \cdot v_\Delta ), \]
where $v_\Delta = (1,1,2)$. One has $v_1 + 2v_2 + v_3 = 4 v_\Delta$. Therefore,
$v_\Delta$ is the interior lattice point of the reflexive facet $\theta_{+}$ of
$\Delta$
 (Figure \ref{fig:facet}\subref{facet2})
with vertices
$v_1,v_2, v_3$ and
the images $\overline{v}_1, \overline{v}_2, \overline{v}_3$ of $v_1,v_2,v_3$ in
$M/\ZZ v_\Delta$ are vertices of the dual reflexive triangle $\theta^*_{+}$
 (Figure \ref{fig:proj}\subref{proj2})
satisfying the relation
\[ \overline{v}_1 + 2 \overline{v}_2 + \overline{v}_3 =0. \]

To compute the canonical hull $\Delta^{\can}$ of $\Delta$, we obtain
$\supp(\Delta^{\FI}) = \{s_i \, \vert \, 1 \leq i \leq 18 \}$
with
$s_1 \defeq (-1, -1, 1)$,
$s_2 \defeq (-1, -1, 2)$,
$s_3 \defeq (-1, -1, 3)$,
$s_4 \defeq (-1, 0, 1)$,
$s_5 \defeq (-1, 0, 2)$,
$s_6 \defeq (-1, 1, 0)$,
$s_7 \defeq (-1, 1, 1)$,
$s_8 \defeq (-1, 2, 0)$,
$s_9 \defeq (-1, 3, -1)$,
$s_{10} \defeq (0, -1, 1)$,
$\ldots$,
$s_{18} \defeq (-2, -2, 1)$, which leads to
$$\Delta^{\can} =\conv(v_1,v_2,v_3,v_4, v_5)$$
with $v_5\defeq (0, 1,4)$ (Figure \ref{fig:asymFine1}\subref{fig:547324}).
\end{ex}

\begin{figure}
	\centering
	\begin{subfigure}{6.5cm}
	\fbox{
	\begin{minipage}[c][6cm]{6cm}
		\centering \includegraphics[height=6cm]{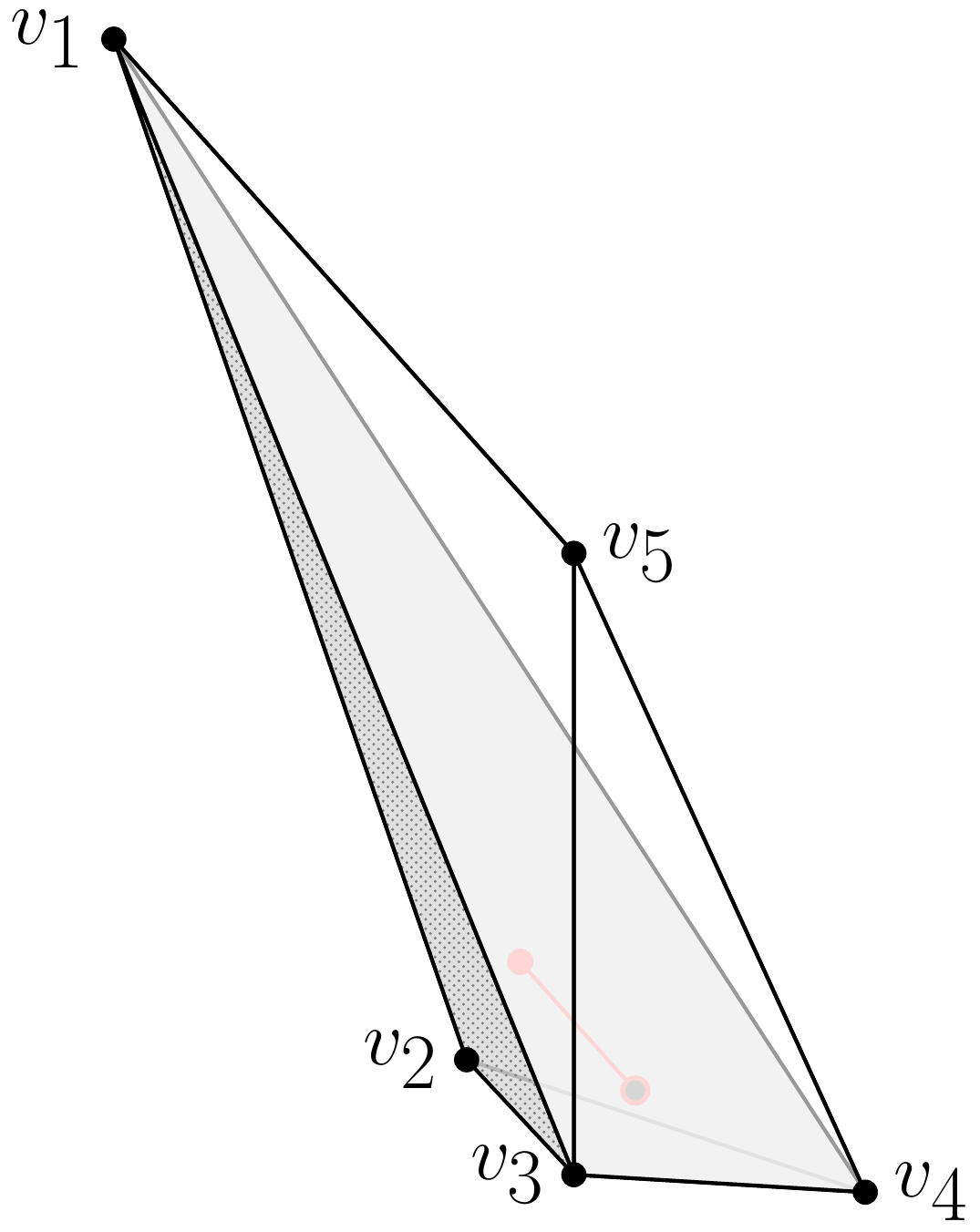}
	\end{minipage}
	}
	\caption{\href{http://www.grdb.co.uk/forms/toricf3c}{ID $547324$}}
	\label{fig:547324}
	\end{subfigure}
	\hspace{0.5cm}
	\begin{subfigure}{6.5cm}
	\fbox{
	\begin{minipage}[c][6cm]{6cm}
		\centering \includegraphics[height=6cm]{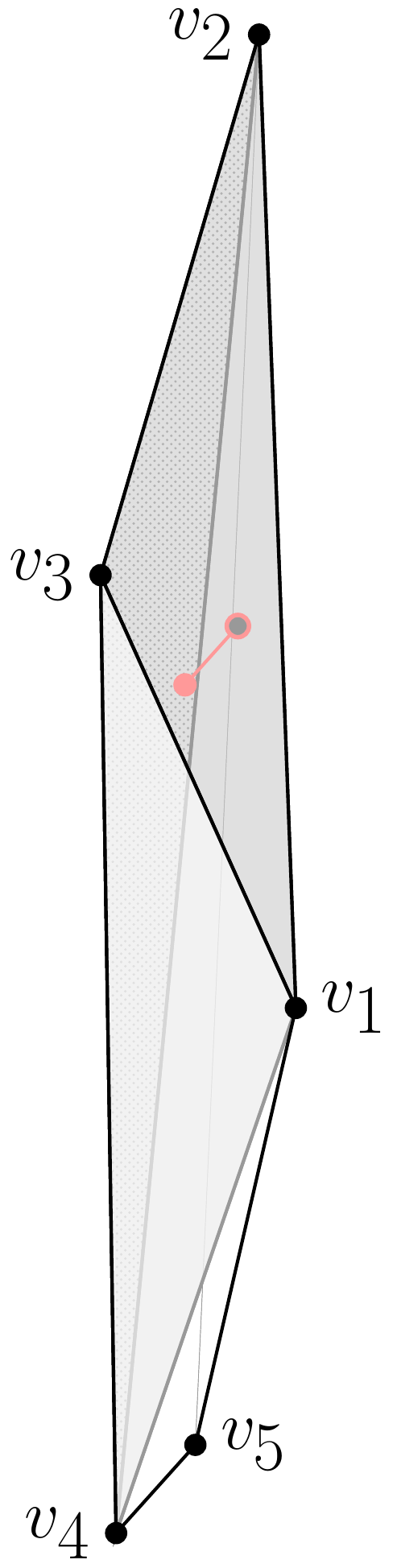}
	\end{minipage}
	}
	\caption{\href{http://www.grdb.co.uk/forms/toricf3c}{ID $547323$}}
	\label{fig:547323}
	\end{subfigure}

	\caption[]
	{{\bf Canonical Fano {\boldmath $3$}-topes with Asymmetric
Fine Interior of Dimension {\boldmath $1$.}} Shaded faces are occluded. The Fine interior is coloured red, the origin grey with a red margin, and the facet $\theta_{+}$ grey dotted.
	{\bf (a)} 	The whole polytope is $\Delta= \conv(v_1,v_2,v_3,v_4)$ with
	$v_1=(2,3,8)$, $v_2=(1,0,0)$, $v_3=(0,1,0)$, $v_4= (-1,-1,-1)$. Moreover,
	$\Delta^{\FI}= {\conv}((0,0,0), (1/2,1/2,1))$, $\theta_{+} = \conv(v_1,v_2,v_3)$, and
	$\Delta^{\can} = \conv(v_1, v_2, v_3, v_4, v_5)$ with $v_5 = (0, 1,4)$.
		{\bf (b)} 	The whole polytope is $\Delta= \conv(v_1,v_2,v_3,v_4)$ with
	$v_1=(-1,1,-2)$, $v_2=(1,-2,3)$, $v_3=(1,0,0)$, $v_4= (-2,5,-3)$. Moreover,
	$\Delta^{\FI}={\conv}((0,0,0), (0,2/3,0))$ and $\theta_{+}=\conv(v_2,v_3,v_4)$, and
	$\Delta^{\can} = \conv(v_1, v_2, v_3, v_4, v_5)$ with $v_5 = (-2, 4,-3)$.
	}
	\label{fig:asymFine1}
\end{figure}

\begin{ex}
Let $\Delta \subseteq \MQ$ be canonical Fano $3$-tope given as the convex hull
 $$v_1 \defeq (-1,1,-2), \,
v_2 \defeq (1,-2,3), \,
v_3 \defeq (1,0,0), \,\text{and }
v_4 \defeq (-2,5,-3)$$
(\href{http://www.grdb.co.uk/forms/toricf3c}{ID $547323$}, Figure \ref{fig:asymFine1}\subref{fig:547323}, Table \ref{tbl:corti} and \ref{tbl:corti1}).
 Then
\[ \Delta^{\FI} = {\conv}((0,0,0), (0,2/3,0)) = \conv( 0, 2/3 \cdot v_\Delta ), \]
where $v_\Delta = (0,1,0)$. One has $v_2 + v_3 + v_4 = 3 v_\Delta$. Therefore,
$v_\Delta$ is the interior lattice point of the reflexive facet $\theta_{+}$ of $\Delta$
 (Figure \ref{fig:facet}\subref{facet1})
with vertices
$v_2,v_3, v_4$ and
the images $\overline{v}_2, \overline{v}_3, \overline{v}_4$ of $v_2,v_3,v_4$ in
$M/\ZZ v_\Delta$ are vertices of the dual reflexive triangle $\theta^*_{+}$
 (Figure \ref{fig:proj}\subref{proj1})
satisfying the relation
\[ \overline{v}_2 + \overline{v}_3 + \overline{v}_4 =0. \]

To compute the canonical hull $\Delta^{\can}$ of $\Delta$, we obtain $\supp(\Delta^{\FI}) = \{s_i \, \vert \, 1 \leq i \leq 20 \}$
with
$s_1 \defeq (-3, -3, -2)$,
$s_2 \defeq (-1, 0, 0)$,
$s_3 \defeq (-1,0,1)$,
$s_4 \defeq (-1,1,1)$,
$s_5 \defeq (-1,2,2)$,
$s_6 \defeq (-1,3,2)$,
$s_7 \defeq (-1,4,3)$,
$s_8 \defeq (-1,6,4)$,
$s_9 \defeq (0,1,1)$,
$s_{10} \defeq (0,2,1)$,
$\ldots$,
$s_{20} \defeq (4,1,-1)$, which leads to
$$\Delta^{\can} =\conv(v_1,v_2,v_3,v_4, v_5)$$
with $v_5\defeq (-2,4,-3)$ (Figure \ref{fig:asymFine1}\subref{fig:547323}).
\end{ex}

\begin{rem}
The detailed information about a small selection of the $9,\!020$ canonical Fano $3$-topes with $\dim(\Delta^{\FI})=1$ and $0 \in \vertex(\Delta^{\FI})$ can be found in Appendix \ref{appendixA}.
To be precise, it is listed in Table \ref{tbl:corti}, \ref{tbl:corti0}, and \ref{tbl:corti1} and can be viewed in~\cite[Appendix \ref{appendixA}, Figure A$1$]{Sch18}.
\end{rem}

\section{Symmetric Fine interior of dimension $1$} \label{sectionsymfine}

There exist exactly $20$ canonical Fano $3$-topes $\Delta$ such that
$0$ is the center of $1$-dimensional Fine interior $\Delta^{\FI}$. In this case,
 $\mathcal S_\Delta$ is an elliptic surface of Kodaira dimension $\kappa = 1$ with non-trivial
 fundamental group $\pi_1(\mathcal S_\Delta)$ of order $2$ or $3$.
Computations show that one always has
 $\Delta =\Delta^{\can}$ and
\[ \Delta^{\FI} = \conv( -\lambda v_\Delta, \lambda v_\Delta ) \]
with $\lambda = \frac12 $ if and only if $\left \vert \pi_1(\mathcal S_\Delta) \right\vert =2$ and
\[ \Delta^{\FI} = \conv( -\mu v_\Delta, \mu v_\Delta ) \]
with $\mu = \frac{2}{3}$
if and only if $\left \vert \pi_1(\mathcal S_\Delta) \right\vert =3$.
The primitive lattice vectors $\pm v_\Delta$ are the two unique interior lattice points
in two reflexive facets $\theta_{\pm} \preceq \Delta$
of one of the three possible types
pictured in Figure \ref{fig:facet}.
The reflexive facets $\theta_{\pm}$ of $\Delta$ are isomorphic to the facet
$\Theta$ of $[\Delta^*]$.
The projections
$M \to M/\ZZ (\pm v_\Delta)$ of $\Delta$ or of $\theta_{\pm}$ along $\pm v_\Delta$ reveal a reflexive
polygon of one of the three types pictured in
Figure \ref{fig:proj}, which is dual to $\theta_{\pm}$ and $\Theta$.
The lattice vector $v_\Delta$ defines a character
of the $3$-dimensional torus $\chi: \T^3 \to \CC^\times$.
For almost all $\alpha \in \CC^\times$, the
fiber $\chi^{-1} (\alpha)$ is an affine elliptic curve defined by a Laurent polynomial
with the reflexive Newton polytope $\Theta^* \cong \theta^*_{\pm}$
of one of the three types pictured in
Figure \ref{fig:proj} with the distribution shown in Table \ref{tbl:distr}.
So $\chi$ defines birationally an elliptic fibration.
The vertex sets of $\Delta$ and these reflexive facets are related via $\vertex(\Delta) =
\vertex(\theta_{+}) \cup \vertex(\theta_{-})$. Moreover,
every edge of $\Delta$ is either an edge of $\theta_{+}$ or $\theta_{-}$ of these two facets or it is parallel to $v_\Delta$.

\begin{ex}
Let $\Delta \subseteq \MQ$ be canonical Fano $3$-tope given as the convex hull
$$v_1 \defeq (0,1,0), \,
v_2 \defeq (2,1,1), \,
v_3 \defeq (-2,-3,-5), \,\text{and }
v_4 \defeq (2,1,9)$$
(\href{http://www.grdb.co.uk/forms/toricf3c}{ID $547393$}, Figure \ref{fig:symFine1}\subref{fig:547393}, Table \ref{tbl:20} and \ref{tbl:200}).
Then
\[ \Delta^{\FI} = {\conv}((0,0,-1/2), (0,0, 1/2)) =
( -\lambda v_\Delta , \lambda v_\Delta) \]
with $\lambda = \frac12, $
where $v_\Delta = (0,0,1)$. One has
$2v_1 + v_3 + v_4 = 4 v_\Delta$ and
$2v_1 + v_2 + v_3 = 4(- v_\Delta)$. Therefore,
$v_\Delta$ is the interior lattice point of the reflexive facet $\theta_{+} = \theta_{134}$ of
$\Delta$ and $-v_\Delta$ is the interior lattice point of the
reflexive facet $\theta_{-}= \theta_{123}$ of
$\Delta$ (Figure \ref{fig:facet}\subref{facet2}).
The images
$\overline{v}_1, \overline{v}_3, \overline{v}_4$ of $v_1,v_3,v_4$ in $M/\ZZ v_\Delta$
and the images
$\overline{v}_1, \overline{v}_2, \overline{v}_3$ of $v_1,v_2,v_3$
in $M/\ZZ (-v_\Delta)$ are vertices of the dual reflexive triangle $\theta^*_{\pm}$
 (Figure \ref{fig:proj}\subref{proj2})
satisfying the relation
\[ 2 \overline{v}_1 + \overline{v}_3 + \overline{v}_4 =0 \]
and
\[ 2 \overline{v}_1 + \overline{v}_2 + \overline{v}_3 =0, \]
respectively.

To compute the canonical hull $\Delta^{\can}$ of $\Delta$, we obtain $\supp(\Delta^{\FI}) = \{s_i \, \vert \, 1 \leq i \leq 6 \}$
with
$s_1 \defeq (-1, -2, 2)$,
$s_2 \defeq (-1, 1, 0)$,
$s_3 \defeq (0, -1, 0)$,
$s_4 \defeq (1, -1, 0)$,
$s_5 \defeq (2, -1, 0)$, and
$s_6 \defeq (9, -2, -2)$,
which leads to
$\Delta^{\can} = \Delta.$
\end{ex}

\begin{ex}
Let $\Delta \subseteq \MQ$ be canonical Fano $3$-tope given as the convex hull
$$v_1 \defeq (-4,2,9), \,
v_2 \defeq (1,0,0), \,
v_3 \defeq (0,1,0), \,\text{and }
v_4 \defeq (7,-6,-18)$$
(\href{http://www.grdb.co.uk/forms/toricf3c}{ID $547409$}, Figure \ref{fig:symFine1}\subref{fig:547409}, Table \ref{tbl:20} and \ref{tbl:200}).
Then
\[ \Delta^{\FI} = {\conv}( (-2/3, 2/3, 2),(2/3, -2/3, -2)) =
( -\mu v_\Delta , \mu v_\Delta) \]
with $\mu = \frac23, $
where $v_\Delta = (1,-1,-3)$. One has
$v_1+ v_2 +v_3 = -3 v_\Delta$ and
$v_1 + v_3 + v_4 = -3(- v_\Delta)$. Therefore,
$v_\Delta$ is the interior lattice point of the reflexive facet $\theta_{+} = \theta_{123}$ of
$\Delta$ and $-v_\Delta$ is the interior lattice point of the
reflexive facet $\theta_{-}= \theta_{134}$ of
$\Delta$
 (Figure \ref{fig:facet}\subref{facet2}).
The images
$\overline{v}_1, \overline{v}_2, \overline{v}_3$ of $v_1,v_2,v_3$ in $M/\ZZ v_\Delta$
and the images
$\overline{v}_1, \overline{v}_3, \overline{v}_4$ of $v_1,v_3,v_4$
in $M/\ZZ (-v_\Delta)$ are vertices of the dual reflexive triangle $\theta^*_{\pm}$
 (Figure \ref{fig:proj}\subref{proj2})
satisfying the relation
\[ \overline{v}_1 + \overline{v}_2 + \overline{v}_3 =0, \]
and
\[ \overline{v}_1 + \overline{v}_3 + \overline{v}_4 =0, \]
respectively.

To compute the canonical hull $\Delta^{\can}$ of $\Delta$, we obtain $\supp(\Delta^{\FI}) = \{s_i \, \vert \, 1 \leq i \leq 5 \}$
with
$s_1 \defeq (-3,-3,-1)$,
$s_2 \defeq (-1, -1, 0)$,
$s_3 \defeq (-1,2, -1)$,
$s_4 \defeq (2, -1, 1)$, and
$s_5 \defeq (15,-3,7)$,
which leads to
$\Delta^{\can} = \Delta$.
\end{ex}

\begin{figure}
	\centering
	\begin{subfigure}{6.5cm}
	\fbox{
	\begin{minipage}[c][6cm]{6cm}
		\centering \includegraphics[height=6cm]{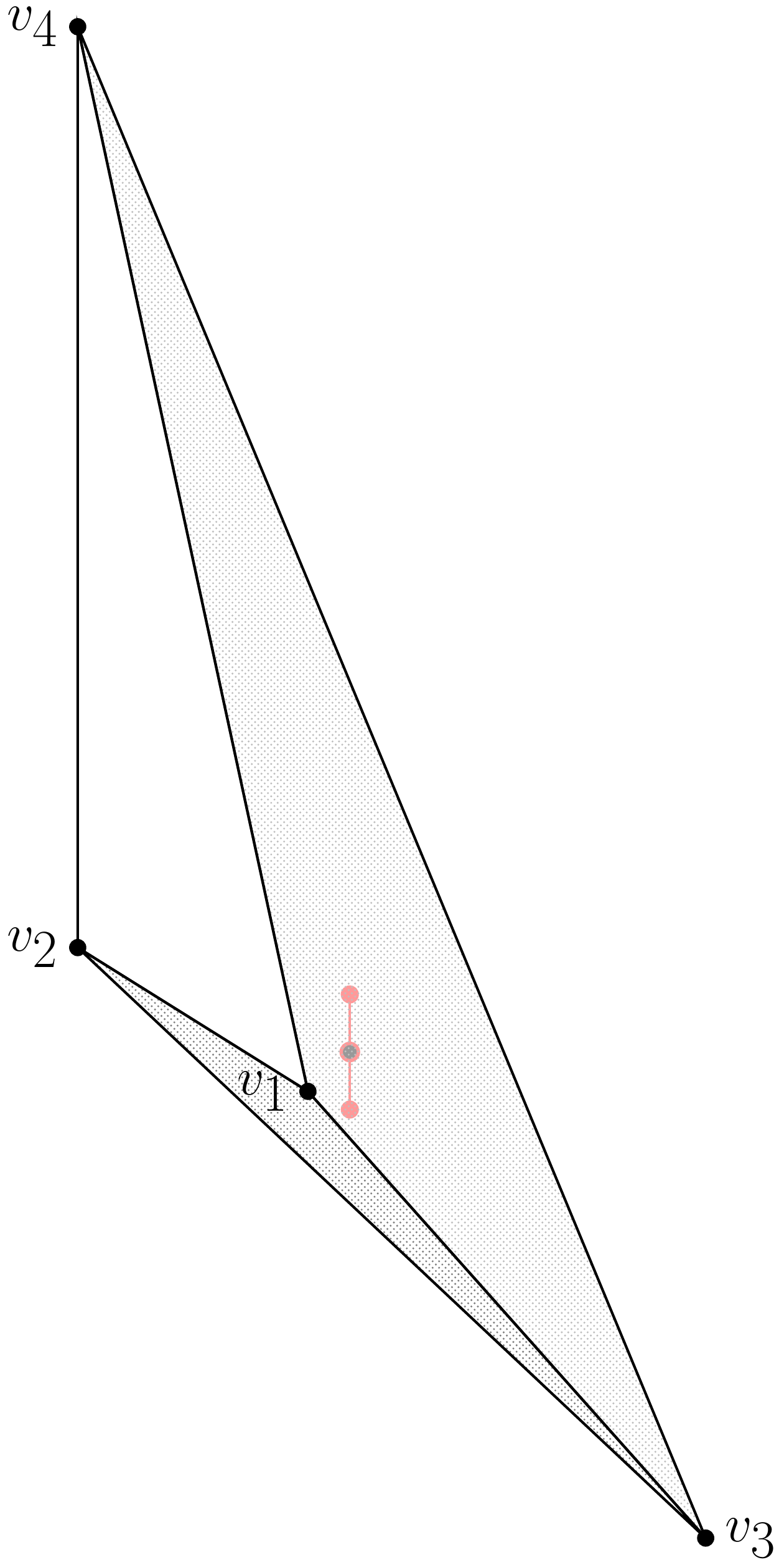}
	\end{minipage}
	}
	\caption{\href{http://www.grdb.co.uk/forms/toricf3c}{ID $547393$}}
	\label{fig:547393}
	\end{subfigure}
	\hspace{0.5cm}
	\begin{subfigure}{6.5cm}
	\fbox{
	\begin{minipage}[c][6cm]{6cm}
		\centering \includegraphics[height=6cm]{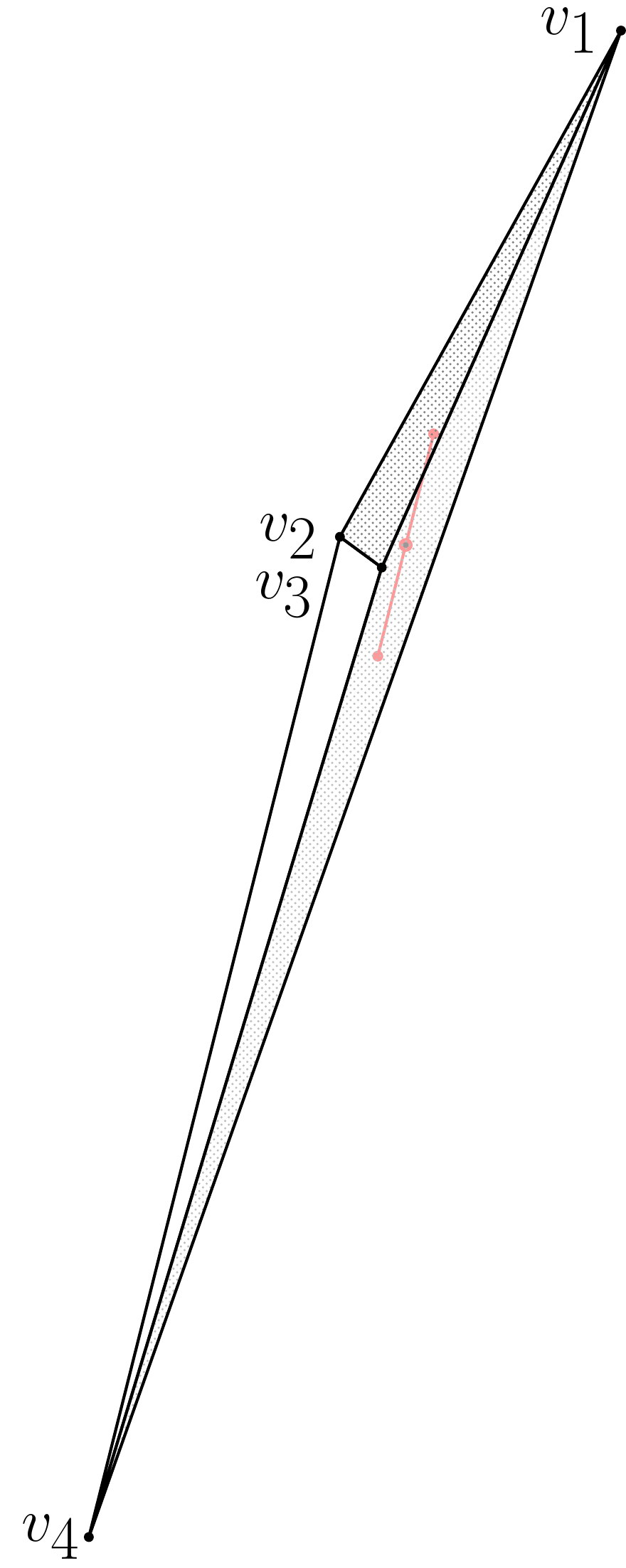}
	\end{minipage}
	}
	\caption{\href{http://www.grdb.co.uk/forms/toricf3c}{ID $547409$}}
	\label{fig:547409}
	\end{subfigure}

	\caption[]
	{ {\bf Canonical Fano {\boldmath $3$}-topes with Symmetric
Fine Interior of Dimension {\boldmath $1$.}} Shaded faces are occluded. The Fine interior is coloured red, the origin grey with a red margin, and the facets $\theta_{\pm}$ grey dotted.
	{\bf (a)} 	The whole polytope is $\Delta= \conv(v_1,v_2,v_3,v_4)$ with
	$v_1=(1,0,0)$, $v_2=(2,1,1)$, $v_3=(-2,-3,-5)$, $v_4= (2,1,9)$.
	Moreover,
	$\Delta^{\FI}= {\conv}((0,0,-1/2), (0,0,1/2))$, $\theta_{+} = \conv(v_1,v_3,v_4)$,
	$\theta_{-} = \conv(v_1,v_2,v_3)$, and $\Delta^{\can} = \Delta$.
		{\bf (b)} 	The whole polytope is $\Delta= \conv(v_1,v_2,v_3,v_4)$ with
	$v_1=(-4,2,9)$, $v_2=(1,0,0)$, $v_3=(0,1,0)$, $v_4= (7,-6,-18)$. Moreover,
	$\Delta^{\FI}= {\conv}((-2/3,2/3,2),(2/3,-2/3,-2))$, $\theta_{+} = \conv(v_1,v_2,v_3)$,
	$\theta_{-} = \conv(v_1,v_3,v_4)$, and $\Delta^{\can} = \Delta$.
	}
	\label{fig:symFine1}
\end{figure}

\begin{rem}
The detailed information about all $20$ canonical
Fano $3$-topes with $\dim(\Delta^{\FI})=1$ and $0 \in {(\Delta^{\FI})}^{\circ}$ can be found in
 Appendix \ref{appendixA}.
To be precise, it is listed in Table \ref{tbl:20} and \ref{tbl:200}
and can be viewed in~\cite[Appendix \ref{appendixA}, Figure A$2$]{Sch18}.
\end{rem}

\section{Fine interior of dimension $3$}

There exist $49$ canonical Fano $3$-topes $\Delta$ such that $\dim (\Delta^{\FI}) = 3$.
Exactly $3$ of these polytopes $\Delta$ define minimal surface $\mathcal S_\Delta$ with non-trivial
fundamental group of order $2$ and
$K^2 =2$.
For these $3$ polytopes one has $\Delta = \Delta^{\can}$. The surfaces
$\mathcal S_\Delta$ were investigated by Todorov~\cite{Tod81} as well as Catanese and Debarre~\cite{CD89}.

The remaining $46$ canonical Fano $3$-topes $\Delta$ define simply connected
minimal surfaces $\mathcal S_\Delta$ with $K^2 =1$.
These surfaces were investigated by Kanev~\cite{Kan77},
Catanese~\cite{Cat79}, and Todorov~\cite{Tod80}.
Among these $46$ canonical Fano $3$-topes there exist exactly $26$ polytopes
$\Delta$ such that $\Delta = \Delta^{\can}$.

\begin{exnopar}[{\cite{Kan77}}] {\em
Let $M \subseteq \QQ^4$ be the $3$-dimensional affine lattice defined by
 \[ M \defeq \{ ( m_1,m_2, m_3,m_4 ) \in \ZZ^4 \, \vert \, m_1 + m_2 + m_3 + 2 m_4 = 6, \;
 m_2 + 2 m_3 \equiv 0\ \modb{3}\} \]
and $\Delta' \subseteq \MQ$ be the convex hull of $4$ lattice points
\[ (6,0,0,0),\, (0,6,0,0), \,(0,0,6,0), \,\text{and }(0,0,0,3) \in M. \]
Then $(\Delta')^{\FI}$ is the $3$-dimensional rational simplex
\[ {\conv}((2,1,1,1), (1,2,1,1), (1,1,2,1), (1,1,1,3/2)) \]
and $(\Delta')^{\FI} \cap M = \{ (2,1,1,1) \}$.

The canonical Fano $3$-tope
$\Delta'$ is the Newton polytope of
the $\mu_3$-cyclic quotient $\overline{Z}_{\Delta'}$ of the projective surface of degree $6$ defined by the
polynomial $z_1^6 + z_2^6 + z_3^6 + z_4^3 =0$
in the weighted projective space
$\PP(1,1,1,2)$, where the cyclic group $\mu_3$ acts via $(z_1: z_2: z_3:z_4) \mapsto
(z_1 : \varepsilon z_2: \varepsilon^2 z_3: z_4)$. The single interior lattice point in $\Delta'$
corresponds to the monomial $z_1^2z_2z_3z_4$. The surface $\overline{Z}_{\Delta'}$ has
$3$ cyclic quotient singularities of type $A_2$. The minimal desingularization $\mathcal S_{\Delta'}$
of $\overline{Z}_{\Delta'}$ is simply connected
surface of general type with $K^2 =1$.

One can identify $\Delta'$ with
the canonical Fano $3$-simplex $\Delta$ given as the convex hull
$$v_1 \defeq (1,0,0), \,
v_2 \defeq (-2,-4,-5), \,
v_3 \defeq (1,2,4), \,\text{and }
v_4 \defeq (1,4,2)$$
(\href{http://www.grdb.co.uk/forms/toricf3c}{ID $547444$}, Figure \ref{fig:fine3}\subref{fig:547444}, Table \ref{tbl:491}, \ref{tbl:492}, and \ref{tbl:493}).
The primitive inward-pointing facet normals of the facets $\theta_{124}$, $\theta_{234}$, $\theta_{123}$, and $\theta_{134}\preceq \Delta$ of this simplex $\Delta$ are
$$n_1 \defeq (-2,-1,2), \,
n_2 \defeq (5,-1,-1), \,
n_3 \defeq (-1,2,-1), \,\text{and }
n_4 \defeq (-1,0,0),$$
respectively.
They satisfy the relation
\[ n_1 + n_2 + n_3 + 2n_4 = 0. \]

To compute the canonical hull $\Delta^{\can}$ of $\Delta$, we obtain $\supp(\Delta^{\FI}) = \{s_i \, \vert \, 1 \leq i \leq 6 \}$
with
$s_1 \defeq (-2,-1,2)$,
$s_2 \defeq (-1, 0,0)$,
$s_3 \defeq (-1,2,-1)$,
$s_4 \defeq (1,1,-1)$,
$s_5 \defeq (3,0,-1)$, and
$s_6 \defeq (5,-1,-1)$,
which leads to
$\Delta^{\can} = \Delta$.}
\end{exnopar}

\begin{figure}
	\centering
	\begin{subfigure}{6.5cm}
	\fbox{
	\begin{minipage}[c][6cm]{6cm}
		\centering \includegraphics[width=6cm]{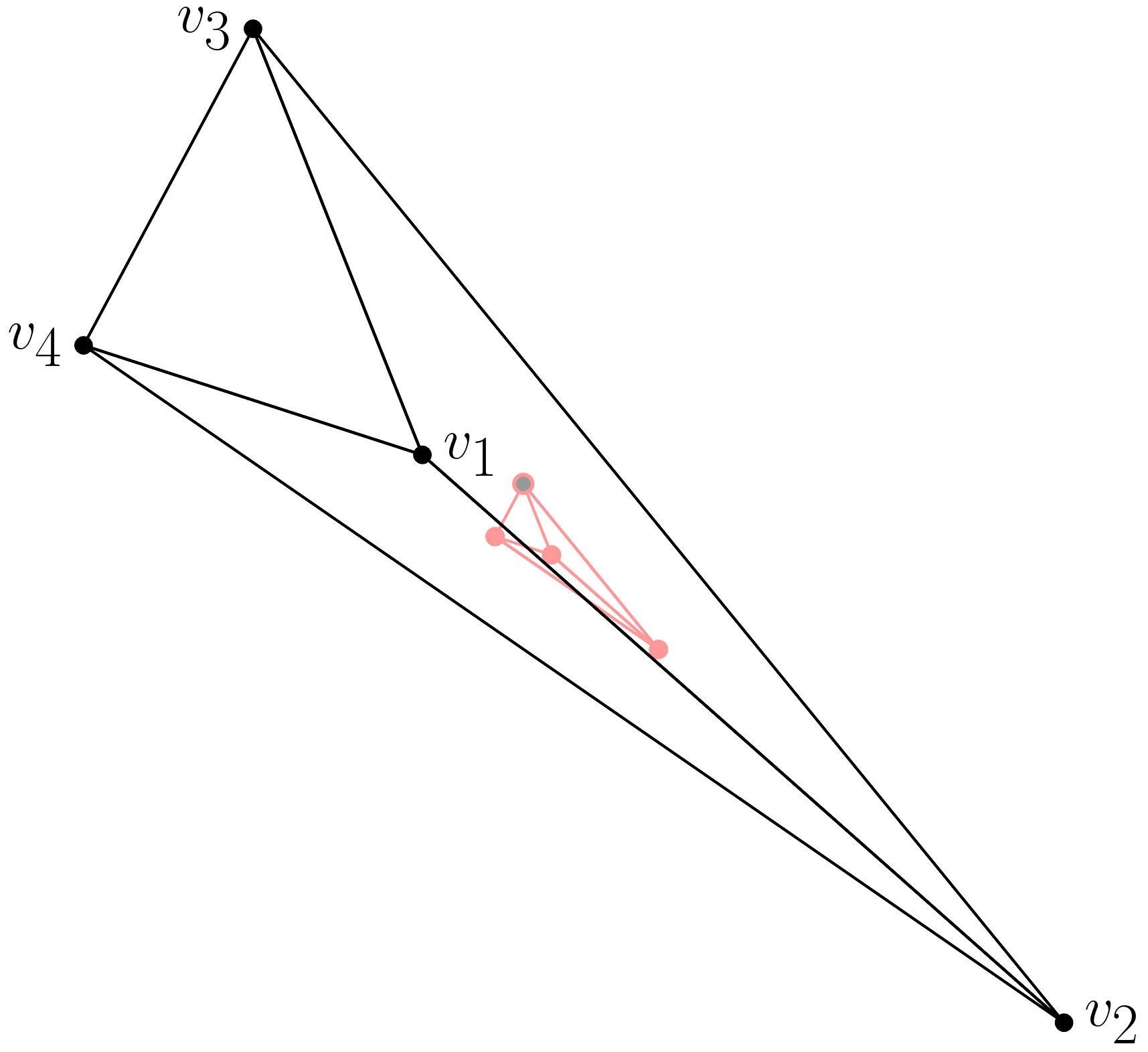}
	\end{minipage}
	}
	\caption{\href{http://www.grdb.co.uk/forms/toricf3c}{ID $547444$}}
	\label{fig:547444}
	\end{subfigure}
	\hspace{0.5cm}
	\begin{subfigure}{6.5cm}
	\fbox{
	\begin{minipage}[c][6cm]{6cm}
		\centering \includegraphics[width=6cm]{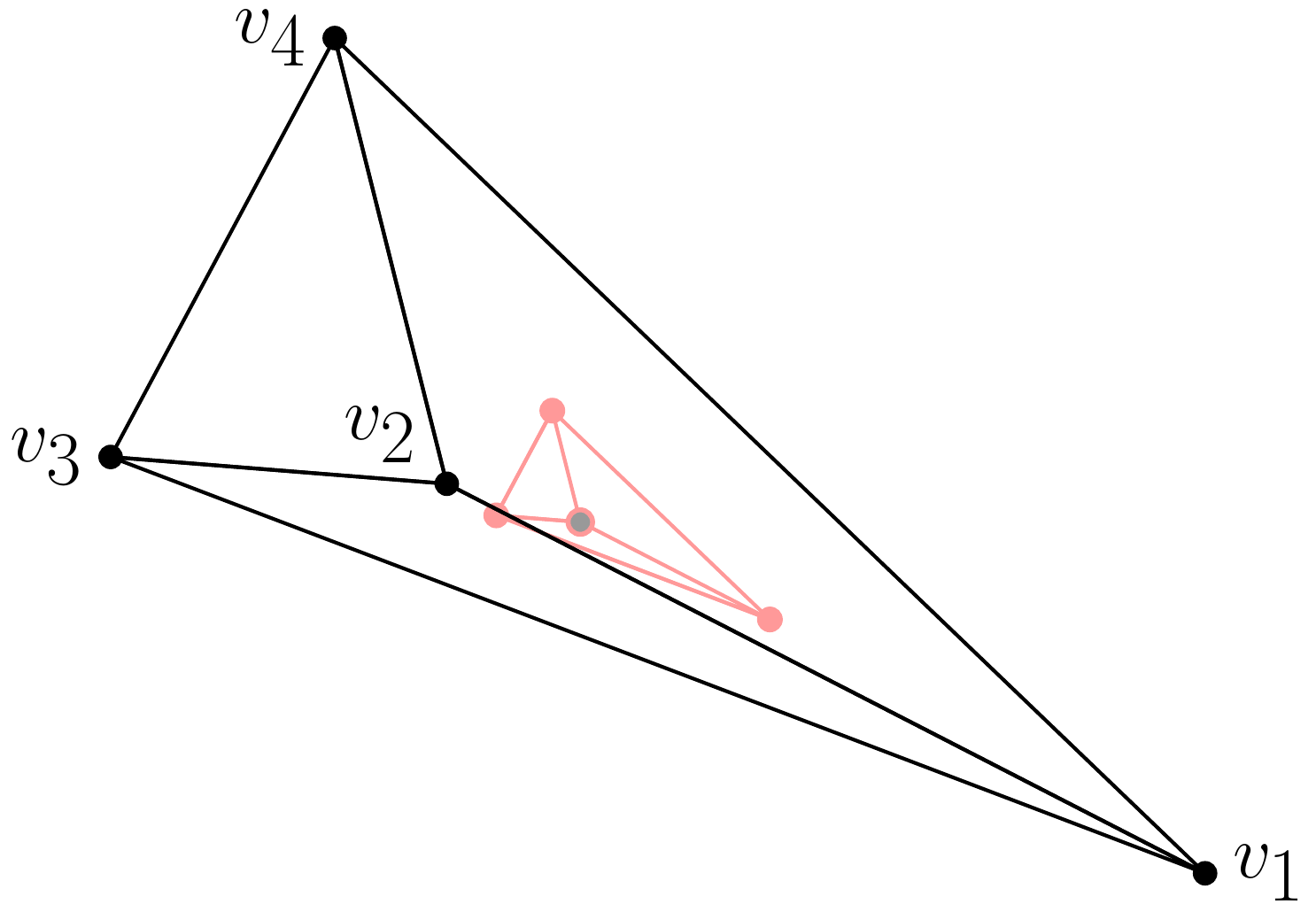}
	\end{minipage}
	}
	\caption{\href{http://www.grdb.co.uk/forms/toricf3c}{ID $547465$}}
	\label{fig:547465}
	\end{subfigure}

	\caption[]
	{{\bf Canonical Fano {\boldmath$3$}-topes with Fine Interior of Dimension {\boldmath$3$}.} Shaded faces are occluded. The Fine interior is coloured red and the origin grey with a red margin.
	{\bf (a)} 	The whole polytope is $\Delta= \conv(v_1,v_2,v_3,v_4)$ with
	$v_1= (1,0,0)$, $v_2 = (-2,-4,-5)$, $v_3 = (1,2,4)$, $v_4 = (1,4,2)$.
	Moreover,
	$\Delta^{\FI}= {\conv}( (0, 0, 0), (-1/2, -1, -3/2), (0, -1/3, -2/3), (0, 1/3, -1/3) )$ and
	$\Delta^{\can} = \Delta$.
		{\bf (b)} 	The whole polytope is $\Delta= \conv(v_1,v_2,v_3,v_4)$ with
	$v_1 = (-3,-2,-2)$, $v_2 = (1,0,0)$, $v_3 = (1,3,1)$, $v_4 = (1,1,3)$. Moreover,
	$\Delta^{\FI}= {\conv}( (0, 0, 0), (-1, -1/2, -1/2), (0, 3/4, 1/4), (0, 1/4, 3/4))$ and
	$\Delta^{\can} = \Delta$.
	}
	\label{fig:fine3}
\end{figure}

\begin{exnopar}[{\cite{Tod81}}] {\em
Let $M \subseteq \QQ^4$ be the $3$-dimensional affine lattice defined by
 \[ M \defeq \{ ( m_1,m_2, m_3,m_4 ) \in \ZZ^4 \, \vert \, m_1 + m_2 + 2m_3 + 2 m_4 = 8, \;
 3m_2 + m_3 +3 m_4 \equiv 0\ \modb{4}\} \]
and ${\Delta'} \subseteq \MQ$ be the convex hull of $4$ lattice points
\[ (8,0,0,0), \, (0,8,0,0), \,(0,0,4,0), \,\text{and }(0,0,0,4) \in M. \]
Then $({\Delta'})^{\FI}$ is the $3$-dimensional rational simplex
\[ {\conv}((3,1,1,1), (1,3,1,1), (1,1,2,1), (1,1,1,2)) \]
and $({\Delta'})^{\FI} \cap M = \{ (1,1,2,1) \}$.

The canonical Fano $3$-tope
${\Delta'}$ is the Newton polytope of
the $\mu_4$-cyclic quotient $\overline{Z}_{\Delta'}$ of the projective surface
of degree $8$ defined by the
polynomial $z_1^8 + z_2^8 + z_3^4 + z_4^4 =0$
in the weighted projective space
$\PP(1,1,2,2)$, where the cyclic group $\mu_4$ acts via $(z_1: z_2: z_3:z_4) \mapsto
(z_1 : i ^3z_2: i z_3: i ^3z_4)$. The single interior lattice point in this lattice simplex ${\Delta'}$
corresponds to the monomial $z_1z_2z^2_3z_4$. The projective surface $\overline{Z}_{\Delta'}$ has two Gorenstein cyclic quotient singularities of type $A_3$.
The minimal desingularization $\mathcal S_{\Delta'}$ of $\overline{Z}_{\Delta'}$ is a surface
of general type with $K^2$
and fundamental group $\pi_1(\mathcal S_\Delta)$ of order $2$.

One can identify ${\Delta'}$ with
the canonical Fano $3$-simplex $\Delta$ given as the convex hull
\[ v_1 \defeq (-3,-2,-2), \, v_2 \defeq (1,0,0), \,v_3\defeq(1,3,1), \,\text{and } v_4\defeq (1,1,3) \]
(\href{http://www.grdb.co.uk/forms/toricf3c}{ID $547465$}, Figure \ref{fig:fine3}\subref{fig:547465}, Table
 \ref{tbl:491}, \ref{tbl:492}, and \ref{tbl:493}).
The primitive inward-pointing facet normals of the facets $\theta_{123}$, $\theta_{124}$, $\theta_{234}$, $\theta_{134} \preceq \Delta$ of this simplex $\Delta$ are
\[ n_1 \defeq (-1,-1, 3), \, n_2 \defeq (-1,3,-1), \,n_3 \defeq (-1,0,0 ), \,\text{and }n_4 \defeq (2,-1,-1), \]
respectively.
They satisfy the relation
\[ n_1 + n_2 + 2n_3 + 2n_4 = 0. \]

To compute the canonical hull $\Delta^{\can}$ of $\Delta$, we obtain $\supp(\Delta^{\FI}) = \{s_i \, \vert \, 1 \leq i \leq 9 \}$
with
$s_1 \defeq (-1, -1, 3)$,
$s_2 \defeq (-1, 0,0)$,
$s_3 \defeq (-1,0,1)$,
$s_4 \defeq (-1,0,2)$,
$s_5 \defeq (-1,1,0)$,
$s_6 \defeq (-1,1,1)$,
$s_7 \defeq (-1,2,0)$,
$s_8 \defeq (-1,3,-1)$, and
$s_9 \defeq (2,-1,-1)$,
which leads to
$\Delta^{\can} = \Delta$.}
\end{exnopar}

\begin{rem}
The detailed information about all $49$ canonical Fano $3$-topes with $\dim(\Delta^{\FI})=3$ can be found in the Appendix \ref{appendixA}.
To be precise, it is listed in Table \ref{tbl:491}, \ref{tbl:492}, and \ref{tbl:493} and can be viewed in~\cite[Appendix \ref{appendixA}, Figure A$3$]{Sch18}.\end{rem}

\newpage

\appendix
\section{Computational data}
 \label{appendixA}

{In all the tables, the
canonical Fano $3$-topes $\Delta$ are given by
their \href{http://www.grdb.co.uk/forms/toricf3c}{ID}s used in the
\href{http://www.grdb.co.uk}{Graded Ring Database}.

\renewcommand{\thetable}{\Alph{section}.\arabic{table}}
\setcounter{table}{0}

 %%%%%%%%%%%%%%%%%%%%%%%%%%%%%%%%%%%%%%%%%%%%%%%%
 \hspace{0pt}
\vfill
 \begin{table}[h!]
 \centering
 \rotatebox{0}{%
 \scalebox{1}{
 \begin{tabular}{l|l|l|l|l}
 \multicolumn{1}{c|}{ID} &
 \multicolumn{1}{c|}{$\vertex(\Delta)$} &
 \multicolumn{1}{c|}{$\vertex(\Delta^{\FI})$} &
 \multicolumn{1}{c|}{$v_\Delta$} & \multicolumn{1}{c}{$(w_i)_{0 \leq i \leq 3}$} \\ \toprule \midrule
 $ 547324 $& $ (2,3,8), (1,0,0), (0,1,0), (-1,-1,-1) $&$0,1/2 \cdot v_{\Delta}$&$ (1,1,2) $& $(1,5,6,8)$\\
 $547323$ &$ (-1,1,-2), (1,-2,3), (1,0,0), (-2,5,-3) $& $0,2/3 \cdot v_{\Delta}$&$ (0,1,0) $& $(1,4,7,9)$\\
 $547311 $&$ (-1,4,2), (-1,-1,0), (0,0,-1), (2,0,1) $& $0,2/3 \cdot v_{\Delta}$&$ (0,1,1) $& $(2,5,8,9)$\\
 $547490 $&$ (1,2,4), (1,0,0), (1,-2,3), (-1,1,-2) $& $0,1/2 \cdot v_{\Delta}$&$ (0,1,0) $& $(1,5,8,14)$\\
 $547321 $&$ (1,-2,3), (0,1,0), (1,0,0), (-6,3,-8) $&$0,1/2 \cdot v_{\Delta}$ &$ (-1,1,-2)$ & $(3,7,8,10)$ \\
 $547305 $& $(0,1,0), (1,0,0), (1,2,4), (-4,-6,-7) $ &$0,2/3 \cdot v_{\Delta}$&$ (-1,-1,-1) $& $(4,7,9,10)$ \\
 $547526 $&$ (1,0,0), (0,1,0), (-2,1,5), (2,-4,-9) $& $0,2/3 \cdot v_{\Delta}$ &$ (1,-1,-3)$ & $(5,9,8,11)$\\
 $547454 $&$ (2,1,7), (1,0,0), (0,1,0), (-2,-3,-3) $& $0,1/2 \cdot v_{\Delta}$&$ (0,0,1) $& $(3,7,8,18)$\\
 $547446$ & $(0,1,1), (-6,7,-15), (1,-2,3), (1,0,0) $&$0,1/2 \cdot v_{\Delta}$&$ (-1,1,-2)$& $(5,8,9,22)$
 \end{tabular}
 }
 }
 \vspace{1em}
  \caption[]
 {{\bf {\boldmath $9$} Canonical Fano {\boldmath $3$}-topes with Asymmetric
Fine Interior of Dimension {\boldmath $1$}.} Table contains:
vertices $\vertex(\Delta)$ of $\Delta$,
vertices $\vertex(\Delta^{\FI})$ of the Fine interior
$\Delta^{\FI}$, unique primitive lattice point $v_{\Delta} \in \theta_+$
in the reflexive facet $\theta_+ \preceq \Delta$, and
weights $(w_i)_{0 \leq i \leq 3}$ of the weighted projective $3$-space $\PP(w_0,\ldots,w_3)$
appearing in~\cite{CG11}.}
 \label{tbl:corti}
 \end{table}

 \vfill
 \hspace{0pt}

 \hspace{0pt}
\vfill
 \begin{table}[h!]
 \centering
 \rotatebox{0}{%
 \scalebox{1}{
 \begin{tabular}{l|l|l|l}
 \multicolumn{1}{c|}{ID} &
 \multicolumn{1}{c|}{ $(n_{i})_{1 \leq i \leq 4}$} &
 \multicolumn{1}{c|}{$\vertex(\theta_{+})$} &
 \multicolumn{1}{c}{$n_{\theta_+}$}
 \\ \toprule \midrule
 $ 547324 $& $(-2, -2, 1), (-1, -1, 3), $& $ (2,3,8), (1,0,0), (0,1,0)$& $(-2,-2,1)$ \\
 & $(-1, 3, -1), (7, -3, -1)$& & \\
 $547323$ &$(-3, -3, -2), (-1, 0, 1),$ &$(1,-2,3), (1,0,0), (-2,5,-3)$&$(-3,-3,-2)$ \\
 & $ (-1, 6, 4), (17, 3, -5)$& & \\
 $547311 $& $(-1, -1, 1), (-1, 2, 1), $& $ (-1,4,2), (-1,-1,0), (2,0,1)$& $(1,2,-5)$ \\
 & $(1, 2, -5), (7, -2, 5)$& & \\
 $547490 $& $(-2, -2, 1), (-1, 0, 0), $& $ (1,2,4), (1,0,0), (-1,1,-2) $&$(-2, -2, 1)$ \\
 & $(-1, 6, 4), (23, 2, -8)$& & \\
 $547321 $& $(-3, -3, -2), (-2, -2, 1), $&$ (0,1,0), (1,0,0), (-6,3,-8) $& $(-2, -2, 1)$ \\
 & $(-1, 3, 2), (9, -5, -8)$& & \\
 $547305 $& $(-7, -7, 11), (-2, -2, 1), $ &$(0,1,0), (1,2,4), (-4,-6,-7)$&$(7, -3, -1)$ \\
 & $(-1, 2, -1), (7, -3, -1)$& & \\
 $547526 $& $(-5, -5, -2), (-3, -3, 1), $& $ (1,0,0), (0,1,0), (2,-4,-9)$&$ (-3,- 3, 1)$ \\
 & $(-1, 2, -1), (25, -8, 10)$& & \\
 $547454 $&$(-7, -7, 2), (-1, -1, 2),$& $ (2,1,7), (0,1,0), (-2,-3,-3)$& $(7, -2, -2)$ \\
 & $ (-1, 1, 0), (7, -2, -2)$& & \\
 $547446$ & $(-9, 21, 14), (-5, -3, -2),$&$(0,1,1), (-6,7,-15), (1,-2,3)$& $(9, 1, -3)$ \\
 &$ (-1, -1, 0), (9, 1, -3)$ & &
 \end{tabular}
 }
 }
 \vspace{1em}
 \caption[]
 {{\bf {\boldmath $9$} Canonical Fano {\boldmath $3$}-topes with Asymmetric
Fine Interior of Dimension {\boldmath $1$}.} Table contains:
primitive inward-pointing facet normals $(n_{i})_{1 \leq i \leq 4}$ of $\Delta$,
vertices $\vertex(\theta_{+})$ of the reflexive facet $\theta_{+}\preceq \Delta$, and
primitive inward-pointing facet normal $n_{\theta_+}$ of the reflexive facet $\theta_+ \preceq \Delta$.}
 \label{tbl:corti0}
 \end{table}
 }

 \newpage

 \renewcommand{\thetable}{\Alph{section}.\arabic{table}}
\setcounter{table}{2}

 \hspace{0pt}
\vfill
{
 \begin{table}[h!]
 \centering
\begin{threeparttable}
 \rotatebox{0}{%
 \scalebox{1}{
 \begin{tabular}{l|l|l|l}
 \multicolumn{1}{c|}{ID} &
 \multicolumn{1}{c|}{$\vertex(\Theta)$} &
 \multicolumn{1}{c|}{$\supp(\Delta^{\FI})$} &
 \multicolumn{1}{c}{$\vertex(\Delta^{\can})$}\\ \toprule \midrule
 $ 547324 $ &$(-1, 3, -1), (-1, -1, 1), (1, -1, 0)$ &$(-2, -2, 1),(-1, -1, 1),S_1$& $\vertex(\Delta), (0,1,4)$\\
 $ 547323 $ & $(-1, 0, 1), (-1, 0, 0), (2, 0, -1)$ &$(-3, -3, -2),(-1, 0, 0),S_2$& $ \vertex(\Delta), (-2,4,-3)$\\
 $547311 $& $(-1, -1, 1), (0, 1, -1), (1, 0, 0)$&$(-1, -1, 1),(-1, 0, 1),S_3$& $\vertex(\Delta), (-1, 2, 0)$\\
$ 547490 $&$(-1, 0, 0), (-1, 0, 1), (3, 0, -1)$ &$(-2, -2, 1),(-1, 0, 0),S_4$& $\vertex(\Delta), (1, -1, 4)$\\
 $ 547321 $& $(-1, -1, 0), (-1, 3, 2), (1, -1, -1)$&$(-2, -2, 1),(-1, -1, 0),$& $\vertex(\Delta), (1, 0, 1), $\\
 & & $(-1, 0, 0),(-1, 1, 1),$& $(0, -3, 4)$ \\
 & & $(-1, 3, 2),(0, -1, -1),$ &\\
 && $(1, -1, -1)$&\\
 $ 547305 $& $(-1, 2, -1), (1, -1, 0), (0, -1, 1)$ &$(-1, -1, 1),(-1, 0, 0),$& $\vertex(\Delta), (0, -2, -3), $\\
 & & $(-1, 2, -1),(0, -1, 1),$ & $(1, 2, 2)$\\
 & & $(1, -1, 0),(7, -3, -1)$ & \\
 $ 547526 $&$(-1, -1, 0), (-1, 2, -1), (2, -1, 1)$ &$(-3, -3, 1),(-1, -1, 0),$& $\vertex(\Delta)\setminus \{(-2,1,5)\}, $\\
 & & $(-1, 2, -1),(0, -1, 0),$& $ (0, 1, 3), (-3, 1, 6) $ \\
 & & $(2, -1, 1)$&\\
 $ 547454 $&$(-1, 1, 0), (0, -1, 0), (2, -1, 0)$ &$(-1, -1, 1),(-1, -1, 2),S_5$& $\vertex(\Delta), (2, 1, 2)$\\
 $547446 $& $(-1, -1, 0), (0, 2, 1), (2, 0, -1)$&$(-1, -1, 0),(-1, 0, 0),$& $\vertex(\Delta), (1, 0, -1), $ \\
 & & $(0, 2, 1),(1, 1, 0),$& $(1, 0, 3)$\\
 & & $(2, 0, -1),(9, 1, -3)$&\\
 \end{tabular}
 }
 }
\end{threeparttable}
\vspace{1em}
 \caption[]
 {{\bf {\boldmath $9$} Canonical Fano {\boldmath $3$}-topes with Asymmetric
Fine Interior of Dimension {\boldmath $1$}.} Table contains:
vertices $\vertex(\Theta)$ of the reflexive facet $\Theta \preceq [\Delta^*]$,
support $\supp(\Delta^{\FI})$ of the Fine interior $\Delta^{\FI}$, and
vertices $\vertex(\Delta^{\can})$ of the canonical hull $\Delta^{\can} $.}
 \label{tbl:corti1}
 \end{table}
 }

 \vfill
 \noindent
 {
 \label{S_i}
 \begin{align*}
 S_1 \defeq & (-1, -1, 2),(-1, -1, 3),(-1, 0, 1),(-1, 0, 2),(-1, 1, 0),(-1, 1, 1),(-1, 2, 0),(-1, 3, -1),(0, -1, 1),\\
 &(0, -1, 2),(0, 0, 1), (0, 1, 0),(1, -1, 0),(1, -1, 1),(1, 0, 0),(2, -1, 0) \\
S_2 \defeq & (-1, 0, 1),(-1, 1, 1),(-1, 2, 2),(-1, 3, 2),(-1, 4, 3),(-1, 6, 4),(0, 1, 1),(0, 2, 1),(0, 3, 2),(0, 5, 3), \\
&(1, 1, 0),(1, 2, 1),(1, 4, 2),(2, 0, -1),(2, 1, 0),(2, 3, 1),(3, 2, 0),(4, 1, -1) \\
S_3 \defeq &(-1, 1, 1),(-1, 2, 1),(0, 0, 1),(0, 1, -1),(0, 1, 0),(0, 1, 1),(1, 0, 0),(1, 0, 1),(1, 2, -5) \\
S_4 \defeq &(-1, 0, 1),(-1, 1, 1),(-1, 2, 2),(-1, 3, 2),(-1, 4, 3),(-1, 6, 4),(0, 1, 1),(0, 2, 1),(0, 3, 2),(0, 5, 3),\\
&(1, 0, 0),(1, 1, 0),(1, 2, 1),(1, 4, 2),(2, 1, 0),(2, 3, 1),(3, 0, -1),(3, 2, 0),(4, 1, -1) \\
S_5 \defeq &(-1, 0, 1),(-1, 1, 0),(0, -1, 0),(0, -1, 1),(1, -1, 0),(2, -1, 0),(7, -2, -2)
 \end{align*}
 }

 \vfill
\hspace{0pt}

 \newpage
 %%%%%%%%%%%%%%%%%%%%%%%%%%%%%%%%%%%%%%%%%%%%%%%%
 \hspace{0pt}
\vfill
{
 \begin{table}[h!]
 \centering
 \rotatebox{0}{%
 \scalebox{1}{
 \begin{tabular}{l|l}
 \multicolumn{1}{c|}{ID} & \multicolumn{1}{c}{$\vertex(\Delta)$} \\\toprule \midrule
$547393 $&$ (0,1,0),(2,1,1),(-2,-3,-5),(2,1,9) $ \\
$547409 $&$ (-4,2,9),(1,0,0),(0,1,0),(7,-6,-18) $\\
$547461 $&$ (0,1,0),(2,1,1),(-2,-3,-5),(0,1,4) $\\ \midrule
$544442 $&$ (1,0,0),(0,1,0),(3,-6,8),(1,-4,4),(-5,6,-12) $ \\
$544443 $&$ (-1,-2,0),(3,-6,8),(0,1,0),(1,0,0),(-3,4,-8) $ \\
$544651 $&$ (-4,1,-3),(4,-2,3),(0,1,0),(1,-2,3),(-1,1,-3) $ \\
$544696 $&$(5,-4,-15),(1,0,0),(0,1,0),(-4,2,9),(-3,1,6) $ \\
$544700 $&$ (-2,-3,-3),(0,1,0),(1,0,0),(-1,-4,-6),(2,5,9) $ \\
$544749 $&$ (-6,-5,-8),(0,1,0),(1,0,0),(-2,-1,0),(3,2,4) $ \\ \midrule
$520925 $&$ (0,1,0),(0,0,1),(-2,-1,0),(-2,0,-1),(8,2,3),(-2,-3,-2) $ \\
$520935 $&$ (3,4,6),(2,1,2),(-3,-2,-2),(1,0,0),(0,1,0),(-6,-5,-8) $ \\
$522056 $&$ (-1,-1,0),(0,1,0),(1,0,0),(-1,-1,-3),(-5,-3,-6),(6,4,9) $ \\
$522059 $&$ (2,5,6),(-2,-3,-3),(0,1,0),(1,0,0),(-1,-4,-6),(0,1,3) $ \\
$522087 $&$ (1,0,-3),(1,0,0),(0,1,0),(-4,2,9),(-3,1,6),(5,-4,-12) $ \\
$522682 $&$ (2,1,4),(-3,-2,-4),(-2,-3,-4),(1,2,4),(1,0,0),(0,1,0) $ \\
$522684 $&$ (-2,-1,-4),(3,2,4),(-2,-1,0),(1,0,0),(0,1,0),(-4,-3,-4) $\\
$526886 $&$ (-3,4,-6),(1,0,0),(0,1,0),(3,-6,8),(0,1,-2),(2,-5,6) $ \\ \midrule
$439403 $&$ (1,2,2),(-1,0,0),(-1,1,-1),(1,0,0),(-1,-2,-2),(1,1,3),$\\
&$(1,-3,-1) $ \\ \midrule
$275525 $&$ (4,1,2),(0,1,0),(-2,-1,0),(1,1,2),(-3,-1,-2),(-2,-1,-2),$ \\
&$(1,1,0),(1,-1,0)$\\
$275528 $&$(-1,0,-1),(-3,-2,1),(-2,-1,2),(0,-1,0),(0,1,0),(1,0,1),$ \\
&$(2,1,-2), (3,2,-1)$
 \end{tabular}
 }
 }
 \vspace{1em}
  \caption[]
 {{\bf {\boldmath $20$} Canonical Fano {\boldmath $3$}-topes with Symmetric
Fine Interior of Dimension {\boldmath $1$}.} Table contains:
vertices $\vertex(\Delta)$ of $\Delta$.}
 \label{tbl:20}
 \end{table}

 \vfill
\hspace{0pt}
}

\newpage

 {
 \hspace{0pt}
\vfill
 \begin{table}[h!]
 \centering
 \rotatebox{0}{%
 \scalebox{0.8}{
 \begin{tabular}{l|l|l|l|l}
 \multicolumn{1}{c|}{ID} & \multicolumn{1}{c|}{$\vertex(\Delta^{\FI})$} & \multicolumn{1}{c|}{$\pm v_{\Delta}$} & \multicolumn{1}{c|}{$\vertex(\theta_{\pm})$} & \multicolumn{1}{c}{$\supp(\Delta^{\FI})$} \\ \toprule \midrule
$547393 $&$\pm 1/2 \cdot v_\Delta$&$\pm(0,0,1) $ &$ (0,1,0),(2,1,1),(-2,-3,-5)$&$(-1, -2, 2),(-1, 1, 0),(0, -1, 0),$\\
& & &$(0,1,0),(-2,-3,-5),(2,1,9) $ & $(1, -1, 0),(2, -1, 0),(9, -2, -2) $ \\
$547409 $& $\pm 2/3 \cdot v_\Delta$&$ \pm(1,-1,-3) $&$(-4,2,9),(1,0,0),(0,1,0) $&$(-3, -3, -1),(-1, -1, 0),(-1, 2, -1),$\\
& & &$ (-4,2,9),(0,1,0),(7,-6,-18)$& $(2, -1, 1),(15, -3, 7)$ \\
$547461 $& $\pm 1/2 \cdot v_\Delta$&$\pm (0,0,1) $&$ (0,1,0),(2,1,1),(-2,-3,-5)$&$(-3, 6, -2),(-1, -2, 2),(-1, 1, 0),$\\
& & & $(2,1,1),(-2,-3,-5),(0,1,4) $& $(0, -1, 0),(1, -1, 0),(2, -1, 0)$\\ \midrule
$544442 $& $\pm 1/2 \cdot v_\Delta$&$\pm(1,-1,2) $ &$(0,1,0),(1,-4,4),(-5,6,-12) $&$ (-2, -2, -1),(-1, -1, 0),(-1, 1, 1),$\\
& & &$(1,0,0),(0,1,0),(3,-6,8) $&$(1, -1, -1),(3, -1, -2),(10, -2, -5)$ \\
$544443 $& $\pm 1/2 \cdot v_\Delta$&$ \pm(1,-1,2) $ &$(-1,-2,0),(0,1,0),(-3,4,-8) $&$ (-2, -2, -1),(-1, -1, 0),(-1, 1, 1),$\\
& & & $(3,-6,8),(0,1,0),(1,0,0) $&$(1, -1, -1),(3, -1, -2),(6, -2, -3)$\\
$544651 $& $\pm 2/3 \cdot v_\Delta$&$ \pm(1,0,0) $&$ (-4,1,-3),(0,1,0),(1,-2,3)$&$(-3, -3, 1),(0, -1, -1),(0, -1, 0),$\\
& & & $(4,-2,3),(0,1,0),(-1,1,-3) $&$(0, 2, 1),(3, -3, -4) $\\
$544696 $& $\pm 2/3 \cdot v_\Delta$&$\pm (1,-1,-3) $ &$ (1,0,0),(0,1,0),(-4,2,9)$&$(-3, -3, -1),(-3, 12, -4),(-1, -1, 0), $\\
& & &$(5,-4,-15),(1,0,0),(-3,1,6) $& $(-1, 2, -1),(2, -1, 1)$\\
$544700 $& $\pm 2/3 \cdot v_\Delta$&$ \pm(1,2,3) $ &$(-2,-3,-3),(0,1,0),(-1,-4,-6) $&$(-3, -3, 2),(-1, -1, 1),(-1, 2, -1),$\\
& & &$ (0,1,0),(1,0,0),(2,5,9)$& $(2, -1, 0),(3, -3, 2) $ \\
$544749 $& $\pm 1/2 \cdot v_\Delta$&$\pm (1,1,2) $&$ (-6,-5,-8),(0,1,0),(1,0,0)$&$(-2, -2, 3),(-1, -1, 1),(-1, 1, 0), $\\
& & & $(0,1,0),(-2,-1,0),(3,2,4) $&$(-1, 3, -1),(1, -1, 0),(2, -2, -1)$\\\midrule
$520925 $& $\pm 1/2 \cdot v_\Delta$&$\pm(2,1,1) $ &$ (-2,-1,0),(-2,0,-1),(-2,-3,-2)$&$(-1, -1, 3),(0, -1, 1),(0, 1, -1),$\\
& & &$ (0,1,0),(0,0,1),(8,2,3)$& $(1, -2, -2),(1, -1, -1),(1, 0, 0)$ \\
$520935 $& $\pm 1/2 \cdot v_\Delta$&$ \pm(1,1,2) $ &$(1,0,0),(0,1,0),(-6,-5,-8) $&$(-2, -2, 3),(-1, -1, 1),(-1, 1, 0),$\\
& & & $(3,4,6),(2,1,2),(-3,-2,-2) $& $(-1, 3, -1),(0, 4, -3),(1, -1, 0)$\\
$522056 $& $\pm 2/3 \cdot v_\Delta$&$ \pm (2,1,3)$ &$ (0,1,0),(-1,-1,-3),(-5,-3,-6)$&$(-3, 6, -1),(-1, -1, 1),(-1, 2, 0),$\\
& & &$ (-1,-1,0),(1,0,0),(6,4,9)$ & $(0, -3, 2),(2, -1, -1)$\\
$522059 $& $\pm 2/3 \cdot v_\Delta$&$\pm (1,2,3) $ &$ (-2,-3,-3),(0,1,0),(-1,-4,-6)$&$(-3, 3, -2),(-1, -1, 1),(-1, 2, -1),$\\
& & &$ (2,5,6),(1,0,0),(0,1,3)$& $(2, -1, 0),(3, -3, 2) $\\
$522087 $& $\pm 2/3 \cdot v_\Delta$&$ \pm(1,-1,-3) $&$ (1,0,0),(0,1,0),(-4,2,9)$&$ (-3, -3, -1),(-1, -1, 0),(-1, 2, -1),$\\
& & &$ (1,0,-3),(-3,1,6),(5,-4,-12)$& $(2, -1, 1),(9, 0, 4)$ \\
$522682 $& $\pm 1/2 \cdot v_\Delta$&$\pm (1,1,2) $ &$(-3,-2,-4),(-2,-3,-4),(1,0,0),(0,1,0) $&$(-2, -2, 1),(-2, -2, 3),(-1, -1, 1),$\\
& & &$ (2,1,4),(1,2,4),(1,0,0),(0,1,0)$ & $(-1, 1, 0),(1, -1, 0),(1, 1, -1)$\\
$522684 $& $\pm 1/2 \cdot v_\Delta$&$\pm(1,1,2) $ &$(-2,-1,-4),(1,0,0),(-4,-3,-4) $&$(-2, 2, 1),(-1, -1, 1),(-1, 1, 0),$\\
& & & $(3,2,4),(-2,-1,0),(0,1,0) $&$(-1, 3, -1),(1, -1, 0),(2, -2, -1)$\\
$526886 $& $\pm 1/2 \cdot v_\Delta$&$ \pm (1,-1,2) $ &$ (-3,4,-6),(0,1,-2),(2,-5,6)$&$ (-2, -2, -1),(-1, -1, 0),(-1, 1, 1),$\\
& & &$(1,0,0),(0,1,0),(3,-6,8) $& $(0, 4, 3),(1, -1, -1),(3, -1, -2)$ \\ \midrule
$439403 $& $\pm 1/2 \cdot v_\Delta$&$\pm (0,1,1) $ &$(-1,1,-1),(1,0,0),(-1,-2,-2),(1,-3,-1) $&$(-2, -1, 3),(-1, -1, 1),(-1, 0, 0),$\\
& & & $ (1,2,2),(-1,0,0),(-1,1,-1),(1,1,3)$& $(1, 0, 0),(1, 1, -1),(2, -1, -1)$ \\\midrule
$275525 $& $\pm 1/2 \cdot v_\Delta$&$\pm(1,0,0) $ &$ (0,1,0),(-2,-1,0),(1,1,2),(-3,-1,-2)$&$(-2, 0, 3),(0, -1, 0),(0, -1, 1),$\\
& & &$(4,1,2),(-2,-1,-2),(1,1,0),(1,-1,0) $ & $\pm(0, 1, -1),(0, 1, 0),(2, -2, -1) $ \\
$275528 $& $\pm 1/2 \cdot v_\Delta$&$\pm(1,1,-1) $ &$(-3,-2,1),(-2,-1,2),(0,-1,0),(1,0,1)$&$(-1, 1, 0),(-1, 2, -1),(0, -1, -1),$\\
& & &$ (-1,0,-1),(0,1,0),(2,1,-2),(3,2,-1)$ & $(0, 1, 1),(1, -2, 1),(1, -1, 0)$
 \end{tabular}
 }
 }
 \vspace{1em}
  \caption[]
 {{\bf {\boldmath $20$} Canonical Fano {\boldmath $3$}-topes with Symmetric
Fine Interior of Dimension {\boldmath $1$}.} Table contains:
vertices $\vertex(\Delta^{\FI})$ of the Fine interior $\Delta^{\FI}$,
unique primitive lattice points $\pm v_{\Delta} \in \theta_\pm$
in the reflexive facets $\theta_\pm \preceq \Delta$,
vertices $\vertex(\theta_{{\pm}})$ of the reflexive facets $\theta_{{\pm}}
\preceq \Delta$, and
 support $\supp(\Delta^{\FI})$
of the Fine interior $\Delta^{\FI}$ (here: $\Delta^{\can} = \Delta$). }
 \label{tbl:200}
 \end{table}

 \vfill
\hspace{0pt}
}

\newpage
 %%%%%%%%%%%%%%%%%%%%%%%%%%%%%%%%%%%%%%%%%%%%%%%%
 \hspace{0pt}
\vfill
 {
 \begin{table}[h!]
 \centering
 \rotatebox{0}{%
 \scalebox{0.85}{
 \begin{tabular}{l|l}
 \multicolumn{1}{c|}{ID} & \multicolumn{1}{c}{$\vertex(\Delta)$} \\ \toprule \midrule
$547444 $&$ (1,0,0),(-2,-4,-5),(1,2,4),(1,4,2) $\\
$547465 $&$ (-3,-2,-2),(1,0,0),(1,3,1),(1,1,3) $\\
$547524 $&$ (0,2,1),(-2,-3,-5),(2,1,1),(0,0,1) $\\
$547525 $&$ (0,0,1),(0,1,0),(2,1,1),(-2,-5,-7) $\\\midrule
$545317 $&$ (-3,4,-6),(0,1,0),(1,0,0),(1,-2,4),(3,-5,6) $\\
$545932 $&$ (0,-1,-1),(1,-1,-3),(-2,1,5),(1,0,0),(1,2,-2) $\\
$546013 $&$ (3,-5,6),(1,-2,4),(1,0,0),(-1,1,-2),(-1,3,-2) $\\
$546062 $&$ (0,1,3),(-2,1,-1),(0,1,0),(1,0,0),(-1,-2,-2) $\\
$546070 $&$ (0,-2,-3),(0,2,1),(-2,-3,-5),(2,1,1),(0,0,1) $\\
$546205 $&$ (1,2,-2),(-1,0,2),(1,0,0),(-2,1,5),(1,-1,-3) $\\
$546219 $&$ (1,1,1),(-3,-2,-2),(1,0,0),(1,3,1),(-1,-1,1) $\\
$546663 $&$ (2,-3,-1),(1,0,0),(0,1,0),(0,0,1),(-2,-3,-3) $\\
$546862 $&$ (1,0,0),(0,1,0),(-2,1,5),(1,-1,-3),(1,2,-2) $\\
$546863 $&$ (-1,-1,1),(1,3,1),(0,0,1),(1,0,0),(-3,-2,-2) $\\
$547240 $&$ (-1,1,-2),(0,1,0),(1,0,0),(1,-2,4),(3,-5,6) $\\
$547246 $&$ (0,-2,-3),(-2,-3,-5),(2,1,1),(0,1,0),(0,0,1) $\\\midrule
$532384 $&$ (1,-1,-3),(-2,1,5),(1,0,0),(1,-1,-2),(0,-1,-1),(1,2,-2) $\\
$532606 $&$ (0,-1,2),(-1,-1,0),(0,1,0),(1,0,0),(2,2,-3),(-2,0,-3) $\\
$533513 $&$ (-1,1,2),(1,0,0),(0,1,0),(1,1,2),(-1,-2,-4),(-2,-3,-4) $\\
$534667 $&$ (1,0,3),(-1,-1,-1),(0,1,0),(1,0,0),(-1,-1,0),(5,2,3) $\\
$534669 $&$ (1,3,0),(5,3,2),(-1,-1,-1),(0,0,1),(1,0,0),(-1,-1,0) $\\
$534866 $&$ (-1,-1,-3),(1,0,0),(0,1,0),(1,1,1),(-1,-1,0),(-3,-5,-3) $\\
$535952 $&$ (3,-5,6),(1,-2,4),(1,0,0),(0,1,0),(-1,1,-2),(-1,2,-2) $\\
$536013 $&$ (0,1,1),(0,0,1),(0,1,0),(2,1,1),(-2,-3,-5),(0,-2,-3) $\\
$536498 $&$ (1,2,-2),(1,-1,-2),(1,0,0),(0,1,0),(-2,1,5),(1,-1,-3) $ \\
$537834 $&$ (0,0,1),(1,0,0),(0,1,0),(-2,1,5),(1,-1,-3),(1,2,-2) $\\
$538356 $&$ (-2,-3,-3),(-1,-3,-1),(0,0,1),(0,1,0),(1,0,0),(-1,-1,-3) $\\
$539063 $&$ (-1,1,-1),(1,1,3),(-3,-2,-2),(1,0,0),(0,1,0),(1,1,2) $\\
$539304 $&$ (1,0,1),(-3,-1,-2),(1,1,2),(-2,-1,0),(1,0,0),(1,2,0) $\\
$539313 $&$ (1,-1,-2),(1,1,-1),(-1,2,2),(1,-1,-3),(-2,1,5),(1,0,0) $\\
$540602 $&$ (0,0,1),(1,0,0),(-2,1,5),(1,-1,-3),(-1,2,2),(1,1,-1) $\\
$540663 $&$ (1,0,0),(0,1,0),(1,1,2),(-3,-1,-2),(1,1,1),(-3,-2,0) $\\\midrule
$474457 $&$ (-1,2,-3),(1,0,2),(0,0,1),(0,1,0),(1,0,0),(-1,-1,0),(-3,-2,-3) $\\
$481575 $&$ (3,2,4),(-1,-1,-2),(-3,-1,-2),(-2,-1,0),(0,1,0),(1,0,0),(0,0,-1) $\\
$483109 $&$ (3,0,2),(1,-2,-2),(0,0,-1),(-1,-1,0),(1,1,1),(0,1,0),(-1,0,0) $\\
$490478 $&$ (1,-1,-2),(1,1,-1),(-1,2,2),(1,-1,-3),(-2,1,5),(1,0,0),(-1,0,2) $\\
$490481 $&$ (-3,-2,0),(-5,-3,-2),(1,0,0),(0,1,0),(1,1,2),(-1,-1,-1),(2,1,1) $\\
$490485 $&$ (-1,-1,0),(1,2,0),(1,0,0),(-2,-1,0),(1,1,2),(-3,-1,-2),(1,0,1) $\\
$490511 $&$ (1,0,0),(0,1,0),(-2,-1,0),(1,1,2),(2,1,1),(1,0,1),(-5,-2,-4) $\\
$495687 $&$ (0,0,-1),(1,1,-1),(-1,2,2),(1,-1,-3),(-2,1,5),(1,0,0),(0,0,1) $\\
$499287 $&$ (1,1,1),(-1,-1,-3),(1,0,0),(0,1,0),(0,0,1),(-1,-3,-1),(-2,-3,-3) $\\
$499291 $&$ (-1,-1,-1),(-1,-1,-3),(1,0,0),(0,1,0),(0,0,1),(-1,-3,-1),$\\
& $(-2,-3,-3) $\\
$499470 $&$ (1,0,0),(0,1,0),(-2,-1,0),(1,1,2),(0,0,1),(-5,-2,-4),(2,1,1) $\\
$501298 $&$ (3,-6,8),(-1,1,-2),(1,-2,3),(0,1,0),(1,0,0),(0,1,-1),(3,-5,6) $\\
$501330 $&$ (1,0,0),(0,1,0),(-2,-1,0),(1,1,2),(1,1,1),(0,0,1),(-5,-2,-4) $\\\midrule
$354912 $&$ (3,1,2),(1,0,0),(0,1,0),(-2,-1,0),(1,1,2),(2,1,1),(1,0,1),(-5,-2,-4) $\\
$372528 $&$ (2,1,1),(-1,-1,-1),(1,1,2),(0,1,0),(1,0,0),(-5,-3,-2),(-3,-2,0),$\\
& $(1,1,0) $\\
$372973 $&$ (-5,-2,-4),(1,0,1),(2,1,1),(1,1,2),(-2,-1,0),(0,1,0),(1,0,0),(2,1,2) $\\
$388701 $&$ (1,1,1),(-2,-3,-3),(-1,-3,-1),(0,0,1),(0,1,0),(1,0,0),(-1,-1,-3),$ \\
& $(-1,-1,-1) $\\
 \end{tabular}
 }
 }
 \vspace{1em}
  \caption[]
 {{\bf {\boldmath $49$} Canonical Fano {\boldmath $3$}-topes with
Fine Interior of Dimension {\boldmath $3$}.} Table contains:
vertices $\vertex(\Delta)$ of $\Delta$.}
 \label{tbl:491}
 \end{table}
 \vfill
 \hspace{0pt}
 }

 \hspace{0pt}
\vfill
 {
 \begin{table}[h!]
 \centering
 \rotatebox{0}{%
 \scalebox{0.85}{
 \begin{tabular}{l|l}
 \multicolumn{1}{c|}{ID} & \multicolumn{1}{c}{$\vertex(\Delta^{\FI})$} \\ \toprule \midrule
$547444 $&$ 0, (-1/2, -1, -3/2), (0, -1/3, -2/3), (0, 1/3, -1/3) $\\
$547465 $&$ 0, (-1, -1/2, -1/2), (0, 3/4, 1/4), (0, 1/4, 3/4)$\\
$547524 $&$ 0, (0, 1/2, 0), (1/3, 1/3, 0), (-1/3, -1/3, -1) $\\
$547525 $&$ 0, (0, 0, -1/2), (1/3, 0, -1/3), (-1/3, -1, -5/3)$ \\\midrule
$545317 $&$ 0, (1, -3/2, 2), (2/3, -2/3, 1), (1/2, -1/2, 1), (2/3, -1, 5/3) $\\
$545932 $&$ 0, (-1/2, 1/2, 3/2), (0, 1/3, 2/3), (0, 2/3, 1/3) $\\
$546013 $&$0, (1, -3/2, 2), (0, 1/2, 0), (1/2, -1/4, 1/2), (1/2, -3/4, 3/2)$\\
$546062 $&$ 0, (-1/2, -1/2, -1/2), (-2/3, 0, -1/3), (-1/3, 0, 1/3) $\\
$546070 $&$ 0, (0, 1/2, 0), (1/2, 1/4, 0), (0, -1/2, -1), (-1/2, -3/4, -3/2)$\\
$546205 $&$ 0, (-1/2, 1/2, 3/2), (0, 1/3, 2/3), (0, 2/3, 1/3) $\\
$546219 $&$ 0, (-1, -1/2, -1/2), (-1/3, 1/3, 0),(-2/3, -1/3, 0)$\\
$546663 $&$ 0, (0, -1/2, 0), (1/3, -1, -1/3), (-1/3, -1, -2/3)$ \\
$546862 $&$ 0, (-1/2, 1/2, 3/2), (0, 1/3, 2/3), (0, 2/3, 1/3) $\\
$546863 $&$0, (-1, -1/2, -1/2), (-1/3, 1/3, 0), (-2/3, -1/3, 0) $\\
$547240 $&$0, (1, -3/2, 2), (2/3, -2/3, 1), (1/2, -1/2, 1), (2/3, -1, 5/3) $\\
$547246 $&$ 0, (0, 0, -1/2), (1/3, 0, -1/3), (0,-1/2, -1), (-1/3, -2/3, -4/3) $\\\midrule
$532384 $&$0, (-1/2, 1/2, 3/2), (0,1/3, 2/3), (0, 2/3, 1/3) $\\
$532606 $&$ 0, (0, 1/2, -1/2), (1/3, 2/3,-1), (-1/3, 1/3, -1)$\\
$533513 $&$ 0, (-1/2, -1/2, -1), (-1/2, 0, 0), (-1/3, 0, -1/3), (-2/3, -2/3, -1)$\\
$534667 $&$ 0, (1/2, 1/2, 1/2), (4/3, 2/3, 1), (2/3, 1/3, 1) $\\
$534669 $&$ 0, (1/2, 1/2, 1/2), (4/3, 1, 2/3), (2/3, 1, 1/3) $\\
$534866 $&$0, (0, -1/2, -1/2), (-1/3, -2/3, -1), (-2/3, -4/3, -1) $\\
$535952 $&$0, (1, -3/2, 2), (2/3, -2/3, 1), (1/2, -1/2, 1), (2/3, -1, 5/3) $\\
$536013 $&$ 0, (0, 0, -1/2),(1/3, 0, -1/3), (0, -1/2, -1), (-1/3, -2/3, -4/3) $\\
$536498 $&$0, (-1/2, 1/2, 3/2), (0, 1/3, 2/3), (0, 2/3, 1/3) $\\
$537834 $&$0, (-1/2, 1/2, 3/2), (0, 1/3, 2/3), (0, 2/3, 1/3) $\\
$538356 $&$0, (0, -1/2,-1/2), (-1/3, -2/3, -1), (-1/3, -1, -2/3), (-1/2, -1, -1)$\\
$539063 $&$ 0, (-1, -1/2, -1/2), (-2/3, 0, -1/3), (-1/3, 0, 1/3)$\\
$539304 $&$0, (0, 1/2, 0), (-1/2, 0, 0), (0, 1/3, 1/3), (-2/3, 0, -1/3) $\\
$539313 $&$ 0, (-1/2, 1/2, 3/2), (0, 1/3, 2/3), (0, 1/2, 1/2), (-1/3, 2/3, 1) $\\
$540602 $&$ 0, (-1/2, 1/2, 3/2), (0, 1/3, 2/3), (0, 1/2, 1/2), (-1/3,2/3, 1)$ \\
$540663 $&$0, (-1/2, 0, 0), (-1, -1/2, 0), (-1/3, 0, 1/3), (-1, -1/3, -1/3) $\\\midrule
$474457 $&$0, (0, 0, -1/2), (-1/3, 1/3, -1), (-2/3, -1/3, -1) $\\
$481575 $&$ 0, (-1/2, 0, 0), (1/2, 1/2, 1), (0, 1/3, 1/3), (-1/3, 0, 1/3)$\\
$483109 $&$ 0, (0, -1/2, 0), (2/3, -1/3, 1/3), (1/3, -2/3, -1/3) $\\
$490478 $&$0, (-1/2, 1/2, 3/2), (0, 1/3, 2/3), (0, 1/2, 1/2), (-1/3, 2/3, 1) $\\
$490481 $&$0, (-1/2, 0, 0), (-1, -1/2, 0), (-1/3, 0, 1/3), (-4/3, -2/3, -1/3) $ \\
$490485 $&$0, (0,1/2, 0), (-1/2, 0, 0), (0, 1/3, 1/3), (-2/3, 0, -1/3)$\\
$490511 $&$ 0, (-3/2, -1/2, -1), (-1/2, 0, 0), (-2/3, 0, -1/3), (-1, -1/3, -1/3) $\\
$495687 $&$0, (-1/2, 1/2, 3/2), (0, 1/3, 2/3), (0, 1/2, 1/2), (-1/3, 2/3, 1) $\\
$499287 $&$0, (0, -1/2, -1/2), (-1/3, -2/3, -1), (-1/3, -1,-2/3), (-1/2, -1, -1) $\\
$499291 $&$ 0, (0, -1/2, -1/2), (-1/3, -2/3,-1), (-1/3, -1, -2/3), (-1/2, -1, -1)$\\
$499470 $&$ 0, (-3/2, -1/2,-1), (-1/2, 0, 0), (-2/3, 0, -1/3), (-1, -1/3, -1/3)$\\
$501298 $&$ 0, (1/2, -1/2, 1), (2/3, -2/3, 1), (1, -3/2, 2), (1, -5/3, 7/3) $\\
$501330 $&$ 0, (-3/2, -1/2, -1), (-1/2, 0, 0), (-2/3, 0, -1/3), (-1, -1/3, -1/3)$ \\\midrule
$354912 $&$ 0, (-3/2, -1/2, -1), (-1/2, 0, 0), (-2/3, 0, -1/3), (-1, -1/3, -1/3)$ \\
$372528 $&$ 0, (-1/2, 0, 0), (-1, -1/2, 0), (-1/3,0, 1/3), (-4/3, -2/3, -1/3) $\\
$372973 $&$0, (-3/2, -1/2, -1), (-1/2, 0, 0), (-2/3, 0, -1/3), (-1, -1/3, -1/3) $ \\
$388701 $&$0, (0, -1/2, -1/2), (-1/3, -2/3, -1), (-1/3, -1, -2/3), (-1/2, -1, -1) $
 \end{tabular}
 }
 }
 \vspace{1em}
  \caption[]
 {{\bf {\boldmath $49$} Canonical Fano {\boldmath $3$}-topes with
Fine Interior of Dimension {\boldmath $3$}.} Table contains:
vertices $\vertex(\Delta^{\FI})$ of the Fine interior $\Delta^{\FI}$.}
 \label{tbl:492}
 \end{table}
 \vfill
 \hspace{0pt}

 }
\newpage

\hspace{0pt}
\vfill
 {
 \begin{table}[h!]
 \centering
 \rotatebox{0}{%
 \scalebox{0.85}{
 \begin{tabular}{l|l|l|c}
 \multicolumn{1}{c|}{ID} & \multicolumn{1}{c|}{$\supp(\Delta^{\FI})$} & \multicolumn{1}{c|}{$\vertex(\Delta^{\can} )$}& $\left\vert \pi_1(\mathcal S_\Delta) \right\vert$ \\ \toprule \midrule
$547444 $&$(-2, -1, 2),(-1, 0, 0),(-1, 2, -1),(1, 1, -1),(3, 0, -1),$&$\vertex(\Delta)$ & $1$\\
& $(5, -1, -1)$ & & \\
$547465 $& $(-1, -1, 3),(-1, 0, 0),(-1, 0, 1),(-1, 0, 2),(-1, 1, 0),$& $\vertex(\Delta)$ & $2$\\
& $(-1, 1, 1),(-1, 2, 0),(-1, 3, -1),(2, -1, -1)$ & & \\
$547524 $&$(-1, -2, 2),(-1, 1, 0),(-1, 2, -1),(0, 0, -1),(0, 1, -1),$&$\vertex(\Delta), (0,-1,-1)$& $1$\\
& $(0, 2, -1),(1, 0, -1),(1, 1, -1),(2, 0, -1),(3, 0, -1)$& & \\
$547525 $&$(-1, -2, 2),(-1, 2, -1),(0, -1, 0),(0, 0, -1),(0, 1, -1),$&$\vertex(\Delta),(1,1,1),$& $1$\\
& $(1, -1, -1),(1, -1, 0),(1, 0, -1),(1, 1, -1),(2, -1, -1),$& $(-1,-2,-3)$ & \\
& $(2, -1, 0),(2, 0, -1),(3, -1, -1),(3, -1, 0),(3, 0, -1),$& & \\
& $(4, -1, -1),(4, 0, -1),(5, -1, -1),(6, -1, -1)$& & \\ \midrule
$545317 $& $(-2, -2, -1),(-1, -1, 0),(-1, 2, 2),(1, -1, -1),(1, 2, 1),$&$\vertex(\Delta)$ & $1$\\
& $(3, 2, 0)$& & \\
$545932 $& $(-2, -1, -1),(-1, 0, 0),(-1, 2, -1),(0, 1, 0),(1, 1, 0),$ & $\vertex(\Delta), (1,-1,-2),$& $1$\\
& $(2, 0, 1),(3, 0, 1),(5, -1, 2)$& $(1,0,-3)$& \\
$546013 $ & $(-2, -2, -1),(-1, 0, 1),(-1, 2, 2),(0, 1, 1),(1, 0, 0),$& $\vertex(\Delta)$ & $2$ \\
& $(1, 2, 1),(2, 1, 0),(3, 0, -1),(3, 2, 0)$& & \\
$546062 $ & $(-1, -1, 0),(-1, -1, 1),(-1, -1, 2),(-1, 0, 0),(-1, 0, 1),$ &$\vertex(\Delta)$ & $1$\\
& $(-1, 1, 0),(-1, 2, -1),(0, -1, 0),(2, 1, -1)$& & \\
$546070 $& $(-1, -2, 2),(-1, 2, -1),(0, 0, -1),(0, 1, -1),(0, 2, -1),$ &$\vertex(\Delta)$ & $2$\\
& $(1, 0, -1),(1, 1, -1),(2, 0, -1),(3, 0, -1)$& & \\
$546205 $& $(-2, -1, -1),(-1, 0, 0),(-1, 2, -1),(-1, 3, -1),(0, 1, 0),$&$\vertex(\Delta)$ & $1$\\
& $(1, 1, 0),(1, 2, 0),(2, 0, 1),(3, 0, 1),(3, 1, 1),$& & \\
& $(5, -1, 2),(5, 0, 2),(7, -1, 3)$& & \\
$546219 $ & $(-1, -1, 3),(-1, 0, 0),(-1, 0, 1),(-1, 0, 2),(-1, 1, -1),$&$\vertex(\Delta)$ & $1$\\
& $(-1, 1, 0),(-1, 1, 1),(-1, 2, 0),(0, 0, -1),(2, -1, -1)$& & \\
$546663 $ & $(-1, -1, -1),(-1, -1, 0),(-1, -1, 1),(-1, -1, 2),(-1, 0, -1),$ &$\vertex(\Delta),(-1,-1,-1)$ & $1$\\
&$(0, -1, -1),(0, -1, 0),(0, -1, 1),(0, 0, -1), (1, -1, -1),$ & & \\
& $(1, -1, 0),(1, 0, -1),(1, 2, -2),(2, -1, -1),(2, -1, 0),$& & \\
& $(2, 0, -1),(3, -1, -1)$& & \\
$546862 $ &$(-2, -1, -1),(-1, 0, 0),(-1, 2, -1),(-1, 3, -1),(0, 1, 0),$& $\vertex(\Delta),(0,0,1)$& $1$\\
& $(0, 4, -1),(1, 1, 0),(1, 2, 0),(2, 0, 1),(2, 3, 0),$& & \\
& $(3, 0, 1),(3, 1, 1),(4, 2, 1),(5, -1, 2),(5, 0, 2),$& & \\
& $(6, 1, 2),(7, -1, 3),(8, 0, 3),(10, -1, 4)$& & \\
$546863 $& $(-1, -1, 3),(-1, 0, 0),(-1, 0, 1),(-1, 0, 2),(-1, 1, -1),$& $\vertex(\Delta) \setminus \{(0,0,1)\},$ & $1$\\
& $(-1, 1, 0),(-1, 1, 1),(-1, 2, 0),(0, 0, -1),(2, -1, -1)$& $(1,1,1)$& \\
$547240 $& $(-2, -2, -1),(-1, -1, 0),(-1, 0, 1),(-1, 2, 2),(0, -1, 0),$& $\vertex(\Delta),(0,1,-1),$& $1$\\
& $(0, 1, 1),(1, -1, -1),(1, 0, 0),(1, 2, 1),(2, -1, -1),$&$(0,0,1)$ & \\
& $(2, 1, 0),(3, 0, -1),(3, 2, 0)$& & \\
$547246 $&$(-1, -2, 2),(-1, 2, -1),(0, -1, 0),(0, 0, -1),(0, 1, -1),$&$\vertex(\Delta),(1,1,1),$ & $1$ \\
& $(0, 2, -1),(1, -1, -1),(1, -1, 0),(1, 0, -1),(1, 1, -1),$& $(-1,-1,-2)$ & \\
& $(2, -1, -1),(2, -1, 0),(2, 0, -1),(3, -1, -1),(3, 0, -1),$& & \\
& $(4, -1, -1)$& & \\\midrule
$532384 $ & $(-2, -1, -1),(-1, 0, 0),(-1, 2, -1),(0, 1, 0),(1, 1, 0),$ & $\vertex(\Delta),(1,0,-3)$& $1$ \\
&$(2, 0, 1),(3, 0, 1),(5, -1, 2)$ & & \\
 \end{tabular}
 }
 }
 \vspace{1em}
  \caption[]
 {{\bf {\boldmath $49$} Canonical Fano {\boldmath $3$}-topes with
Fine Interior of Dimension {\boldmath $3$}.} Table contains:
 support $\supp(\Delta^{\FI})$ of the Fine interior $\Delta^{\FI}$,
vertices $\vertex(\Delta^{\can})$ of the canonical hull $\Delta^{\can}$,
 and order of fundamental group $\left\vert \pi_1(\mathcal S_\Delta) \right\vert$ of the minimal model $\mathcal S_{\Delta}$.}
 \label{tbl:493}
 \end{table}
 \vfill
 \hspace{0pt}
 }
\newpage

\hspace{0pt}
\vfill
 {
 \begin{table}[h!]
 \ContinuedFloat
 \centering
 \caption[]
 {}
 \rotatebox{0}{%
 \scalebox{0.85}{
 \begin{tabular}{l|l|l|c}
 \multicolumn{1}{c|}{ID} & \multicolumn{1}{c|}{$\supp(\Delta^{\FI})$} & \multicolumn{1}{c|}{$\vertex(\Delta^{\can} )$} & $\left\vert \pi_1(\mathcal S_\Delta) \right\vert$\\ \toprule \midrule
$532606 $ & $(-1, -1, -1),(-1, 1, 0),(-1, 2, 1),(0, -1, -1),(0, 1, 0),$ &$\vertex(\Delta),(0,-1,1)$ & $1$ \\
& $(1, -2, 0),(1, -1, -1),(2, -1, -1)$& & \\
$533513 $ & $(-1, -1, 1),(-1, 0, 0),(-1, 1, 0),(-1, 2, -1),(0, -1, 0),$&$\vertex(\Delta),(1,1,1),$ & $1$\\
&$(0, 1, -1),(0, 3, -2),(2, -2, 1)$ & $(0,-1,-1)$ & \\
$534667 $ &$(-1, -1, 2),(-1, -1, 3),(-1, 0, 2),(-1, 1, 1),(-1, 2, 0),$& $\vertex(\Delta)$& $1$\\
& $(0, -1, 1),(0, -1, 2),(0, 0, 1),(0, 1, 0),(1, -2, -1),$& & \\
& $(1, -1, 0),(1, -1, 1),(1, 0, 0),(2, -1, -1),(2, -1, 0)$& & \\
$534669 $& $(-1, 0, 2),(-1, 1, 1),(-1, 2, -1),(-1, 2, 0),(0, 0, 1),$&$\vertex(\Delta)$& $1$\\
& $(0, 1, -1),(0, 1, 0),(1, -1, -2),(1, 0, -1),(1, 0, 0),$& & \\
& $(2, -1, -1),(2, -1, 0)$& & \\
$534866 $& $(-2, 1, 1),(-1, -1, 1),(-1, 0, 0),(-1, 1, -1),(-1, 2, -2),$& $\vertex(\Delta)$& $1$\\
& $(0, -1, 0),(0, 0, -1),(0, 1, -2),(1, -1, -1),(1, -1, 0),$& & \\
& $(1, 0, -2),(1, 0, -1),(2, -1, -2),(2, -1, -1),(2, -1, 0)$& & \\
$535952 $ & $(-2, -2, -1),(-1, -1, 0),(-1, 0, 1),(-1, 2, 2),(0, 1, 1),$& $\vertex(\Delta)$& $1$\\
& $(1, -1, -1),(1, 0, 0),(1, 2, 1),(2, 1, 0),(3, 0, -1),$& & \\
& $(3, 2, 0)$& & \\
$536013 $& $(-1, -2, 2),(-1, 2, -1),(0, -1, 0),(0, 0, -1),(0, 1, -1),$ &$\vertex(\Delta)$ & $1$\\
& $(0, 2, -1),(1, -1, 0),(1, 0, -1),(1, 1, -1),(2, -1, 0),$& & \\
& $(2, 0, -1),(3, 0, -1)$& & \\
$536498 $& $(-2, -1, -1),(-1, 0, 0),(-1, 2, -1),(0, 1, 0),(1, 1, 0),$& $\vertex(\Delta)$& $1$\\
& $(1, 2, 0),(2, 0, 1),(2, 3, 0),(3, 0, 1),(3, 1, 1),$& & \\
& $(4, 2, 1),(5, -1, 2),(5, 0, 2),(6, 1, 2),(7, -1, 3),$& & \\
& $(8, 0, 3),(10, -1, 4)$& & \\
$537834 $& $(-2, -1, -1),(-1, 0, 0),(-1, 2, -1),(-1, 3, -1),(0, 1, 0),$ & $\vertex(\Delta)$& $1$\\
& $(0, 4, -1),(1, 1, 0),(1, 2, 0),(2, 0, 1),(2, 3, 0),$& & \\
& $(3, 0, 1),(3, 1, 1),(4, 2, 1),(5, -1, 2),(5, 0, 2),$& & \\
& $(6, 1, 2),(7, -1, 3),(8, 0, 3),(10, -1, 4)$& & \\
$538356 $ & $(-2, 1, 1),(-1, -1, -1),(-1, -1, 0),(-1, -1, 1),(-1, 0, -1),$& $\vertex(\Delta),(-1,-1,-1)$&$1$\\
& $(-1, 0, 0),(-1, 1, -1),(0, -1, -1),(0, -1, 0), (0, 0, -1),$& & \\
& $(1, -1, -1),(1, -1, 0),(1, 0, -1),(2, -1, -1),(2, -1, 0),$& & \\
& $(2, 0, -1),(3, -1, -1)$& & \\
$539063 $ & $(-1, -1, 1),(-1, 0, 0),(-1, 0, 1),(-1, 0, 2),(-1, 1, 0),$ & $\vertex(\Delta) \setminus \{(0,1,0),(1,1,2)\},$& $1$\\
& $(-1, 1, 1),(-1, 2, 0),(-1, 3, -1),(0, -1, 0),(2, -1, -1)$& $(1,1,1)$& \\
$539304 $& $(-1, 0, 0),(-1, 0, 1),(-1, 0, 2),(-1, 1, 0),(-1, 1, 1),$ &$\vertex(\Delta),(-2,-1,-1)$ & $1$\\
& $(-1, 2, 0),(-1, 2, 1),(-1, 3, 0),(0, 1, -1),(0, 1, 0),$& & \\
& $(2, -2, -1)$& & \\
$539313 $ & $(-2, -1, -1),(-1, 0, 0),(-1, 2, -1),(0, 1, 0),(1, -1, 1),$ &$\vertex(\Delta),(-1,1,2)$ & $1$\\
& $(1, 1, 0),(1, 2, 0),(2, 0, 1),(2, 3, 0),(3, 0, 1), $ & & \\
& $(3, 1, 1),(4, 2, 1),(5, 0, 2),(6, 1, 2)$ & & \\
$540602 $ & $(-2, -1, -1),(-1, 0, 0),(-1, 2, -1),(-1, 3, -1),(0, 1, 0),$&$\vertex(\Delta),(-1,1,2)$ & $1$\\
& $(0, 4, -1),(1, -1, 1),(1, 1, 0),(1, 2, 0),(2, 0, 1),$ & & \\
& $(2, 3, 0),(3, 0, 1),(3, 1, 1),(4, 2, 1),(5, 0, 2),$ & & \\
& $(6, 1, 2)$ & & \\
$540663 $ & $(-1, -1, 1),(-1, -1, 2),(-1, 0, 0),(-1, 0, 1),(-1, 0, 2),$&$\vertex(\Delta),(-1,0,-1)$ & $1$\\
& $(-1, 1, 0),(-1, 1, 1),(-1, 2, -1),(-1, 2, 0),(-1, 2, 1),$ & & \\
& $(0, -1, 0),(0, -1, 1),(2, -2, -1)$ & & \\ \midrule
$474457 $& $(-2, 1, 2),(-1, -1, 0),(-1, 0, 0),(-1, 1, 0),(-1, 2, 0),$ & $\vertex(\Delta)$& $1$\\
& $(1, -1, -1),(1, 0, -1),(2, -1, -1)$ & & \\
$481575 $ &$(-1, -1, 1),(-1, 0, 1),(-1, 1, 0),(-1, 2, 0),(-1, 3, -1),$ &$\vertex(\Delta),(-1,-1,-1)$ & $1$\\
& $(0, -1, 1),(0, 1, 0),(2, -2, -1)$ & &
 \end{tabular}
 }
 }
 \end{table}
 \vfill
 \hspace{0pt}
}

\newpage

\hspace{0pt}
\vfill
 {
 \begin{table}[h!]
 \ContinuedFloat
 \centering
 \caption[]
 {}
 \rotatebox{0}{%
 \scalebox{0.85}{
 \begin{tabular}{l|l|l|c}
 \multicolumn{1}{c|}{ID} & \multicolumn{1}{c|}{$\supp(\Delta^{\FI})$} & \multicolumn{1}{c|}{$\vertex(\Delta^{\can} )$} & $\left\vert \pi_1(\mathcal S_\Delta) \right\vert$\\ \toprule \midrule
$483109 $&$(-1, -1, 1),(0, -1, 0),(0, -1, 1),(0, 2, -1),(1, -1, -1),$ & $\vertex(\Delta)$& $1$\\
& $(1, -1, 0),(1, -1, 1),(1, 0, -2),(1, 0, -1),(1, 0, 0),$ & & \\
& $(1, 0, 1)$ & & \\
$490478 $&(-2, -1, -1),(-1, 0, 0),(-1, 2, -1),(0, 1, 0),(1, -1, 1),&$\vertex(\Delta)$& $1$\\
& $(1, 1, 0),(1, 2, 0),(2, 0, 1),(3, 0, 1),(3, 1, 1), $ & & \\
& $(5, 0, 2)$ & & \\
$490481 $ &$(-1, -1, 2),(-1, -1, 3),(-1, 0, 1),(-1, 0, 2),(-1, 1, 0),$ &$\vertex(\Delta)$& $1$\\
& $(-1, 1, 1),(-1, 2, -1),(-1, 2, 0),(0, -1, 0),(0, -1, 1),$ & & \\
& $(0, -1, 2),(2, -2, -1)$ & & \\
$490485 $& $(-1, 0, 0),(-1, 0, 1),(-1, 0, 2),(-1, 1, 0),(-1, 1, 1),$&$\vertex(\Delta)$& $1$\\
& $(-1, 2, 0),(-1, 2, 1),(0, 1, -1),(0, 1, 0),(2, -2, -1)$ & & \\
$490511 $ &$(-1, -1, 2),(-1, 0, 1),(-1, 1, 0),(-1, 1, 1),(-1, 2, 0),$& $\vertex(\Delta),(2,1,2)$& $1$\\
& $(-1, 3, 0),(0, -1, 0),(0, 1, -1),(2, -2, -1)$ & & \\
$495687 $&$(-2, -1, -1),(-1, 0, 0),(-1, 2, -1),(-1, 3, -1),(0, 1, 0),$&$\vertex(\Delta)$ & $1$\\
& $(0, 4, -1),(1, -1, 1),(1, 1, 0),(1, 2, 0),(2, 0, 1),$ & & \\
& $(2, 3, 0),(3, 0, 1),(3, 1, 1),(4, 2, 1)$ & & \\
$499287 $ &$(-2, 1, 1),(-1, -1, 1),(-1, 0, 0),(-1, 1, -1),(0, -1, 0),$&$\vertex(\Delta),(-1,-1,-1)$ & $1$\\
& $(0, 0, -1),(1, -1, -1),(1, -1, 0),(1, 0, -1),(2, -1, -1),$ & & \\
& $(2, -1, 0),(2, 0, -1),(3, -1, -1)$ & & \\
$499291 $ & $(-2, 1, 1),(-1, -1, -1),(-1, -1, 0),(-1, -1, 1),(-1, 0, -1),$&$\vertex(\Delta)$ &$1$\\
& $(-1, 0, 0),(-1, 1, -1),(0, -1, -1),(0, -1, 0), (0, 0, -1),$ & & \\
& $(1, -1, -1),(1, -1, 0),(1, 0, -1),(2, -1, -1),(2, -1, 0),$ & & \\
& $(2, 0, -1),(3, -1, -1)$ & & \\
$499470 $ &$(-1, -1, 2),(-1, 0, 1),(-1, 1, 0),(-1, 1, 1),(-1, 2, -1),$& $\vertex(\Delta)$& $1$\\
& $(-1, 2, 0),(-1, 3, -1),(-1, 3, 0),(0, -1, 0),(0, 1, -1),$ & & \\
& $(2, -2, -1)$ & & \\
$501298 $& $(-2, -2, -1),(-1, -1, 0),(-1, 0, 1),(-1, 2, 2),(0, -1, 0),$&$\vertex(\Delta)$& $1$\\
& $(0, 1, 1),(1, -1, -1),(1, 0, 0),(1, 2, 1),(2, -1, -1),$ & & \\
& $(2, 1, 0),(3, -1, -2),(3, 0, -1),(4, -1, -2),(5, 0, -2),$ & & \\
& $(6, -1, -3)$ & & \\
$501330 $&$(-1, -1, 1),(-1, -1, 2),(-1, 0, 0),(-1, 0, 1),(-1, 1, 0),$& $\vertex(\Delta)$& $1$\\
& $(-1, 1, 1),(-1, 2, -1),(-1, 2, 0),(-1, 3, -1),(-1, 3, 0),$ & & \\
& $(0, -1, 0),(0, 1, -1),(2, -2, -1)$ & & \\\midrule
$354912 $ & $(-1, -1, 2),(-1, 0, 1),(-1, 1, 1),(-1, 2, 0),(-1, 3, 0),$& $\vertex(\Delta)$& $1$\\
& $(0, -1, 0),(0, 1, -1),(2, -2, -1)$ & & \\
$372528 $& $(-1, 0, 1),(-1, 0, 2),(-1, 1, 0),(-1, 1, 1),(-1, 2, -1),$ & $\vertex(\Delta)$& $1$\\
& $(-1, 2, 0),(0, -1, 0),(0, -1, 1),(0, -1, 2),(2, -2, -1)$ & & \\
$372973 $ & $(-1, -1, 2),(-1, 0, 1),(-1, 1, 0),(-1, 1, 1),(-1, 2, 0),$&$\vertex(\Delta)$ & $1$\\
& $(-1, 3, 0),(0, -1, 0),(0, 1, -1),(2, -2, -1)$ & & \\
$388701 $& $(-2, 1, 1),(-1, -1, 1),(-1, 0, 0),(-1, 1, -1),(0, -1, 0),$&$\vertex(\Delta)$ & $1$\\
& $(0, 0, -1),(1, -1, -1),(1, -1, 0),(1, 0, -1),(2, -1, -1),$ & & \\
& $(2, -1, 0),(2, 0, -1),(3, -1, -1)$ & &
 \end{tabular}
 }
 }
 \end{table}
 \vfill
 \hspace{0pt}
 }
\newpage

\section{Hollow $3$-topes with Non-empty Fine
 Interior} \label{appendixB}

\renewcommand{\thetable}{\Alph{section}.\arabic{table}}
\setcounter{table}{0}

\renewcommand{\thefigure}{\Alph{section}.\arabic{figure}}
\setcounter{figure}{0}

A lattice polytope $\Delta \subseteq \MQ$ is called {\em hollow} if it has no interior
lattice points in its relative interior, \emph{i.e.}, $\Delta^\circ \cap M = \emptyset$.
By a Theorem 1.3 in~\cite{Tre08}, any
$3$-dimensional hollow lattice polytope
can be projected to the unimodular
$1$-simplex,
to the double unimodular
$2$-simplex, or is an exceptional hollow $3$-tope,
whereas up to unimodular transformation there exist only a
finite number of these. This theorem implies that a hollow
$3$-tope with non-empty Fine interior has to be exceptional because the
unimodular $1$-simplex and the double unimodular $2$-simplex have empty
Fine interior.
Treutlein
has found $9$ maximal exceptional hollow polytopes,
which was not an complete
list. Averkov et al.~\cite{AWW11,AKW17} have found the complete list consisting of $12$
maximal exceptional hollow $3$-topes $\Delta_i$ $(1 \leq i \leq 12)$ (Table \ref{tbl:hollow1}, Figure \ref{fig:h}).
Computations show that exactly $9$ of $12$
maximal exceptional hollow $3$-topes $\Delta_i$ have
non-empty Fine interior $\Delta_i^{\FI}$ (Table \ref{tbl:hollow1}). Moreover, no one of these $9$ polytopes
contains a proper lattice $3$-subpolytope with non-empty Fine interior.
Thus, there exist exactly $9$ hollow $3$-topes $\Delta_i$ with non-empty
Fine interior $\Delta_i^{\FI}$.

 \begin{table}[h!]
 \centering
  \rotatebox{0}{%
 \scalebox{0.9}{
 \begin{tabular}{c|l|c|c|l|c}
 \multicolumn{1}{c|}{$i$} &
 \multicolumn{1}{c|}{$\vertex(\Delta_i)$} &
 \multicolumn{1}{c|}{$\text{w}(\Delta_i)$} &
 \multicolumn{1}{c|}{$\dim(\Delta_i^{\FI})$} &
 \multicolumn{1}{c|}{$\vertex(\Delta_i^{\FI})$} & \multicolumn{1}{c}{ $\left\vert \pi_1(\mathcal S_\Delta) \right\vert$} \\ \toprule \midrule
$1$ & $(0,0,0),(6,0,0),(3,3,0),(4,0,2)$ &$2$& $-1$ & $\emptyset$& $1$ \\
$2$ & $(0,0,0),(4,0,0),(0,4,0),(2,0,2)$&$2$ &$-1$ &$\emptyset$ & $1$ \\
$3$ & $(0,0,0),(3,0,0),(0,3,0),(3,0,3)$& $3$& $-1$ &$\emptyset$ &$1$ \\ \midrule
$4$ &$ (0,0,0),(4,0,0),(2,4,0),(3,0,2)$&$2$& $0$ & $1/2 \cdot (5, 1, 2)$& $2$\\
$5$ &$(0,0,0),(2,2,0),(1,1,2),(3,-1,2) $&$2$& $0$ & $ 1/2 \cdot (3, 1, 2)$&$2$ \\
$6$ &$(0,0,0),(2,2,0),(4,0,0),(2,-2,0),$ &$2$& $0$ & $1/2 \cdot (5, 1, 2)$&$2$ \\
& $(3,1,2)$&& & & \\
$7$ & $(0,0,0),(1,1,0),(2,-2,0),(3,-1,0),$ &$2$& $0$ & $ 1/2 \cdot (3,- 1, 2) $&$2$ \\
& $(1,-1,2),(2,0,2)$& & & & \\
$8$ &$(0,0,0),(1,1,0),(1,-1,0),(2,0,0),$ &$2$& $0$ & $ 1/2 \cdot (3,- 1, 2)$&$2$ \\
& $(1,-1,2),(2,0,2),(2,-2,2),(3,-1,2)$ && & & \\ \midrule
$9$ &$(0,0,0),(3,0,0),(1,3,0),(2,0,3)$ &$3$& $1$& $ (4/3, 1, 1), (5/3, 1, 1) $&$3$ \\
$10$ & $(0,0,0),(1,2,0),(1,-1,0),(3,0,0),$ &$3$& $1$& $ (4/3, 2/3, 1), (5/3, 1/3, 1 ) $& $3$\\
& $(2,1,3)$& & & & \\
$11$ &$(0,0,0),(1,1,0),(3,0,0),(2,-1,0),$ &$3$& $1$& $ (5/3, 2/3, 1),(7/3, 1/3, 1) $&$3$ \\
& $(4,1,3),(2,2,3)$ && & & \\ \midrule
$12$ &$(-1,0,0),(0,1,-2),(1,2,1),(2,-2,-1)$ &$3$& $3$ & $ (1/5, 1/5, -2/5),
 (2/5, 2/5, -4/5), $ & $5$ \\
 & & && $ (3/5, 3/5, -1/5), (4/5, -1/5, -3/5) $&
 \end{tabular}
 }
 }
 \vspace{1em}
 \caption[]
 {{\bf {\boldmath $12$} Maximal Hollow {\boldmath $3$}-topes.}
 Table contains:
 index $i$ of the maximal hollow $3$-tope $\Delta_i$,
vertices $\vertex(\Delta_i)$ of $\Delta_i$, lattice width $\text{w}(\Delta_i)$\footnotemark
of $\Delta_i$,
dimension $\dim(\Delta_i^{\FI})$ of Fine interior $\Delta_i^{\FI}$,
vertices $\vertex(\Delta_i^{\FI})$ of $\Delta_i^{\FI}$, and order of fundamental group
$\left\vert \pi_1(\mathcal S_\Delta) \right\vert$ of the minimal model $\mathcal S_{\Delta_{i}}$.}
 \label{tbl:hollow1}
 \end{table}

\footnotetext{The {\em lattice width} $\text{w}(\Delta)$ of a lattice polytope $\Delta$ is defined
as the infimum of $\max_{ x \in \Delta} \langle x, u \rangle - \min_{ x \in \Delta} \langle x, u \rangle $
 for all non-zero integer lattice directions $u$.}

It is remarkable that all minimal surfaces
$\mathcal S_{\Delta_i}$ corresponding to these $9$ hollow $3$-topes $\Delta_i$
have non-trivial fundamental group $\pi_1(\mathcal S_\Delta)$ of order $2$, $3$, or $5$ (Table \ref{tbl:hollow1}).
There exist exactly $5$ hollow $3$-topes $\Delta_i$ with $0$-dimensional
Fine interior $\Delta_i^{\FI} = \{R\}$, where $R \in \frac{1}{2}M \setminus M$ is a
rational point (Table \ref{tbl:hollow1}).
The normal fans $\Sigma^{\Delta_i}$ of these $5$ hollow polytopes $\Delta_i$
define $5$ toric Fano threefolds $X_{\Sigma^{\Delta_i}}$
with at worst canonical singularities (Table \ref{tbl:hollow3}). 
These Fano threefolds can be
obtained as quotients of
Gorenstein toric Fano threefolds $X_{\Sigma_{\Delta''}}$ in the following $5$ ways:

\begin{itemize} \label{quotient}
\item[\mylabel{itm:i}{(i)}] $\PP(1,1,2,4)$ with a $\mu_2$-action  given by 
$(x_0,x_1,x_2,x_3) \mapsto (x_0,-x_1,-x_2,-x_3)$;
\item[\mylabel{itm:ii}{(ii)}] $\PP^3$ with a $\mu_4$-action  given by 
$(x_0,x_1,x_2,x_3) \mapsto (x_0,ix_1,-x_2,-ix_3)$;
\item[\mylabel{itm:iii}{(iii)}] $\{ x_1x_2 - x_3x_4 =0 \} \subseteq \PP(2,1,1,1,1)$ 
with a $\mu_2$-action  given by 
% $(x_0,x_1,x_2,x_3,x_4) \mapsto (-x_0,-x_1,-x_2,x_3,x_4)$;
  $(x_i)_{0 \leq i \leq 4} \mapsto (-x_0,-x_1,-x_2,x_3,x_4)$;
\item[\mylabel{itm:iv}{(iv)}] $\PP^1 \times \PP(1,1,2)$ with a $\mu_2$-action  given by 
$(x_0,x_1,y_0,y_1,y_2) \mapsto (x_0,-x_1,y_0,-y_1,-y_2)$;
\item[\mylabel{itm:v}{(v)}] $\PP^1 \times \PP^1 \times \PP^1$ with a $\mu_2$-action  given by 
 $(x_0,x_1,y_0,y_1, z_0,z_1) \mapsto (x_0,-x_1, y_0,-y_1, z_0, -z_1)$.
\end{itemize}

 \begin{table}[h!]
 \centering
 \rotatebox{0}{%
 \scalebox{1}{
 \begin{tabular}{c|l|l|l|l|c}
 \multicolumn{1}{c|}{$i$} &
 \multicolumn{1}{c|}{$\Sigma^{\Delta_i}(1)$} &
 \multicolumn{1}{c|}{ID($\Delta'$)} &
 \multicolumn{1}{c|}{$\Sigma_{\Delta'}(1)$} &
 \multicolumn{1}{c|}{ID($\Delta''$)} &
 \multicolumn{1}{c}{$X_{\Sigma_{\Delta''}}$} \\ \toprule \midrule
$4$ & $(2, -1, -3),(0,0,1),(0,1,0),$& $547354$& $(-2, -3, -5), (2, 1, 1), (0, 1, 0),$ &$547363$ &(i)\\
& $(-2,-1,-1)$ & &$(0, 0, 1)$&&\\
$5$ & $(1,-1,0),(1,1,-1),(-1,1,2),$ & $547364$ &$ (0, 0, 1),(0, 2, 1),(2, 1, 0),$ & $547367$ &(ii) \\
& $(-1,-1,-1)$ & &$ (-2, -3, -2)$&&\\
$6$ & $(0,0,1),(1,1,-2),(1,-1,-1),$ & $544353$ &$ (1, 0, 0),(0, 1, 0),(-2, -1, 0), $ & $544357$ &(iii) \\
&$(-1,-1,0),(-1,1,-1)$ & &$(1, 1, 2),(-3, -1, -2)$&&\\
$7$ & $(0,0,1),(-1,1,-1),(1,-1,-1),$ & $544310$ &$(-1, -1, -2),(1, 1, 2),(-2, -1, 0),$& $544342$ &(iv)\\
&$(1,1,0),(-1,-1,0)$ & &$(0, 1, 0),(1, 0, 0)$&&\\
$8$ & $(-1,1,1),(0,0,-1),(-1,-1,0),$ & $520134$&$ (1, 0, 0),(0, 1, 0),(1, 1, 2), $& $520140$ &(v)\\
&$(1,1,0),(1,-1,-1),(0,0,1)$ & &$(-1, 0, 0), (0, -1, 0),(-1, -1, -2)$&&
 \end{tabular}
 }
 }
 \vspace{1em}
  \caption[]
 {{\bf {\boldmath $5$} Hollow {\boldmath $3$}-topes with {\boldmath$0$}-dimensional Fine
 Interior.}
 Table contains: index $i$ of the maximal hollow $3$-tope $\Delta_i$,
 rays of the normal fan $\Sigma^{\Delta_i}$ corresponding to
 $\Delta_i$,
 \href{http://www.grdb.co.uk/forms/toricf3c}{ID}\footnotemark
 of the canonical Fano $3$-tope
 $\Delta'$ such that $\Sigma^{\Delta_i} \cong \Sigma_{\Delta'}$,
 rays of the spanning fan $\Sigma_{\Delta'}$,
 \href{http://www.grdb.co.uk/forms/toricf3c}{ID}
 of the reflexive canonical Fano $3$-tope
 $\Delta''$ used to construct the Gorenstein toric Fano threefold $X_{\Sigma_{\Delta''}}$ and obtain the toric Fano threefold $X_{\Sigma_{\Delta'}}$
 with at worst canonical singularities as a $\mu_2$ quotient, and reference
 to the corresponding Gorenstein toric Fano threefold
$X_{\Sigma_{\Delta''}}$ including the precise $\mu_2$
action on page \pageref{quotient}.
 }
 \label{tbl:hollow3}
 \end{table}

 In addition, Table \ref{tbl:hollow2} contains the support $\supp(\Delta_i^{\FI})$ of the Fine
 interior $\Delta_i^{\FI}$ and the vertices of the canonical hull $\Delta_i^{\can}$ for all
 $9$ hollow polytopes $\Delta_i$ with non-empty Fine interior $\Delta_i^{\FI}$.

 \begin{table}[h!]
 \centering
 \rotatebox{0}{%
 \scalebox{1}{
 \begin{tabular}{c|l|c}
 \multicolumn{1}{c|}{$i$} &
 \multicolumn{1}{c|}{$\supp(\Delta_i^{\FI})$} &
 \multicolumn{1}{c}{$\vertex(\Delta_i^{\can})$} \\ \toprule \midrule
$4$ & $ (-2, -1, -1),
 (0, -1, -2),
 (2, -1, -3),
 (0, 0, 1),
 (0, 0, -1),
 (0, 1, 0)$& $\vertex(\Delta_i)$ \\
$5$ & $ (1, -1, 0),
 (1, 1, -1),
 (0, 0, 1),
 (0, 0, -1),
 (-1, -1, -1),
 (-1, 1, 2)$& $\vertex(\Delta_i)$ \\
$6$ & $ (1, 1, -2),
 (1, -1, -1),
 (-1, -1, 0),
 (-1, 1, -1),
 (0, 0, 1),
 (0, 0, -1)$& $\vertex(\Delta_i)$ \\
$7$ & $(1, 1, 0),
 (1, -1, -1),
 (-1, -1, 0),
 (-1, 1, -1),
 (0, 0, 1),
 (0, 0, -1)$& $\vertex(\Delta_i)$\\
$8$ & $(1, 1, 0),
 (1, -1, -1),
 (-1, -1, 0),
 (0, 0, 1),
 (0, 0, -1),
 (-1, 1, 1)$& $\vertex(\Delta_i)$ \\ \midrule
$9$ & $(0, -1, -1),
 (0, 0, 1),
 (3, -1, -2),
 (0, 1, 0),
 (-3, -2, -1)$& $\vertex(\Delta_i)$ \\
$10$ & $ (-1, 2, -1),
 (1, 1, -1),
 (-1, -1, 0),
 (2, -1, -1),
 (0, 0, 1)$ & $\vertex(\Delta_i)$\\
$11$ & $ (1, -1, 0),
 (0, 0, 1),
 (-1, -2, 1),
 (-1, 1, 0),
 (1, 2, -2)$& $\vertex(\Delta_i)$ \\ \midrule
$12$ & $(1, 1, 1),
 (1, -1, 0),
 (-2, -1, 1),
 (0, 1, -2)$& $\vertex(\Delta_i)$
 \end{tabular}
 }
 }
 \vspace{1em}
  \caption[]
 {{\bf {\boldmath $9$} Hollow {\boldmath $3$}-topes with Non-empty Fine Interior.}
 Table contains:
 index $i$ of the maximal hollow $3$-tope $\Delta_i$,
support $\supp(\Delta_i^{\FI})$ of $\Delta_i^{\FI}$, and
vertices of the canonical hull $\Delta_i^{\can}$.}
 \label{tbl:hollow2}
 \end{table}

 \footnotetext{ID used in the \href{http://www.grdb.co.uk}{Graded Ring Database}. \label{foot1}}
 
 \newpage

 \begin{figure}
	\centering
	\begin{subfigure}{6.5cm}
	\fbox{
	\begin{minipage}[c][6cm]{6cm}
		\centering \includegraphics[width=6cm]{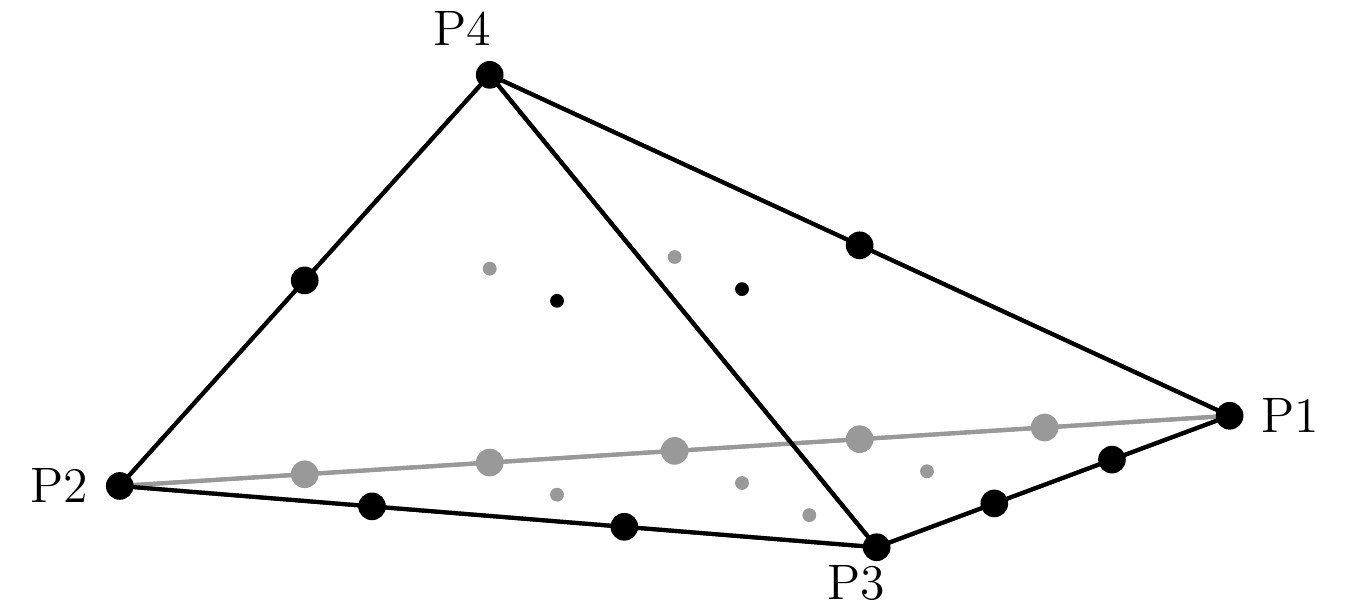}
	\end{minipage}
	}
	\caption{$\Delta_{1}$.}
	\label{fig:h1}
	\end{subfigure}
\hspace{0.5cm}
	\begin{subfigure}{6.5cm}
	\fbox{
	\begin{minipage}[c][6cm]{6cm}
	\centering \includegraphics[width=6cm]{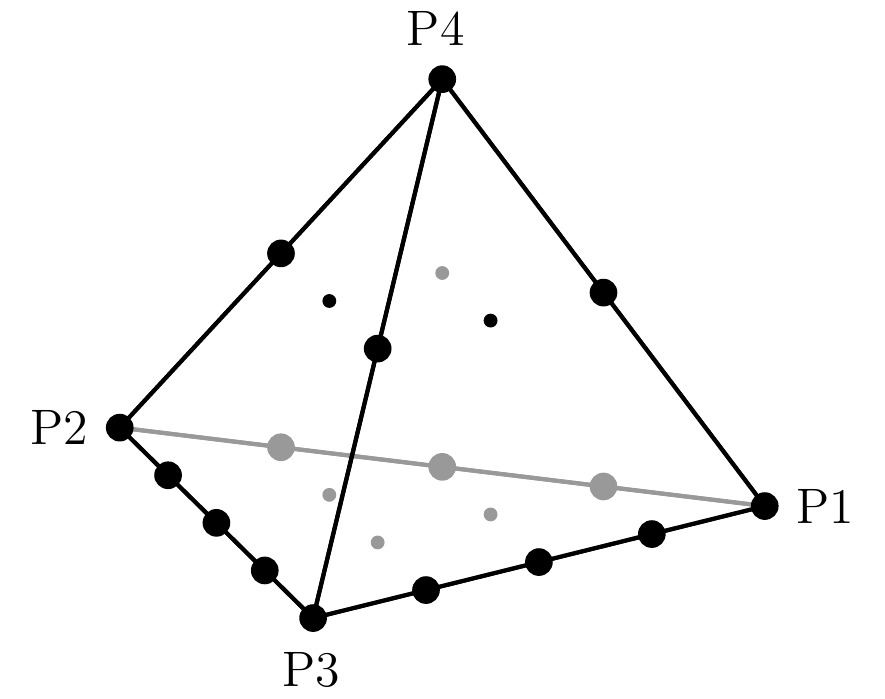}
	\end{minipage}
	}
	\caption{$\Delta_{2}$.}
	\label{fig:h2}
	\end{subfigure}
	\vspace{0.5cm}\\
		\begin{subfigure}{6.5cm}
	\fbox{
	\begin{minipage}[c][6cm]{6cm}
		\centering \includegraphics[width=6cm]{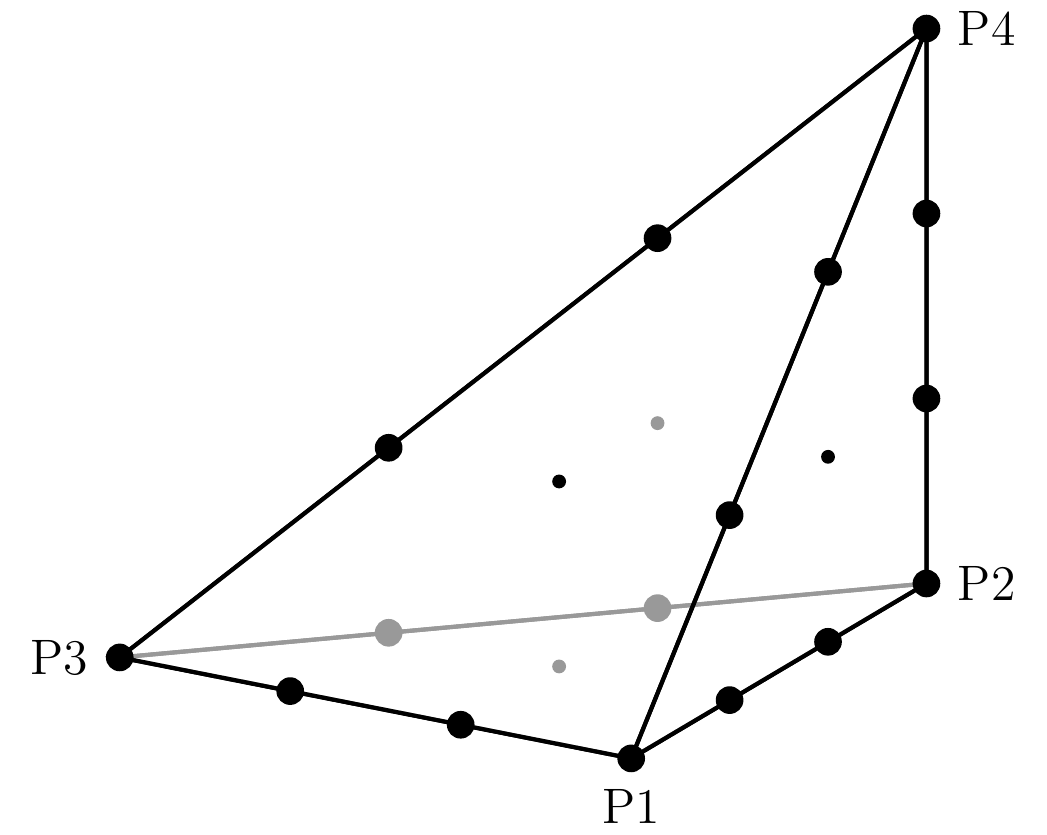}
	\end{minipage}
	}
	\caption{$\Delta_{3}$.}
	\label{fig:h3}
	\end{subfigure}
\hspace{0.5cm}
	\begin{subfigure}{6.5cm}
	\fbox{
	\begin{minipage}[c][6cm]{6cm}
	\centering \includegraphics[width=6cm]{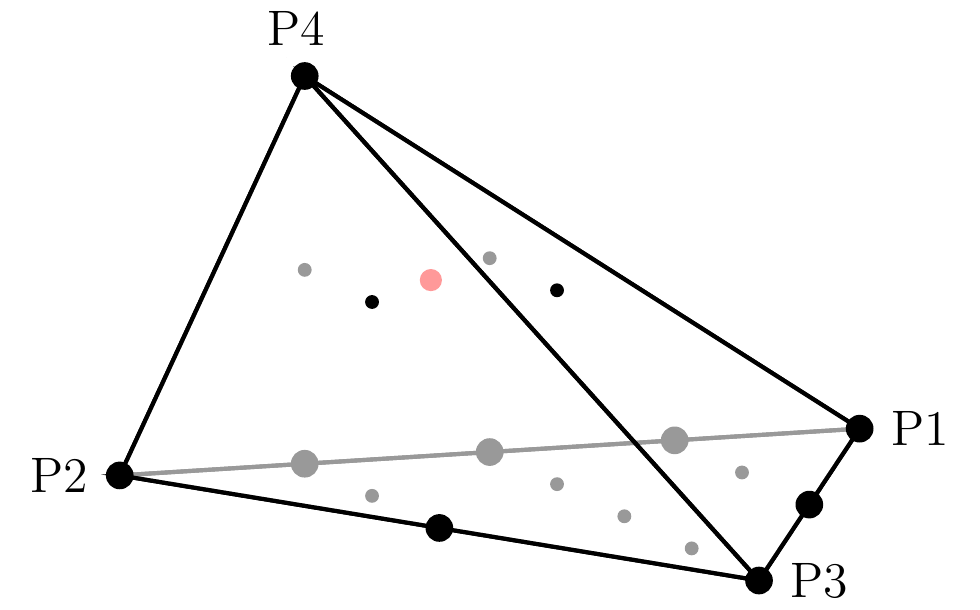}
	\end{minipage}
	}
	\caption{$\Delta_{4}$.}
	\label{fig:h4}
	\end{subfigure}
	\vspace{0.5cm}\\
		\begin{subfigure}{6.5cm}
	\fbox{
	\begin{minipage}[c][6cm]{6cm}
		\centering \includegraphics[width=6cm]{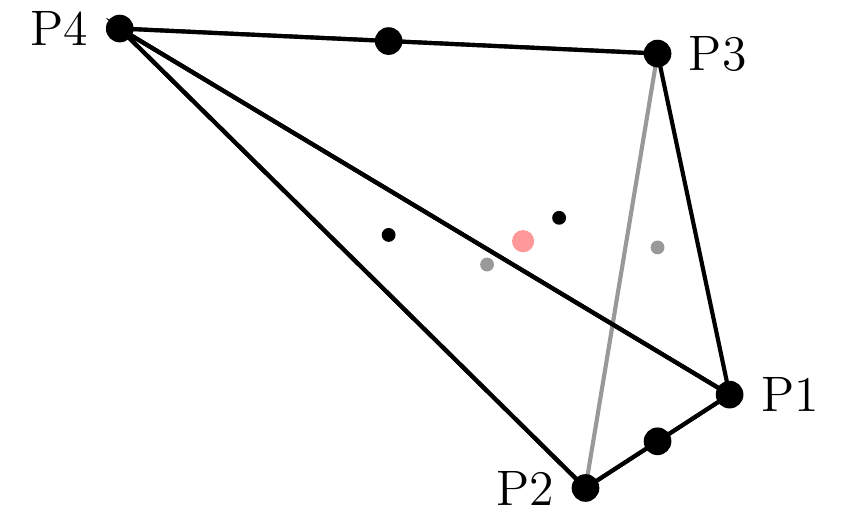}
	\end{minipage}
	}
	\caption{$\Delta_{5}$.}
	\label{fig:h5}
	\end{subfigure}
\hspace{0.5cm}
	\begin{subfigure}{6.5cm}
	\fbox{
	\begin{minipage}[c][6cm]{6cm}
	\centering \includegraphics[width=6cm]{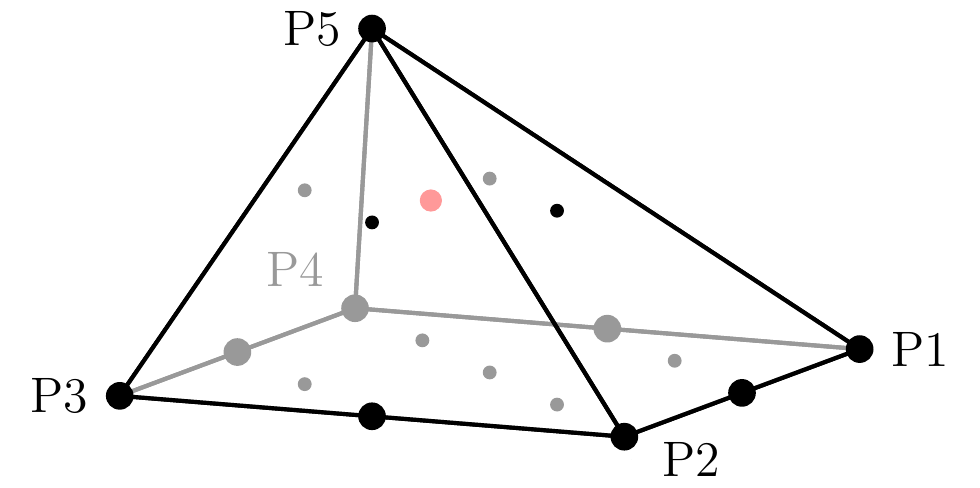}
	\end{minipage}
	}
	\caption{$\Delta_{6}$.}
	\label{fig:h6}
	\end{subfigure}
	\\
	\caption[]
	{ {\bf {\boldmath $12$} Maximal Hollow {\boldmath $3$}-topes.}
	Shaded faces are occulded. The Fine interior is coloured red.}
	\label{fig:h}
\end{figure}
\phantom{.}

\newpage

 \begin{figure}
 \ContinuedFloat
	\centering
	\begin{subfigure}{6.5cm}
	\fbox{
	\begin{minipage}[c][6cm]{6cm}
		\centering \includegraphics[width=6cm]{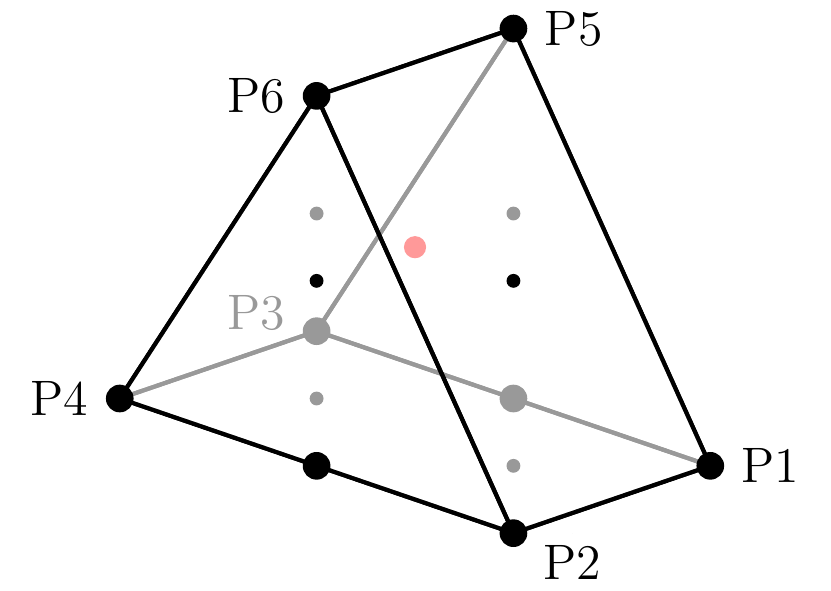}
	\end{minipage}
	}
	\caption{$\Delta_{7}$.}
	\label{fig:h7}
	\end{subfigure}
\hspace{0.5cm}
	\begin{subfigure}{6.5cm}
	\fbox{
	\begin{minipage}[c][6cm]{6cm}
	\centering \includegraphics[width=6cm]{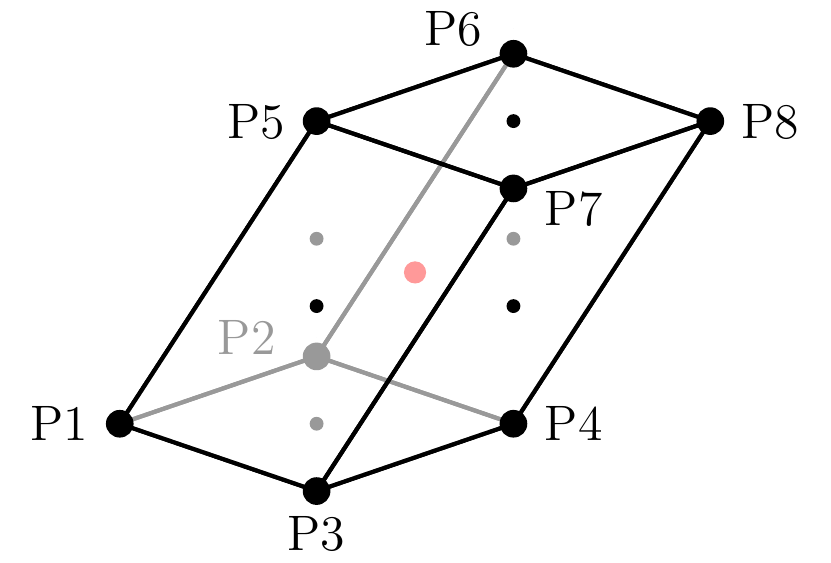}
	\end{minipage}
	}
	\caption{$\Delta_{8}$.}
	\label{fig:h8}
	\end{subfigure}
	\vspace{0.5cm}\\
		\begin{subfigure}{6.5cm}
	\fbox{
	\begin{minipage}[c][6cm]{6cm}
		\centering \includegraphics[width=6cm]{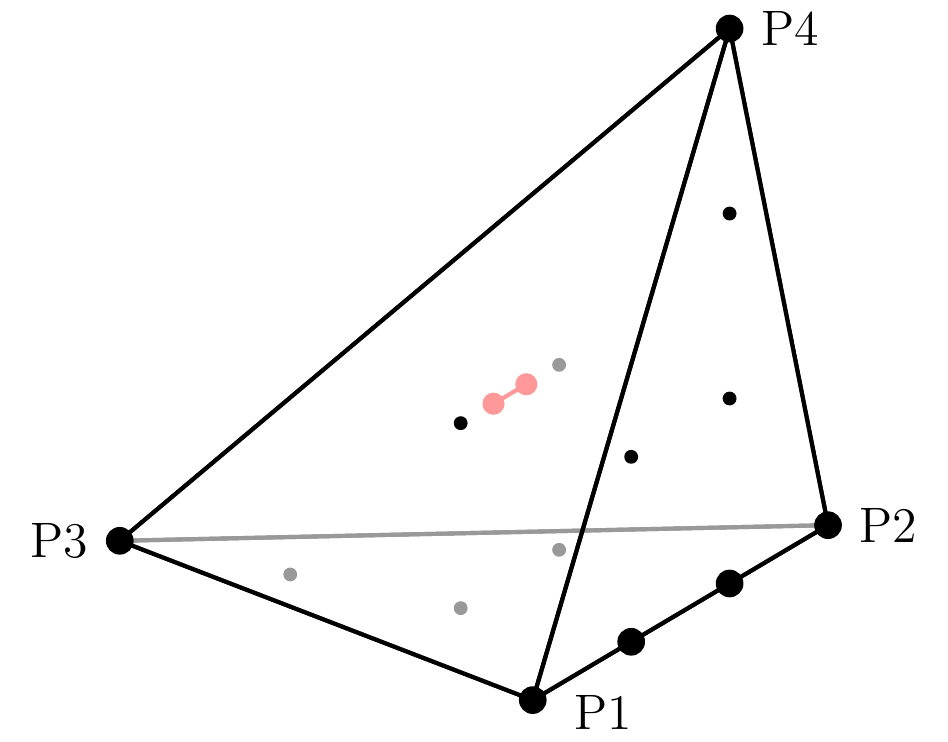}
	\end{minipage}
	}
	\caption{$\Delta_{9}$.}
	\label{fig:h9}
	\end{subfigure}
\hspace{0.5cm}
	\begin{subfigure}{6.5cm}
	\fbox{
	\begin{minipage}[c][6cm]{6cm}
	\centering \includegraphics[width=6cm]{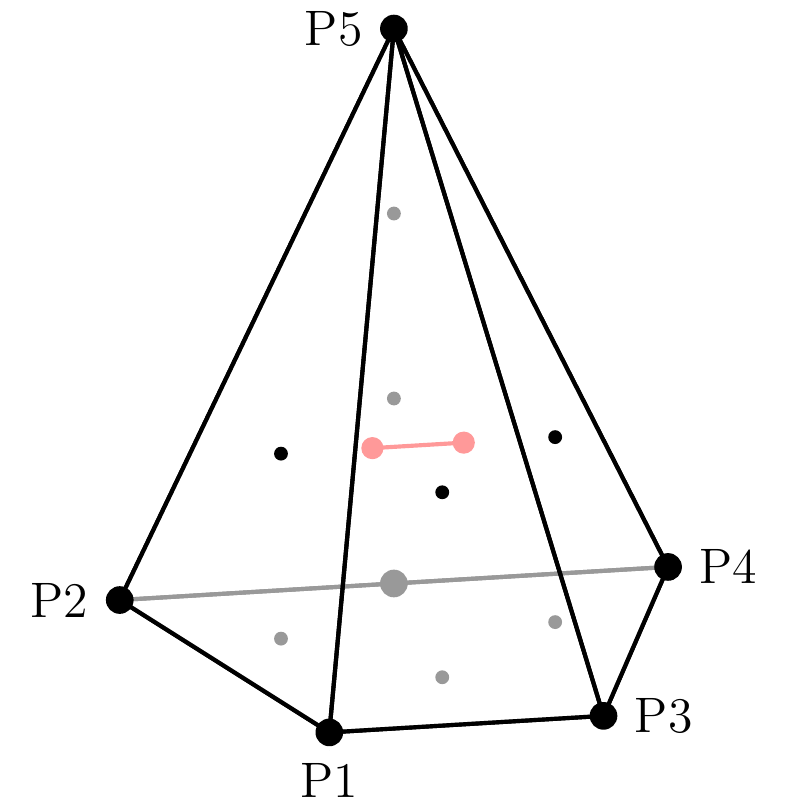}
	\end{minipage}
	}
	\caption{$\Delta_{10}$.}
	\label{fig:h10}
	\end{subfigure}
	\vspace{0.5cm}\\
		\begin{subfigure}{6.5cm}
	\fbox{
	\begin{minipage}[c][6cm]{6cm}
		\centering \includegraphics[width=6cm]{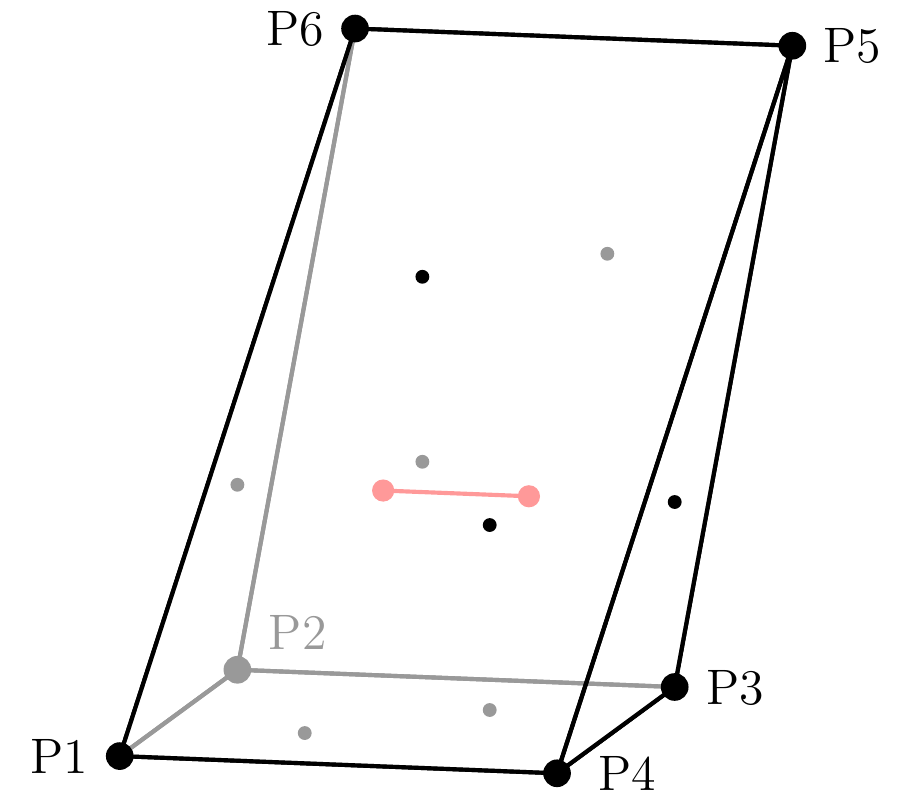}
	\end{minipage}
	}
	\caption{$\Delta_{11}$.}
	\label{fig:h11}
	\end{subfigure}
\hspace{0.5cm}
	\begin{subfigure}{6.5cm}
	\fbox{
	\begin{minipage}[c][6cm]{6cm}
	\centering \includegraphics[width=6cm]{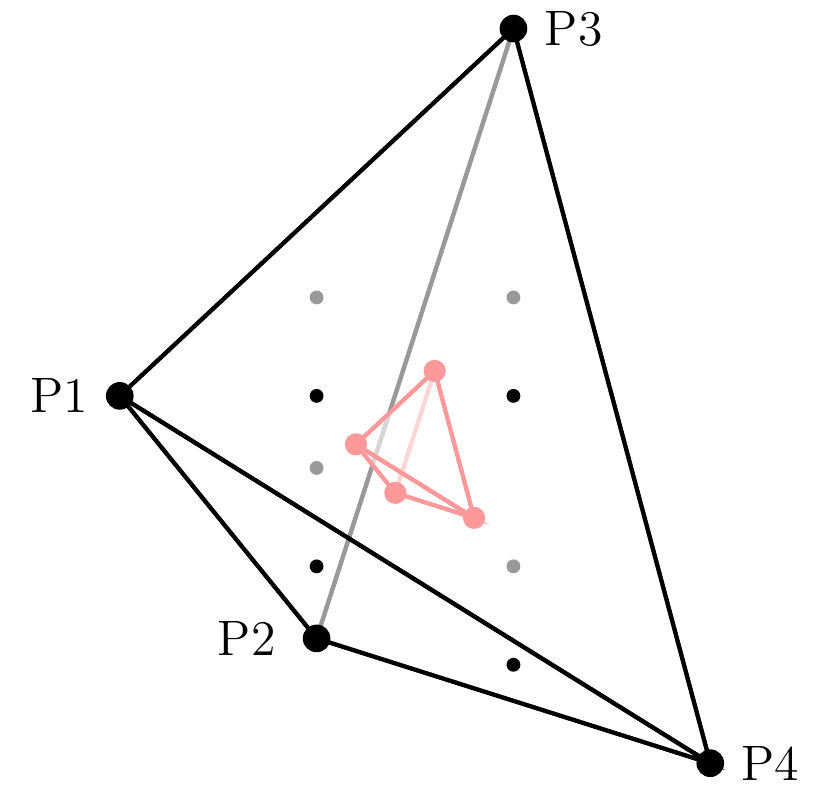}
	\end{minipage}
	}
	\caption{$\Delta_{12}$.}
	\label{fig:h12}
	\end{subfigure}
	\\
	\caption[]
	{ }
\end{figure}
\phantom{.}

\newpage

%-------------------------------------------------------------------------------
\bibliographystyle{amsalpha}

%-------------------------------------------------------------------------------
\end{document}